\documentclass[12pt]{article}
\usepackage[centertags]{amsmath}
\usepackage{amsfonts}
\usepackage{amssymb}
\usepackage{latexsym}
\usepackage{amsthm}
\usepackage{newlfont}
\usepackage{graphicx}
\usepackage{listings}
\usepackage{booktabs}
\usepackage{abstract}
\lstset{numbers=none,language=MATLAB}
\setcounter{page}{1}
\date{}

\bibliographystyle{amsplain}

\newlength{\defbaselineskip}
\setlength{\defbaselineskip}{\baselineskip}
\newcommand{\setlinespacing}[1]%
           {\setlength{\baselineskip}{#1 \defbaselineskip}}

\newcommand{\actaqed}{\hfill $\actabox$}
{\medskip\noindent \textit{Proof of #1. }}%
{\actaqed \medskip}

\def\D{{\mathcal D}}
\def\A{{\mathcal A}}
\def\H{{\mathcal H}}
\def \Tr{\mathcal T}

\def \T{\mathbb T}
\def \<{\langle}
\def\>{\rangle}

\def \e{\epsilon}
\def \de{\delta}

\def\la{\lambda}

\def \sp{\operatorname{span}}

\def\bx{\mathbf x}

\def\bk{\mathbf k}

\def\bW{\mathbf W}

\def\Th{{\Theta}}
\def\U{{\mathcal U}}
\def\bt{\beta}

\newtheorem{Theorem}{Theorem}[section]
\newtheorem{Lemma}{Lemma}[section]
\newtheorem{Definition}{Definition}[section]
\newtheorem{Proposition}{Proposition}[section]

\newtheorem{Corollary}{Corollary}[section]
\numberwithin{equation}{section}

\newcommand{\be}{\begin{equation}}
\newcommand{\ee}{\end{equation}}

\begin{document}

\title{{Sparse approximation by greedy algorithms} }
\author{V. Temlyakov \thanks{ University of South Carolina and Steklov Institute of Mathematics. Research was supported by NSF grant DMS-1160841 }}
\maketitle
\begin{abstract}
{It is a survey on recent results in constructive sparse approximation. Three directions are discussed here: (1) Lebesgue-type inequalities for greedy algorithms with respect to a special class of dictionaries, (2) constructive sparse approximation with respect to the trigonometric system, (3) sparse approximation with respect to dictionaries with tensor product structure. In all three cases constructive ways are provided for sparse approximation. The technique used is based on fundamental results from the theory of greedy approximation. In particular, results in the direction (1) are based on deep methods developed recently in compressed sensing. We present some of these results with detailed proofs. }
\end{abstract}

\section{Introduction}  

The paper is a survey on recent breakthrough results in constructive sparse approximation. In all cases discussed here the new technique is based on greedy approximation. The main motivation for the study of sparse approximation is that many real world signals can be well approximated by sparse ones.   Sparse approximation
automatically implies a need for nonlinear approximation, in particular, for greedy approximation. We give a brief description of a sparse approximation problem   and present a  discussion of the obtained results and their
relation to previous work. In Section 2 we concentrate on breakthrough results from \cite{LivTem} and \cite{Tsp}. In these papers we extended a fundamental result of Tong Zhang \cite{Zh} on the Lebesgue-type inequality for the RIP dictionaries in a Hilbert space (see Theorem \ref{T2.2} below) in several directions. We found new more general than the RIP conditions on a dictionary, which still guarantee the Lebesgue-type inequalities in a Hilbert space setting. We generalized these conditions to a Banach space setting and proved the Lebesgue-type inequalities for dictionaries satisfying those conditions. To illustrate the power of new conditions we applied this new technique to bases instead of redundant dictionaries. In particular, this technique gave very strong results for the trigonometric system. 

In a general setting we are working in a Banach space $X$ with a redundant system of elements $\D$ (dictionary $\D$). There is a solid justification of importance of a Banach space setting in numerical analysis in general and in sparse approximation in particular (see, for instance, \cite{Tbook}, Preface, and \cite{ST}). 
An element (function, signal) $f\in X$ is said to be $m$-sparse with respect to $\D$ if
it has a representation $f=\sum_{i=1}^mx_ig_i$,   $g_i\in \D$, $i=1,\dots,m$. The set of all $m$-sparse elements is denoted by $\Sigma_m(\D)$. For a given element $f_0$ we introduce the error of best $m$-term approximation
$
\sigma_m(f_0,\D) := \inf_{f\in\Sigma_m(\D)} \|f_0-f\|.
$
We are interested in the following fundamental problem of sparse approximation. 

{\bf Problem.} How to design a practical algorithm that builds sparse approximations comparable to best $m$-term approximations? 

In a general setting we study an algorithm (approximation method) $\A = \{A_m(\cdot,\D)\}_{m=1}^\infty$ with respect to a given dictionary $\D$. The sequence of mappings $A_m(\cdot,\D)$ defined on $X$ satisfies the condition: for any $f\in X$, $A_m(f,\D)\in \Sigma_m(\D)$. In other words, $A_m$ provides an $m$-term approximant with respect to $\D$. It is clear that for any $f\in X$ and any $m$ we have
$
\|f-A_m(f,\D)\| \ge \sigma_m(f,\D).
$
We are interested in such pairs $(\D,\A)$ for which the algorithm $\A$ provides approximation close to best $m$-term approximation. We introduce the corresponding definitions.
\begin{Definition}\label{D5.2} We say that $\D$ is an almost greedy dictionary with respect to $\A$ if there exist two constants $C_1$ and $C_2$ such that for any $f\in X$ we have
\begin{equation}\label{5.2}
\|f-A_{C_1m}(f,\D)\| \le C_2\sigma_m(f,\D).
\end{equation}
\end{Definition}
If $\D$ is an almost greedy dictionary with respect to $\A$ then $\A$ provides almost ideal sparse approximation. It provides $C_1m$-term approximant as good (up to a constant $C_2$) as ideal $m$-term approximant for every $f\in X$. In the case $C_1=1$ we call $\D$ a greedy dictionary.
We also need a more general definition. Let $\phi(u)$ be a  function such that 
$\phi(u)\ge 1$. 
\begin{Definition}\label{D5.3} We say that $\D$ is a $\phi$-greedy dictionary with respect to $\A$ if there exists a constant $C_3$ such that for any $f\in X$ we have
\begin{equation}\label{5.3}
\|f-A_{\phi(m)m}(f,\D)\| \le C_3\sigma_m(f,\D).
\end{equation}
\end{Definition}

If $\D=\Psi$ is a basis then in the above definitions we replace dictionary by basis. Inequalities of the form (\ref{5.2}) and (\ref{5.3}) are called the Lebesgue-type inequalities. 

In the above setting the quality criterion of the algorithm $\A$ is based on the Lebesgue-type inequalities, which hold for every individual $f\in X$. In classical approximation theory very often we use as a quality criterion of the algorithm $\A$ its performance  on a given class $F$. In this case we compare
$$
e_m(F,\A,\D):=\sup_{f\in F}\|f-A_m(f,\D)\|
$$
with 
$$
\sigma_m(F,\D):=\sup_{f\in F}\sigma_m(f,\D).
$$
We discuss this setting in Sections 5 and 6. 

In the case $\A=\{G_m(\cdot,\Psi)\}_{m=1}^\infty$ is the Thresholding Greedy Algorithm (TGA) the theory of greedy and almost greedy bases is well developed (see \cite{Tbook}). We remind that in case of a normalized basis $\Psi=\{\psi_k\}_{k=1}^\infty$ of a Banach space $X$ the TGA at the $m$th iteration 
gives an approximant  
$$
G_m(f,\Psi):=\sum_{j=1}^m c_{k_j}\psi_{k_j},\quad f=\sum_{k=1}^\infty c_k\psi_k, \quad |c_{k_1}|\ge |c_{k_2}| \ge \dots.
$$
In particular, it is known (see \cite{Tbook}, p. 17) that the univariate Haar basis is a greedy basis with respect to TGA for all $L_p$, $1<p<\infty$. Also, it is known that the TGA does not work well with respect to the trigonometric system.

We demonstrated in the paper \cite{Tsp} that the Weak Chebyshev Greedy Algorithm (WCGA) which we define momentarily is a solution to the above problem for a special class of dictionaries.  
 
Let $X$ be a real Banach space with norm $\|\cdot\|:=\|\cdot\|_X$. We say that a set of elements (functions) $\D$ from $X$ is a dictionary  if each $g\in \D$ has norm   one ($\|g\|=1$),
and the closure of $\sp \D$ is $X$.
For a nonzero element $g\in X$ we let $F_g$ denote a norming (peak) functional for $g$:
$
\|F_g\|_{X^*} =1,\qquad F_g(g) =\|g\|_X.
$
The existence of such a functional is guaranteed by the Hahn-Banach theorem.

Let
$t\in(0,1]$ be a given weakness parameter. We define the Weak Chebyshev Greedy Algorithm (WCGA) (see \cite{T15}) as a generalization for Banach spaces of the Weak Orthogonal Matching Pursuit (WOMP). In a Hilbert space the WCGA coincides with the WOMP. The WOPM is very popular in signal processing, in particular, in compressed sensing. In case $t=1$,   WOMP is called  Orthogonal Matching Pursuit (OMP).

 {\bf Weak Chebyshev Greedy Algorithm (WCGA).}
Let $f_0$ be given. Then for each $m\ge 1$ we have the following inductive definition.

(1) $\varphi_m :=\varphi^{c,t}_m \in \D$ is any element satisfying
$$
|F_{f_{m-1}}(\varphi_m)| \ge t\sup_{g\in\D}  | F_{f_{m-1}}(g)|.
$$

(2) Define
$
\Phi_m := \Phi^t_m := \sp \{\varphi_j\}_{j=1}^m,
$
and define $G_m := G_m^{c,t}$ to be the best approximant to $f_0$ from $\Phi_m$.

(3) Let
$
f_m := f^{c,t}_m := f_0-G_m.
$

The trigonometric system is a classical system that is known to be difficult to study. In \cite{Tsp} we study among other problems the problem of nonlinear sparse approximation with respect to it. Let  ${\mathcal R}{\mathcal T}$ denote the real trigonometric system 
$1,\sin 2\pi x,\cos 2\pi x, \dots$ on $[0,1]$ and let ${\mathcal R}{\mathcal T}_p$ to be its version normalized in $L_p([0,1])$. Denote ${\mathcal R}{\mathcal T}_p^d := {\mathcal R}{\mathcal T}_p\times\cdots\times {\mathcal R}{\mathcal T}_p$ the $d$-variate trigonometric system. We need to consider the real trigonometric system because the algorithm WCGA is well studied for the real Banach space. In order to illustrate performance of the WCGA we   discuss in this section  the above mentioned problem for the trigonometric system. 
   We proved in \cite{Tsp}  the following 
Lebesgue-type inequality for the WCGA.
\begin{Theorem}\label{T1.2} Let $\D$ be the normalized in $L_p$, $2\le p<\infty$, real $d$-variate trigonometric
system. Then    
for any $f_0\in L_p$ the WCGA with weakness parameter $t$ gives
\begin{equation}\label{I1.4}
\|f_{C(t,p,d)m\ln (m+1)}\|_p \le C\sigma_m(f_0,\D)_p .
\end{equation}
\end{Theorem}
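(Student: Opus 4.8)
The plan is to deduce the theorem from a general Lebesgue-type inequality for the WCGA that holds for any dictionary (or basis) satisfying suitable incoherence-type hypotheses — essentially the Banach-space analogue of Zhang's RIP result mentioned after Theorem~\ref{T2.2} — and then verify those hypotheses for the normalized trigonometric system in $L_p$, $2\le p<\infty$. Concretely, the two structural properties one needs are: (i) a bound on the norm of a linear combination $\|\sum_{k\in\Lambda}a_k e_k\|_p$ in terms of its coefficients for sets $\Lambda$ of bounded cardinality (an upper Nikolskii-type / Temlyakov-property bound), and (ii) a ``conditioning'' or best-approximation estimate showing that when the WCGA has already captured most of the energy of an $m$-sparse target, the residual's norming functional still has large inner product with some dictionary element. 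For the trigonometric system the key analytic input is the Nikolskii inequality between $L_p$ and $L_2$ (equivalently $L_\infty$) for trigonometric polynomials with spectrum of size $N$: $\|T\|_p \lesssim N^{1/2-1/p}\|T\|_2$ and $\|T\|_\infty \lesssim N^{1/2}\|T\|_2$, together with the Hausdorff--Young / Bernstein-type control that gives, in the $p\ge 2$ regime, the needed one-sided comparison between the $\ell_1$ norm of Fourier coefficients on a set of size $m$ and the $L_p$ norm.

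First I would state (citing \cite{Tsp}, \cite{LivTem}) the abstract theorem: if a dictionary $\D$ has the property that every $f$ with $\sigma_m(f,\D)_X$ small is well approximated, after $C(t)m\ln(m+1)$ steps of WCGA, by $G_{C(t)m\ln(m+1)}$, provided $\D$ satisfies the relevant $A_3(m,\cdot)$-type incoherence condition with parameters growing polynomially in $m$; the logarithmic oversampling factor $\ln(m+1)$ is exactly what absorbs the polynomial growth of those parameters. Then the core of the argument is the iteration/decay estimate: at each step the WCGA reduces $\|f_k\|_p$ by a factor $1-c/(\text{cond}(m))$, and because the trigonometric system's relevant condition number on $m$-sparse sets is polynomial in $m$ (here the exponent depends on $p$ and, through Nikolskii constants on $\Td$, on $d$), running the algorithm for a number of steps proportional to $m\ln(m+1)$ drives the residual below $C\sigma_m(f_0,\D)_p$. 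This is where the constant $C(t,p,d)$ enters, collecting the weakness parameter $t$, the exponent from the Nikolskii inequality in dimension $d$, and the modulus of smoothness constant of $L_p$ (which is of power type $2$ for $p\ge2$, a fact used crucially — this is why the hypothesis is $p\ge 2$ and not all $1<p<\infty$).

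The main obstacle, and the place where the real work of \cite{Tsp} lies, is verifying that the trigonometric system satisfies the abstract incoherence hypothesis with the right (polynomial) dependence on $m$ — i.e., proving the analogue of RIP for $\mathcal{RT}_p^d$. Unlike an orthonormal system in $L_2$, here one must control $\|P_\Lambda f\|_p$ and the action of the norming functional $F_{f_{k}}$ simultaneously, and the interaction between the nonlinear (data-dependent) choice of $\Lambda$ by the greedy steps and the fixed harmonic-analytic estimates is delicate; one typically splits the index set into the ``correct'' frequencies and the rest, estimates the wrong-frequency mass using the Chebyshev (best-approximation) projection property in step~(2) of the WCGA, and bootstraps. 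Handling the multivariate case adds the dimensional blow-up in the Nikolskii constants, which is why $d$ appears inside $C(t,p,d)$ but not in the exponent of $m\ln(m+1)$. Finally I would assemble the pieces: choose $N = C(t,p,d)m\ln(m+1)$, apply the decay estimate $N$ times starting from $f_0$, and conclude \eqref{I1.4}.
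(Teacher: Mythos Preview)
Your overall plan---apply the abstract Lebesgue-type theorem for the WCGA under property \textbf{A3}, then verify \textbf{A3} for the $L_p$-normalized trigonometric system---matches the paper's route (Theorem~\ref{T1.2} is Corollary~\ref{Example 2.} of Proposition~\ref{Example 1.}, which rests on Theorem~\ref{T2.7}). However, you misidentify where the difficulty lies. Verifying \textbf{A3} for ${\mathcal R}{\mathcal T}_p^d$, $p\ge 2$, is elementary: by Cauchy--Schwarz, $L_2$-orthogonality, and the trivial inequality $\|\cdot\|_2\le C\|\cdot\|_p$ on a bounded domain, one has $\sum_{i\in A}|x_i|\le C|A|^{1/2}\|f_A-g\|_p$ for any $g$ with frequency support disjoint from $A$, giving $r=1/2$ and $V=V(p,d)$ independent of $m$, hence $K^{rq'}=K$. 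The Nikol'skii inequality $\|T\|_p\lesssim N^{1/2-1/p}\|T\|_2$ you invoke goes the wrong way and plays no role here.

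The genuine gap is in your ``core argument''. The per-step contraction $\|f_k\|\le\|f_{k-1}\|(1-c/m)$ is Theorem~\ref{T2.3}, and iterating it $N\asymp m\ln m$ times yields only $\|f_N\|\le\|f_0\|(m+1)^{-c'}+2\epsilon$, which is not bounded by $C\epsilon$ unless one already controls $\|f_0\|$ in terms of $\epsilon$---precisely what a Lebesgue-type inequality cannot assume. The hard step, and the true source of the $\ln(m+1)$ factor, is the Zhang-type combinatorial argument behind Theorems~\ref{T2.4}--\ref{T2.5}: one constructs nested sets $B_j\subset\Gamma^k$ minimizing $\|f_{B_j}\|$ subject to $|\Gamma^k\setminus B_j|\le 2^{j-1}$, locates a level $L$ with $\|f_{B_{L-1}}\|\ge b\|f_{B_L}\|$, and shows that after $\asymp\beta\,2^{rq'L}$ further iterations the WCGA has either reached $\|f_{m_L}\|\le CU\epsilon$ or captured at least $2^{L-2}$ additional indices of the sparse support. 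The parameter $\beta$ must satisfy $16Ue^{-c_1\beta/2}<1$, forcing $\beta\asymp\ln U$; since \textbf{A3} with $r=1/2$ yields only $U=VK^{1/2}$ in \textbf{A2}, one gets $\beta\asymp\ln K$, and this is exactly where the logarithm enters. Your sketch omits this mechanism, and without it the argument does not close.
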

The Open Problem 7.1 (p. 91) from \cite{T18} asks if (\ref{I1.4}) holds without an extra 
$\ln (m+1)$ factor. Theorem \ref{T1.2} is the first result on the Lebesgue-type inequalities for the WCGA with respect to the trigonometric system. It provides a progress in solving the above mentioned open problem, but the problem is still open. 

We note that properties of a given basis with respect to TGA and WCGA could be very different. For instance, the class of quasi-greedy bases (with respect to TGA), that is the class of bases $\Psi$ for which $G_m(f,\Psi)$ converges for each $f\in X$, is a rather narrow subset of all bases. It is close in a certain sense to the set of unconditional bases. The situation is absolutely different for the WCGA. If $X$ is uniformly smooth then WCGA converges for each $f\in X$ with respect to any dictionary in $X$ (see \cite{Tbook}, Ch.6).

Theorem \ref{T1.2} shows that the WCGA is very well designed for the trigonometric system. We show in \cite{Tsp}  that an analog of (\ref{I1.4}) holds for 
uniformly bounded orthogonal systems.  The proof of Theorem \ref{T1.2} uses technique developed in compressed sensing for proving the Lebesgue-type inequalities for redundant dictionaries with special properties. First results on Lebesgue-type inequalities were proved for incoherent dictionaries (see \cite{Tbook} for a detailed discussion). Then a number of results were proved for dictionaries satisfying the Restricted Isometry Property (RIP) assumption. 
 The incoherence assumption on a dictionary is stronger than the   RIP assumption. The corresponding Lebesgue-type inequalities for the Orthogonal Matching Pursuit (OMP) under RIP assumption were not known for a while. As a result new greedy-type algorithms were introduced and exact recovery of sparse signals and the Lebesgue-type inequalities were proved for these algorithms: the Regularized  Orthogonal Matching Pursuit (see \cite{NV}), Compressive Sampling Matching Pursuit (CoSaMP) (see \cite{NT}), and the Subspace Pursuit (SP) (see \cite{DM}). The OMP is simpler than CoSaMP and SP, however, at the time of invention of CoSaMP and SP these algorithms provided exact recovery of sparse signals and the Lebesgue-type inequalities for dictionaries satisfying the Restricted Isometry Property (RIP) (see \cite{NT} and \cite{DM}). The corresponding results for the OMP were not known at that time. Later,  a breakthrough result in this direction was obtained by Zhang \cite{Zh}. In particular, he proved that if $\D$ satisfies RIP then the OMP recovers exactly all $m$-sparse signals within  $Cm$ iterations.  
In \cite{LivTem} and \cite{Tsp}  we developed Zhang's technique to obtain recovery results and the Lebesgue-type inequalities in the Banach space setting.

The above Theorem \ref{T1.2} guarantees that the WCGA works very well for each individual function $f$. It is a constructive method, which provides after $\asymp m\ln m$ iterations an error comparable to $\sigma_m(f,\D)$. Here are two important points. First, in order to guarantee a rate of decay of errors $\|f_n\|$ of the WCGA we would like to know how smoothness assumptions on $f_0$ affect the rate of decay of  $\sigma_m(f_0,\D)$. Second, if, as we believe, one cannot get rid of $\ln m$ in Theorem \ref{T1.2} then it would be nice to find a constructive method, which provides on a certain smoothness class the order of best $m$-term approximation after $m$ iterations. Thus, as a complement to Theorem \ref{T1.2} we would like to obtain results, which relate rate of decay of $\sigma_m(f,\Tr^d)_p$ to some smoothness type properties of 
$f$. In Section 5 we concentrate on constructive methods of $m$-term approximation.  We measure smoothness in terms of mixed derivative and mixed difference. We note that the function classes with bounded mixed derivative are not only interesting and challenging object for approximation theory but they are important in numerical computations.

We discuss here the problem of sparse approximation. This problem is closely connected with the problem of recovery of sparse functions (signals). In the sparse recovery problem we assume that an unknown function $f$ is sparse with respect to a given dictionary and we want to recover it. This problem was a starting point for the compressed sensing theory (see \cite{Tbook}, Ch.5). In particular, the celebrated contribution of the work of Candes, Tao and Donoho was to show that the recovery can be done by the $\ell_1$ minimization algorithm. We stress that $\ell_1$ minimization algorithm works for the exact recovery of sparse signals. It does not provide sparse approximation. The greedy-type algorithms discussed in this paper provide sparse approximation, satisfying the Lebesgue-type inequalities. It is clear that the Lebesgue-type inequalities (\ref{5.2}) and (\ref{5.3}) guarantee exact recovery of sparse signals. 

\section{Lebesgue-type inequalities. General results.}
  
   A very important advantage of the WCGA  is its convergence and rate of convergence properties. The WCGA is well defined for all $m$. Moreover, it is known (see \cite{T15} and \cite{Tbook}) that the WCGA with weakness parameter $t\in(0,1]$ converges for all $f_0$ in all uniformly smooth Banach spaces with respect to any dictionary. That is, when $X$ is a real Banach space and the modulus of smoothness of $X$ is defined as follows
\begin{equation}\label{1.4}
\rho(u):=\frac{1}{2}\sup_{x,y;\|x\|= \|y\|=1}\left|\|x+uy\|+\|x-uy\|-2\right|,
\end{equation}
then the uniformly smooth Banach space is the one with $\rho(u)/u\to 0$ when $u\to 0$.

We discuss here the Lebesgue-type inequalities for the WCGA with weakness parameter $t\in(0,1]$. This discussion is based on papers \cite{LivTem} and \cite{Tsp}. For notational convenience we consider here a countable dictionary $\D=\{g_i\}_{i=1}^\infty$. The following assumptions {\bf A1} and {\bf A2} were used in \cite{LivTem}.
 For a given $f_0$ let sparse element (signal)
 $$
 f:=f^\e=\sum_{i\in T}x_ig_i,\quad g_i\in\D,
 $$
 be such that $\|f_0-f^\e\|\le \e$ and $|T|=K$. For $A\subset T$ denote
 $$
 f_A:=f_A^\e := \sum_{i\in A}x_ig_i.
 $$
 
  {\bf A1.} We say that $f=\sum_{i\in T}x_ig_i$ satisfies the Nikol'skii-type $\ell_1X$ inequality with parameter $r$ if
 \begin{equation}\label{C1}
 \sum_{i\in A} |x_i| \le C_1|A|^{r}\|f_A\|,\quad A\subset T.
 \end{equation}
 We say that a dictionary $\D$ has the Nikol'skii-type $\ell_1X$ property with parameters $K$, $r$   if any $K$-sparse element satisfies the Nikol'skii-type
 $\ell_1X$ inequality with parameter $r$.

{\bf A2.}  We say that $f=\sum_{i\in T}x_ig_i$ has incoherence property with parameters $D$ and $U$ if for any $A\subset T$ and any $\Lambda$ such that $A\cap \Lambda =\emptyset$, $|A|+|\Lambda| \le D$ we have for any $\{c_i\}$
\begin{equation}\label{C2}
\|f_A-\sum_{i\in\Lambda}c_ig_i\|\ge U^{-1}\|f_A\|.
\end{equation}
We say that a dictionary $\D$ is $(K,D)$-unconditional with a constant $U$ if for any $f=\sum_{i\in T}x_ig_i$ with
$|T|\le K$ inequality (\ref{C2}) holds.

The term {\it unconditional} in {\bf A2} is justified by the following remark. The above definition of $(K,D)$-unconditional dictionary is equivalent to the following definition. Let $\D$ be such that any subsystem of $D$ distinct elements $e_1,\dots,e_D$ from $\D$ is linearly independent and for any $A\subset [1,D]$ with $|A|\le K$ and any coefficients $\{c_i\}$ we have
$$
\|\sum_{i\in A}c_ie_i\| \le U\|\sum_{i=1}^Dc_ie_i\|.
$$

It is convenient for us to use the following assumption {\bf A3} introduced in \cite{Tsp} which is a corollary of assumptions {\bf A1} and {\bf A2}. 

{\bf A3.} We say that $f=\sum_{i\in T}x_ig_i$ has $\ell_1$ incoherence property with parameters $D$, $V$, and $r$ if for any $A\subset T$ and any $\Lambda$ such that $A\cap \Lambda =\emptyset$, $|A|+|\Lambda| \le D$ we have for any $\{c_i\}$
\begin{equation}\label{C3}
\sum_{i\in A}|x_i| \le V|A|^r\|f_A-\sum_{i\in\Lambda}c_ig_i\|.
\end{equation}
A dictionary $\D$ has $\ell_1$ incoherence property with parameters $K$, $D$, $V$, and $r$ if for any $A\subset B$, $|A|\le K$, $|B|\le D$ we have for any $\{c_i\}_{i\in B}$
$$
\sum_{i\in A} |c_i| \le V|A|^r\|\sum_{i\in B} c_ig_i\|.
$$

It is clear that {\bf A1} and {\bf A2} imply {\bf A3} with $V=C_1U$. Also, {\bf A3} implies {\bf A1} with $C_1=V$ and {\bf A2} with $U=VK^r$. Obviously, we can restrict ourselves to $r\le 1$. 

We give a simple remark that widens the collection of dictionaries satisfying the above properties {\bf A1}, {\bf A2}, and {\bf A3}. 
\begin{Definition}\label{D<} Let $\D^1=\{g^1_i\}$ and $\D^2=\{g^2_i\}$ be countable dictionaries. We say that $\D^2$ $D$-dominates $\D^1$ (with a constant $B$) if for any set $\Lambda$, $|\Lambda|\le D$, of indices and any coefficients $\{c_i\}$ we have
$$
\|\sum_{i\in \Lambda} c_ig^1_i\| \le B \|\sum_{i\in \Lambda} c_ig^2_i\|.
$$
In such a case we write $\D^1 \prec \D^2$ or more specifically $\D^1 \le B\D^2$.

In the case $\D^1 \le E_1^{-1}\D^2$ and $\D^2 \le E_2\D^1$ we say that $\D^1$ and $\D^2$ are $D$-equivalent (with constants $E_1$ and $E_2$) and write $\D^1 \approx \D^2$ or more specifically $E_1\D^1 \le \D^2 \le E_2\D^1$. 
\end{Definition}

\begin{Proposition}\label{P2.1} Assume $\D^1$ has one of the properties {\bf A1} or {\bf A3}. 
If $\D^2$ $D$-dominates $\D^1$ (with a constant $B$) then $\D^2$ has the same property as $\D^1$: {\bf A1} with $C^2_1= C^1_1B$ or {\bf A3} with $V^2 = V^1B$. 
\end{Proposition}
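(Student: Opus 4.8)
The plan is to reduce both assertions to the defining inequality of $\D^1$ applied to a fixed coefficient vector, and then to invoke the $D$-domination hypothesis once, in order to pass from the $\D^1$-norm to the $\D^2$-norm of that same expansion. Both \textbf{A1} and \textbf{A3} bound a quantity of the form $\sum_{i\in A}|x_i|$ by a power of $|A|$ times the norm of an expansion over a finite index set, and $D$-domination is exactly a one-sided comparison of such norms; hence the transfer from $\D^1$ to $\D^2$ costs precisely one factor of $B$.

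First I would treat \textbf{A1}. Let $f^2=\sum_{i\in T}x_ig^2_i\in\Sigma_K(\D^2)$ be arbitrary, so $|T|\le K$, and fix $A\subset T$. Form the companion element $f^1:=\sum_{i\in T}x_ig^1_i\in\Sigma_K(\D^1)$ with the \emph{same} coefficients. The \textbf{A1} property of $\D^1$, applied to $f^1$ and to the set $A$, gives $\sum_{i\in A}|x_i|\le C^1_1|A|^r\|\sum_{i\in A}x_ig^1_i\|$. Since $|A|\le K\le D$, the $D$-domination hypothesis applied to the index set $A$ and the coefficients $(x_i)_{i\in A}$ gives $\|\sum_{i\in A}x_ig^1_i\|\le B\|\sum_{i\in A}x_ig^2_i\|$. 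Composing these two inequalities yields $\sum_{i\in A}|x_i|\le C^1_1B\,|A|^r\|\sum_{i\in A}x_ig^2_i\|$, which is \textbf{A1} for $\D^2$ with the asserted constant $C^2_1=C^1_1B$.

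The case of \textbf{A3} is carried out in exactly the same way. Fix sets $A\subset E$ with $|A|\le K$ and $|E|\le D$, and coefficients $\{c_i\}_{i\in E}$; I rename the outer set to $E$ only to keep it distinct from the domination constant $B$ appearing in the statement of \textbf{A3}. The \textbf{A3} property of $\D^1$ gives $\sum_{i\in A}|c_i|\le V^1|A|^r\|\sum_{i\in E}c_ig^1_i\|$, and $D$-domination applied to $E$ (permissible since $|E|\le D$) gives $\|\sum_{i\in E}c_ig^1_i\|\le B\|\sum_{i\in E}c_ig^2_i\|$. Combining the two produces the \textbf{A3} inequality for $\D^2$ with $V^2=V^1B$.

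There is no genuine obstacle here; the argument is pure bookkeeping. The only two points that deserve a moment's attention are that one must use \emph{the same} coefficient vector in $\D^1$ and in $\D^2$ so that $D$-domination applies without modification, and that every index set handed to the domination hypothesis has cardinality at most $D$ — automatic for \textbf{A3} by its very formulation, and for \textbf{A1} because the sparsity level $K$ does not exceed $D$ in the regime in which these properties are used.
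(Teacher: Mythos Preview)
Your proof is correct and follows essentially the same route as the paper: apply the \textbf{A1} (resp.\ \textbf{A3}) inequality for $\D^1$ to the given coefficient vector and then invoke $D$-domination once to replace the $\D^1$-expansion norm by the $\D^2$-expansion norm, incurring a single factor of $B$. The paper only writes out the \textbf{A3} case (in the $f_A-\sum_{i\in\Lambda}c_ig_i$ formulation rather than the $A\subset E$ dictionary formulation you use, but these are equivalent), while you spell out both cases; otherwise the arguments coincide.
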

\begin{proof} In both cases {\bf A1} and {\bf A3} the proof is the same. We demonstrate the case {\bf A3}. Let $f=\sum_{i\in T} x_ig_i^2$. Then by the {\bf A3} property of $\D^1$ we have
$$
\sum_{i\in A}|x_i| \le V^1|A|^r\|\sum_{i\in A} x_ig_i^1 - \sum_{i\in \Lambda} c_ig_i^1\| \le V^1B|A|^r\|\sum_{i\in A} x_ig_i^2 - \sum_{i\in \Lambda} c_ig_i^2\|.
$$
\end{proof}

\begin{Proposition}\label{P2.2} Assume $\D^1$ has  the property {\bf A2}. 
If $\D^1$ and $\D^2$ are $D$-equivalent (with constants $E_1$ and $E_2$) then $\D^2$ has  property   {\bf A2} with $U^2= U^1E_2/E_1$.   
\end{Proposition}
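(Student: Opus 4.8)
The plan is to chain the two $D$-equivalence inequalities around the $(K,D)$-unconditionality of $\D^1$, in the same spirit as the proof of Proposition \ref{P2.1}. Fix $f=\sum_{i\in T}x_ig_i^2$ with $|T|\le K$, a subset $A\subset T$, a set $\Lambda$ with $A\cap\Lambda=\emptyset$ and $|A|+|\Lambda|\le D$, and arbitrary coefficients $\{c_i\}_{i\in\Lambda}$. I must bound $\|\sum_{i\in A}x_ig_i^2-\sum_{i\in\Lambda}c_ig_i^2\|$ from below by $(U^2)^{-1}\|\sum_{i\in A}x_ig_i^2\|$.

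First I would move from $\D^2$ to $\D^1$ on the perturbed difference: since $|A\cup\Lambda|\le D$, applying the lower bound $E_1\D^1\le\D^2$ to the coefficient vector that equals $x_i$ on $A$ and $-c_i$ on $\Lambda$ gives $\|\sum_{i\in A}x_ig_i^2-\sum_{i\in\Lambda}c_ig_i^2\|\ge E_1\|\sum_{i\in A}x_ig_i^1-\sum_{i\in\Lambda}c_ig_i^1\|$. Next, regarding $\sum_{i\in A}x_ig_i^1$ as a $K$-sparse element of $\D^1$ (indeed $|A|\le|T|\le K$) and invoking property {\bf A2} of $\D^1$ with active set $A$ and perturbing set $\Lambda$ (legitimate since $|A|+|\Lambda|\le D$) yields $\|\sum_{i\in A}x_ig_i^1-\sum_{i\in\Lambda}c_ig_i^1\|\ge (U^1)^{-1}\|\sum_{i\in A}x_ig_i^1\|$. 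Finally I would return to $\D^2$ via the upper bound $\D^2\le E_2\D^1$ (here $|A|\le D$), which gives $\|\sum_{i\in A}x_ig_i^1\|\ge E_2^{-1}\|\sum_{i\in A}x_ig_i^2\|$. Multiplying these three estimates produces $\|\sum_{i\in A}x_ig_i^2-\sum_{i\in\Lambda}c_ig_i^2\|\ge (E_1/(U^1E_2))\|\sum_{i\in A}x_ig_i^2\|$, i.e. $\D^2$ has property {\bf A2} with $U^2=U^1E_2/E_1$.

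There is no genuine obstacle here; the argument is a three-line estimate. The only points requiring care are orienting each equivalence inequality in the correct direction — one is used to pass $\D^2\to\D^1$ on the term $f_A-\sum_{i\in\Lambda}c_ig_i$, the other to pass $\D^1\to\D^2$ on the term $\|f_A\|$ — and checking the cardinality bookkeeping, namely that every coefficient vector to which $D$-equivalence is applied is supported on a set of size at most $D$ and that the sparse element to which {\bf A2} of $\D^1$ is applied has support of size at most $K$.
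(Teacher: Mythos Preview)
Your proof is correct and follows exactly the paper's approach: chain the inequality $E_1\D^1\le\D^2$ on $A\cup\Lambda$, then apply {\bf A2} for $\D^1$, then use $\D^2\le E_2\D^1$ on $A$. The cardinality and orientation checks you note are the only points of care, and you have them right.
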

\begin{proof} Let $f=\sum_{i\in T} x_ig_i^2$. Then by $\D^1 \approx \D^2$ and  the {\bf A2} property of $\D^1$ we have
$$
\|\sum_{i\in A} x_ig_i^2 - \sum_{i\in \Lambda} c_ig_i^2\| \ge E_1\|\sum_{i\in A} x_ig_i^1 - \sum_{i\in \Lambda} c_ig_i^1\|
$$
$$
\ge  E_1 (U^1)^{-1}\|\sum_{i\in A} x_ig_i^1\| \ge  (E_1/E_2) (U^1)^{-1}\|\sum_{i\in A} x_ig_i^2\|.
$$
\end{proof}

We now proceed to main results of \cite{LivTem} and \cite{Tsp} on the WCGA with respect to redundant dictionaries. The following Theorem \ref{T2.1} from \cite{Tsp} in the case $q=2$ was proved in \cite{LivTem}. 

 \begin{Theorem}\label{T2.1} Let $X$ be a Banach space with $\rho(u)\le \gamma u^q$, $1<q\le 2$. Suppose $K$-sparse $f^\e$ satisfies {\bf A1}, {\bf A2} and $\|f_0-f^\e\|\le \e$. Assume that $rq'\ge 1$. Then the WCGA with weakness parameter $t$ applied to $f_0$ provides
$$
\|f_{C(t,\gamma,C_1)U^{q'}\ln (U+1) K^{rq'}}\| \le C\e\quad\text{for}\quad K+C(t,\gamma,C_1)U^{q'}\ln (U+1) K^{rq'}\le D
$$
with an absolute constant $C$.
\end{Theorem}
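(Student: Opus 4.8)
The plan is to run the WCGA on $f_0$ and track the decay of $\|f_m\|$ through the standard greedy one‑step inequality, showing that after the asserted number of iterations the residual must drop to $O(\e)$. First I would recall the basic WCGA estimate available in uniformly smooth spaces with $\rho(u)\le\gamma u^q$: at each step, comparing $f_m=f_0-G_m$ with $f_{m-1}$ and using the near‑optimality of $\varphi_m$ together with the definition of the modulus of smoothness, one obtains a recursion of the form
\begin{equation}\label{Plan1}
\|f_m\|^{q'}\le \|f_{m-1}\|^{q'}\Bigl(1-c\,t^{q'}\frac{\bigl(\sup_{g\in\D}|F_{f_{m-1}}(g)|\bigr)^{q'}}{\|f_{m-1}\|^{q'}}\Bigr)+(\text{terms involving }\e),
\end{equation}
with $c=c(\gamma)$. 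The decisive point is to lower‑bound $\sup_{g\in\D}|F_{f_{m-1}}(g)|$ in terms of $\|f_{m-1}\|$, using the sparsity structure of $f$ and the hypotheses {\bf A1}, {\bf A2}.

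The key step is the following dichotomy, which I would isolate as a lemma. Let $T$ be the support of $f^\e$, $|T|=K$, and let $T_m\subset T$ be the set of indices $i\in T$ for which $g_i$ has already been ``used'' (more precisely, the set of $i\in T$ not well represented by $\Phi_m$); one shows $F_{f_m}(g_i)$ is small for $i$ in the complement within $T$ of a controlled set. Applying $F_{f_{m-1}}$ to $f=\sum_{i\in T}x_ig_i$ and using $\|f_0-f\|\le\e$, we get $|F_{f_{m-1}}(f)|\ge\|f_{m-1}\|-\e$ (since $F_{f_{m-1}}(f_{m-1})=\|f_{m-1}\|$ and $F_{f_{m-1}}(G_{m-1})=0$ because $f_{m-1}\perp\Phi_{m-1}$ in the Chebyshev sense). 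Hence
\[
\|f_{m-1}\|-\e\le\sum_{i\in T}|x_i|\,|F_{f_{m-1}}(g_i)|.
\]
Splitting $T$ into the ``spent'' indices (those already captured by $\Phi_{m-1}$, where the functional values are small by a perturbation argument using {\bf A2}) and the ``active'' ones $A\subset T$, and using {\bf A1} in the form $\sum_{i\in A}|x_i|\le C_1|A|^r\|f_A\|\le C_1 K^r\cdot C\|f_{m-1}\|$ via the incoherence bound $\|f_A\|\le U\|f_{m-1}\|$ from {\bf A2}, one concludes that as long as $\|f_{m-1}\|$ is not yet $\le C\e$, a significant fraction of the mass forces
\[
\sup_{g\in\D}|F_{f_{m-1}}(g)|\ \gtrsim\ \frac{\|f_{m-1}\|}{C_1\,U\,|A_{m-1}|^r},
\]
where $|A_{m-1}|\le K$ is the number of still‑active indices. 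Feeding this into \eqref{Plan1} gives a geometric‑type decay whose rate degrades like $|A_{m-1}|^{-rq'}$; the worst case is $|A_{m-1}|\asymp K$, and a block argument (halving the active set) shows that $C(t,\gamma,C_1)\,U^{q'}\ln(U+1)\,K^{rq'}$ iterations suffice to reach residual $\le C\e$. The condition $rq'\ge1$ is exactly what makes the harmonic‑type sum $\sum_j j^{-rq'}$ over block sizes convergent (or at worst logarithmic), and the factor $\ln(U+1)$ absorbs the number of halving stages needed to drive the residual below the level at which {\bf A2} stops being effective.

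The main obstacle I expect is the bookkeeping of the ``spent'' versus ``active'' indices: one must argue that once enough steps have targeted indices near $T$, the norming functionals $F_{f_{m-1}}(g_i)$ for those $i$ become uniformly small, which requires combining the Chebyshev (best‑approximation) property of $G_{m-1}$ with the $(K,D)$‑unconditionality {\bf A2} and is where the restriction $K+(\text{iteration count})\le D$ is consumed. Handling the $\e$‑perturbation terms cleanly throughout — so that they never dominate until the residual is genuinely at the $O(\e)$ level — is the other delicate point; the standard device is to stop the analysis as soon as $\|f_m\|\le C\e$ and show the iteration budget is not exhausted before that happens. Once these two points are in place, the rest is the routine summation of the recursion \eqref{Plan1} over the logarithmically many halving blocks.
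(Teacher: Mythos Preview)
Your outline captures the coarse shape---a one-step WCGA decay inequality iterated in blocks---but it is missing the central combinatorial mechanism, and two of your explanations for the constants are misplaced in a way that indicates the argument would not close as written.

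The main gap. The crude decay you sketch is essentially Theorem~\ref{T2.3}: it gives $\|f_m\|\le\|f_k\|\exp(-c(m-k)/K^{rq'})+2\e$, which only reaches $C\e$ after roughly $(C_1U)^{q'}K^{rq'}\ln(\|f_0\|/\e)$ steps, a bound depending on $f_0$ and $\e$ rather than the uniform one asserted. To eliminate that dependence one must track the \emph{support}, not the norm. The paper (proof of Theorem~\ref{T2.4}) constructs, for each current residual $f_k$ with uncaptured support $\Gamma^k=T\setminus T^k$, sets $B_j\subset\Gamma^k$ of cardinality $\ge|\Gamma^k|-2^{j-1}$ chosen to have \emph{minimal} norm among all such subsets, sets $A_j=\Gamma^k\setminus B_j$, and a doubling index $L$ along $\|f_{B_j}\|$. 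Running the one-step lemma with $A=A_j$ for $\beta|A_j|^{rq'}$ steps at each level yields the dichotomy: either $\|f_{m_L}\|\le CU\e$, or $\|f_{\Gamma^{m_L}}\|<\|f_{B_{L-1}}\|$, which by the minimality of $B_{L-1}$ forces $|\Gamma^{m_L}|<|\Gamma^k|-2^{L-2}$. Your ``halving the active set'' does not supply this: the WCGA may select dictionary elements outside $T$, so there is no a priori reason the active set shrinks at all without the minimal-$B_j$ device.

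The misattributions. The factor $\ln(U+1)$ does not count halving stages; it enters because the per-level block length $\beta$ must satisfy $16Ue^{-c_1\beta/2}<1$ with $c_1\asymp(C_1U)^{-q'}$, giving $\beta\asymp U^{q'}\ln(U+1)$. The hypothesis $rq'\ge1$ is not about convergence of $\sum j^{-rq'}$; it is used only at the end, via $(\sum_s 2^{aL_s})^{1/a}\le\sum_s 2^{L_s}$ with $a=rq'\ge1$, to convert the total iteration budget into a lower bound on cumulative support reduction. Finally, the ``spent index'' heuristic---that $F_{f_{m-1}}(g_i)$ becomes uniformly small---is not how the bookkeeping runs: once $g_i$ has been picked it lies in $\Phi_{m-1}$ and is annihilated \emph{exactly} by $F_{f_{m-1}}$ via the Chebyshev property; there is no perturbation argument needed there. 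Note also that the paper proceeds in two stages: Theorem~\ref{T2.4} reaches $CU\e$ by the support-reduction mechanism, and then one more application of Theorem~\ref{T2.3} (with $m-k\asymp (C_1U)^{q'}K^{rq'}\ln U$) brings this down to $C\e$; your plan conflates the two.
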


 It was pointed out in \cite{LivTem} that Theorem \ref{T2.1} provides a corollary for Hilbert spaces that gives sufficient conditions somewhat weaker than the known RIP conditions on $\D$ for the Lebesgue-type inequality to hold. We formulate the corresponding definitions and results. 
  Let $\D$ be the Riesz dictionary with depth $D$ and parameter $\delta\in (0,1)$. This class of dictionaries is a generalization of the class of classical Riesz bases. We give a definition in a general Hilbert space (see \cite{Tbook}, p. 306).
\begin{Definition}\label{D3.1} A dictionary $\D$ is called the Riesz dictionary with depth $D$ and parameter $\delta \in (0,1)$ if, for any $D$ distinct elements $e_1,\dots,e_D$ of the dictionary and any coefficients $a=(a_1,\dots,a_D)$, we have
\begin{equation}\label{3.3}
(1-\de)\|a\|_2^2 \le \|\sum_{i=1}^D a_ie_i\|^2\le(1+\de)\|a\|_2^2.
\end{equation}
We denote the class of Riesz dictionaries with depth $D$ and parameter $\delta \in (0,1)$ by $R(D,\de)$.
\end{Definition}
The term Riesz dictionary with depth $D$ and parameter $\delta \in (0,1)$ is another name for a dictionary satisfying the Restricted Isometry Property (RIP) with parameters $D$ and $\de$. The following simple lemma holds.
\begin{Lemma}\label{L3.1} Let $\D\in R(D,\de)$ and let $e_j\in \D$, $j=1,\dots, s$. For $f=\sum_{i=1}^s a_ie_i$ and $A \subset \{1,\dots,s\}$ denote
$$
S_A(f) := \sum_{i\in A} a_ie_i.
$$
If $s\le D$ then
$$
\|S_A(f)\|^2 \le (1+\de)(1-\de)^{-1} \|f\|^2.
$$
\end{Lemma}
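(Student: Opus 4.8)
The plan is to derive both inequalities directly from the two-sided Riesz bound (\ref{3.3}), applied once to the full sum $f$ and once to the partial sum $S_A(f)$. The only thing that needs care is the bookkeeping of how many distinct dictionary elements are involved, which is exactly where the hypothesis $s\le D$ enters.

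First I would observe that, since $A\subset\{1,\dots,s\}$ and $s\le D$, both the family $\{e_i\}_{i=1}^s$ and its subfamily $\{e_i\}_{i\in A}$ consist of at most $D$ distinct elements of $\D$ (if $s<D$ one simply pads with additional distinct dictionary elements carrying zero coefficients, so that (\ref{3.3}) applies verbatim). Applying the upper bound in (\ref{3.3}) to $S_A(f)=\sum_{i\in A}a_ie_i$ gives
$$
\|S_A(f)\|^2\le (1+\de)\sum_{i\in A}|a_i|^2\le(1+\de)\sum_{i=1}^s|a_i|^2=(1+\de)\|a\|_2^2 .
$$
Next, applying the lower bound in (\ref{3.3}) to $f=\sum_{i=1}^s a_ie_i$ yields $(1-\de)\|a\|_2^2\le\|f\|^2$, i.e. $\|a\|_2^2\le(1-\de)^{-1}\|f\|^2$. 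Chaining the two displays produces
$$
\|S_A(f)\|^2\le(1+\de)\|a\|_2^2\le(1+\de)(1-\de)^{-1}\|f\|^2,
$$
which is the claimed bound.

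There is essentially no obstacle here; the lemma is a one-line consequence of monotonicity of $\sum_{i\in A}|a_i|^2$ in $A$ together with the RIP sandwich. The only point worth stating explicitly in the write-up is the reduction $|A|\le s\le D$ that legitimizes invoking (\ref{3.3}) for the truncated sum, so that no extra hypothesis on $|A|$ is needed beyond $s\le D$.
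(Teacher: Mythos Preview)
Your proof is correct; it is the standard two-line argument using the upper Riesz bound on $S_A(f)$, the lower Riesz bound on $f$, and the trivial monotonicity $\sum_{i\in A}|a_i|^2\le\|a\|_2^2$. The paper itself does not supply a proof---it introduces Lemma~\ref{L3.1} merely as ``the following simple lemma holds'' and moves on---so there is nothing to compare against beyond noting that your argument is exactly the intended one.
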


Lemma \ref{L3.1} implies that if $\D\in R(D,\de)$ then it is $(D,D)$-unconditional with a constant $U=(1+\de)^{1/2}(1-\de)^{-1/2}$.

 \begin{Theorem}\label{T2.2} Let $X$ be a Hilbert space. Suppose $K$-sparse $f^\e$ satisfies  {\bf A2} and $\|f_0-f^\e\|\le \e$. Then the WOMP with weakness parameter $t$ applied to $f_0$ provides
$$
\|f_{C(t,U) K}\| \le C\e\quad\text{for}\quad K+C(t,U) K\le D
$$
with an absolute constant $C$.
\end{Theorem}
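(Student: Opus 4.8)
The plan is to obtain Theorem~\ref{T2.2} as the Hilbert-space specialization of Theorem~\ref{T2.1}: the only thing to check is that in a Hilbert space the incoherence assumption {\bf A2} automatically forces the Nikol'skii-type assumption {\bf A1} with the sharp exponent $r=\tfrac12$, after which everything reduces to substituting parameters into Theorem~\ref{T2.1}.

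Since $X$ is a Hilbert space, $\rho(u)=(1+u^2)^{1/2}-1\le \tfrac12 u^2$, so the hypothesis of Theorem~\ref{T2.1} holds with $q=2$, $\gamma=\tfrac12$, hence $q'=2$; also the WCGA coincides with the WOMP, so the algorithm is the right one. Write $f=f^\e=\sum_{i\in T}x_ig_i$, $|T|=K$. The first step I would record is the ``monotonicity'' consequence of {\bf A2}: for every $B\subset A\subset T$ one has $\|f_B\|\le U\|f_A\|$. This is just {\bf A2} applied with $A\setminus B$ in the role of $\Lambda$ and the coefficients $c_i=-x_i$, since then $f_B-\sum_{i\in A\setminus B}c_ig_i=f_B+f_{A\setminus B}=f_A$; the size constraint $|A|\le K\le D$ is satisfied because the conclusion is only claimed when $K+C(t,U)K\le D$.

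The main step is a Rademacher symmetrization. Fix $A\subset T$ and a sign vector $\eta\in\{-1,1\}^A$, and split $A=A_+\cup A_-$ with $A_\pm=\{i\in A:\eta_i=\pm1\}$. Then $\sum_{i\in A}\eta_ix_ig_i=f_{A_+}-f_{A_-}$, so by the monotonicity consequence and the triangle inequality
$$
\Bigl\|\sum_{i\in A}\eta_ix_ig_i\Bigr\|\le \|f_{A_+}\|+\|f_{A_-}\|\le 2U\|f_A\|.
$$
Averaging the square of this over $\eta$ and using that $X$ is a Hilbert space with $\|g_i\|=1$ (so the cross terms vanish and $\Ave_\eta\bigl\|\sum_{i\in A}\eta_ix_ig_i\bigr\|^2=\sum_{i\in A}x_i^2$) gives $\sum_{i\in A}x_i^2\le 4U^2\|f_A\|^2$, and then Cauchy--Schwarz gives
$$
\sum_{i\in A}|x_i|\le |A|^{1/2}\Bigl(\sum_{i\in A}x_i^2\Bigr)^{1/2}\le 2U\,|A|^{1/2}\|f_A\|.
$$
Thus $f^\e$ satisfies {\bf A1} with $r=\tfrac12$ and $C_1=2U$.

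Finally I would apply Theorem~\ref{T2.1} with $q=2$, $\gamma=\tfrac12$, $r=\tfrac12$, $C_1=2U$; the hypothesis $rq'\ge1$ holds with equality, and since $rq'=1$ the iteration count $C(t,\gamma,C_1)U^{q'}\ln(U+1)K^{rq'}$ becomes $C(t,\tfrac12,2U)\,U^{2}\ln(U+1)\,K$, which I rename $C(t,U)K$; the range restriction and the absolute constant $C$ are inherited verbatim, yielding the statement. The one genuinely non-routine point is the symmetrization step: {\bf A2} on its face only controls what happens when a disjoint tail is subtracted, and the content of the argument is that averaging over sign patterns converts this into the $\ell_2$-lower bound $\sum_{i\in A}x_i^2\le 4U^2\|f_A\|^2$ \emph{with no loss of any power of $|A|$} --- it is precisely the absence of such a loss that pins the exponent at $r=\tfrac12$ and hence makes the number of WOMP iterations linear in $K$.
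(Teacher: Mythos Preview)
Your argument is correct and follows the same route the paper indicates: Theorem~\ref{T2.2} is obtained as the Hilbert-space specialization of Theorem~\ref{T2.1}, the only work being to verify that {\bf A2} forces {\bf A1} with $r=\tfrac12$ (and $C_1$ depending only on $U$), after which $q=2$, $rq'=1$ collapses the iteration count to a constant (depending on $t,U$) times $K$. Your Rademacher symmetrization step is exactly the right mechanism for this, and the monotonicity consequence of {\bf A2} that feeds into it is correctly derived.
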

Theorem \ref{T2.2} implies the following corollaries.
 \begin{Corollary}\label{C2.1} Let $X$ be a Hilbert space. Suppose any $K$-sparse $f$ satisfies   {\bf A2}.   Then the WOMP with weakness parameter $t$ applied to $f_0$ provides
$$
\|f_{C(t,U) K}\| \le C\sigma_K(f_0,\D)\quad\text{for}\quad K+C(t,U) K\le D
$$
with an absolute constant $C$.
\end{Corollary}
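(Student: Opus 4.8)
The plan is to obtain Corollary \ref{C2.1} from Theorem \ref{T2.2} by a routine ``infimum over near-best approximants'' argument, exploiting the crucial point that the WOMP iterates $f_m$ depend only on $f_0$ and $\D$, not on any auxiliary sparse signal, and that the number of iterations $C(t,U)K$ and the constant $C$ in Theorem \ref{T2.2} depend only on $t$, $U$ (hence on $K$ only through the stated linear bound), not on $\e$.

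First I would fix an arbitrary $\e>\sigma_K(f_0,\D)$ and choose a $K$-sparse element $f^\e=\sum_{i\in T}x_ig_i$, $|T|=K$, with $\|f_0-f^\e\|\le\e$; such an element exists by the definition of $\sigma_K(f_0,\D)$. By hypothesis every $K$-sparse element satisfies {\bf A2}, so in particular this $f^\e$ does, with the given constant $U$ and depth $D$. Next I would invoke Theorem \ref{T2.2}: since $K+C(t,U)K\le D$ by assumption, it yields
$$
\|f_{C(t,U)K}\|\le C\e
$$
with an absolute constant $C$. The key observation is that the left-hand side here is the WOMP iterate of $f_0$ after $C(t,U)K$ steps --- a single, fixed element determined by $f_0$ and $\D$ alone --- and the index $C(t,U)K$ as well as $C$ are independent of the particular choice of $f^\e$ and of $\e$. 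Therefore the inequality $\|f_{C(t,U)K}\|\le C\e$ holds simultaneously for every $\e>\sigma_K(f_0,\D)$.

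Letting $\e\downarrow\sigma_K(f_0,\D)$ then gives $\|f_{C(t,U)K}\|\le C\sigma_K(f_0,\D)$, which is exactly the claimed Lebesgue-type inequality; in the degenerate case $\sigma_K(f_0,\D)=0$ the same limiting argument gives $f_{C(t,U)K}=0$. I do not expect any genuine obstacle here: the only point requiring a moment's care is the uniformity just described, i.e.\ making sure that passing to the infimum is legitimate because neither the algorithm's run nor the number of iterations nor the constant changes with $\e$. All of that is built into the statement of Theorem \ref{T2.2}, so the corollary follows immediately.
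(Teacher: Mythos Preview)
Your argument is correct and is exactly the routine deduction the paper has in mind: the paper states Corollary \ref{C2.1} as an immediate consequence of Theorem \ref{T2.2} without giving a separate proof, and the standard ``take $\e>\sigma_K(f_0,\D)$, apply the theorem, let $\e\downarrow\sigma_K$'' step you spell out is precisely how one passes from the pointwise estimate to the Lebesgue-type inequality.
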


 \begin{Corollary}\label{C2.2} Let $X$ be a Hilbert space. Suppose $\D\in R(D,\de)$.     Then the WOMP with weakness parameter $t$ applied to $f_0$ provides
$$
\|f_{C(t,\de) K}\| \le C\sigma_K(f_0,\D)\quad\text{for}\quad K+C(t,\de) K\le D
$$
with an absolute constant $C$.
\end{Corollary}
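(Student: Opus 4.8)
The plan is to deduce Corollary \ref{C2.2} from Corollary \ref{C2.1} by checking that the Riesz (RIP) condition $\D\in R(D,\de)$ forces every $K$-sparse element to satisfy assumption \textbf{A2}; once this is done, the corollary is immediate.

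First I would record the passage from RIP to unconditionality. Fix a $K$-sparse $f=\sum_{i\in T}x_ig_i$ with $|T|\le K$, a subset $A\subset T$, and a set $\Lambda$ with $A\cap\Lambda=\emptyset$ and $|A|+|\Lambda|\le D$, together with arbitrary coefficients $\{c_i\}_{i\in\Lambda}$. The element $h:=f_A-\sum_{i\in\Lambda}c_ig_i$ is a linear combination of the $|A|+|\Lambda|\le D$ distinct dictionary elements $\{g_i\}_{i\in A\cup\Lambda}$, so Lemma \ref{L3.1} applies to $h$ (with the index subset there taken to correspond to $A$): it yields $\|f_A\|^2=\|S_A(h)\|^2\le(1+\de)(1-\de)^{-1}\|h\|^2$, i.e.
$$
\|f_A-\sum_{i\in\Lambda}c_ig_i\|\ge U^{-1}\|f_A\|,\qquad U:=\Bigl(\frac{1+\de}{1-\de}\Bigr)^{1/2}.
$$
This is precisely inequality (\ref{C2}); since $f$, $A$, $\Lambda$, $\{c_i\}$ were arbitrary, $\D$ is $(K,D)$-unconditional with constant $U=U(\de)$, and in particular every $K$-sparse $f$ satisfies \textbf{A2} with parameters $D$ and $U$. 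This is exactly the remark made right after Lemma \ref{L3.1}, specialized from $(D,D)$- to $(K,D)$-unconditionality.

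Then I would simply invoke Corollary \ref{C2.1} with this value of $U$. Under the stated range restriction $K+C(t,U)K\le D$ it gives the Lebesgue-type inequality $\|f_{C(t,U)K}\|\le C\sigma_K(f_0,\D)$ with an absolute constant $C$. Because $U$ is a function of $\de$ alone, the admissible-iteration constant $C(t,U)$ can be rewritten as $C(t,\de)$, which yields the claimed bound and completes the argument.

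There is essentially no obstacle at the level of Corollary \ref{C2.2} itself: all of the real work — the greedy-iteration bookkeeping behind Theorem \ref{T2.1}, its Hilbert-space specialization Theorem \ref{T2.2}, and the best-$m$-term comparison giving Corollary \ref{C2.1} — has already been carried out upstream. The only point that needs care is matching the sparsity budget: \textbf{A2} is required to hold with the \emph{depth} parameter $D$ (not merely with $K$), and this is exactly why the hypothesis $\D\in R(D,\de)$ is imposed with the same $D$ that appears in the range condition $K+C(t,\de)K\le D$. One should also note $U>1$, so $C(t,\de)$ is a genuine finite constant for each fixed $t\in(0,1]$ and $\de\in(0,1)$.
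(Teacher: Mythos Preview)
Your argument is correct and follows the same path the paper indicates: the remark immediately after Lemma~\ref{L3.1} records that $\D\in R(D,\de)$ implies $(D,D)$-unconditionality with $U=(1+\de)^{1/2}(1-\de)^{-1/2}$, hence every $K$-sparse element satisfies \textbf{A2}, and then Corollary~\ref{C2.1} (equivalently Theorem~\ref{T2.2}) gives the conclusion with $C(t,U)=C(t,\de)$. The paper does not spell out a separate proof for Corollary~\ref{C2.2}, so your write-up simply makes explicit what the paper leaves implicit.
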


  We   emphasized in \cite{LivTem} that in Theorem \ref{T2.1} we impose our conditions on an individual function $f^\e$. It may happen that the dictionary does not have the Nikol'skii  $\ell_1X$ property and $(K,D)$-unconditionality but the given $f_0$ can be approximated by $f^\e$ which does satisfy assumptions {\bf A1} and {\bf A2}. Even in the case of a Hilbert space the above results from \cite{LivTem} add something new to the study based on the RIP property of a dictionary. First of all, Theorem \ref{T2.2} shows that it is sufficient to impose assumption {\bf A2} on $f^\e$ in order to obtain exact recovery and the Lebesgue-type inequality results. Second, Corollary \ref{C2.1} shows that the condition {\bf A2}, which is weaker than the RIP condition, is sufficient for exact recovery and the Lebesgue-type inequality results. Third, Corollary \ref{C2.2} shows that even if we impose our assumptions in terms of RIP we do not need to assume that $\de < \de_0$. In fact, the result works for all $\de<1$ with parameters depending on $\de$.
  
Theorem \ref{T2.1} follows from the combination of Theorems \ref{T2.3} and \ref{T2.4}.
In case $q=2$ these theorems were proved in \cite{LivTem} and in general case $q\in(1,2]$ -- in \cite{Tsp}. 

\begin{Theorem}\label{T2.3} Let $X$ be a Banach space with $\rho(u)\le \gamma u^q$, $1<q\le 2$. Suppose for a given $f_0$ we have $\|f_0-f^\e\|\le \e$ with $K$-sparse $f:=f^\e$ satisfying {\bf A3}. Then for any $k\ge 0$ we have for $K+m \le D$
$$
\|f_m\| \le \|f_k\|\exp\left(-\frac{c_1(m-k)}{K^{rq'}}\right) +2\e, \quad q':=\frac{q}{q-1},
$$
where $c_1:= \frac{t^{q'}}{2(16\gamma)^{\frac{1}{q-1}} V^{q'}}$.
\end{Theorem}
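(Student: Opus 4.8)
The plan is to prove a one--step contraction bound for $\|f_m\|$ and then iterate it. Assume $f_{m-1}\neq 0$ (otherwise there is nothing to prove) and, replacing $\varphi_m$ by $-\varphi_m$ if necessary (which leaves $\Phi_m$ unchanged), assume $F_{f_{m-1}}(\varphi_m)\ge 0$. Since $G_{m-1}$ is a best approximant to $f_0$ from $\Phi_{m-1}$, for every $\lambda>0$ we have $\|f_m\|\le\|f_0-G_{m-1}-\lambda\varphi_m\|=\|f_{m-1}-\lambda\varphi_m\|$. Feeding in the definition of $\rho$, namely $\|f_{m-1}-\lambda\varphi_m\|+\|f_{m-1}+\lambda\varphi_m\|\le 2\|f_{m-1}\|\bigl(1+\rho(\lambda/\|f_{m-1}\|)\bigr)$, the lower bound $\|f_{m-1}+\lambda\varphi_m\|\ge F_{f_{m-1}}(f_{m-1}+\lambda\varphi_m)=\|f_{m-1}\|+\lambda|F_{f_{m-1}}(\varphi_m)|$, the weakness relation $|F_{f_{m-1}}(\varphi_m)|\ge t\sup_{g\in\D}|F_{f_{m-1}}(g)|$, and $\rho(u)\le\gamma u^q$, one obtains, for all $\lambda>0$,
\[
\|f_m\|\le\|f_{m-1}\|-\lambda t\sup_{g\in\D}|F_{f_{m-1}}(g)|+2\gamma\|f_{m-1}\|^{1-q}\lambda^{q}.
\]

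The heart of the matter is a lower bound $\sup_{g\in\D}|F_{f_{m-1}}(g)|\ge c\,(VK^{r})^{-1}$ (absolute $c$), valid while $\|f_{m-1}\|$ is not already comparable with $\e$. Here I would use that for the WCGA one may take $F_{f_{m-1}}$ to annihilate $\Phi_{m-1}=\sp\{\varphi_j\}_{j=1}^{m-1}$ (the best--approximation residual property; see \cite{Tbook}). Put $T_1:=\{i\in T:\ g_i\in\{\varphi_1,\dots,\varphi_{m-1}\}\}$ and $A:=T\setminus T_1$; then $F_{f_{m-1}}(G_{m-1})=0$ and $F_{f_{m-1}}(g_i)=0$ for $i\in T_1$, whence
\[
\|f_{m-1}\|=F_{f_{m-1}}(f_0)\le\e+F_{f_{m-1}}(f)=\e+\sum_{i\in A}x_iF_{f_{m-1}}(g_i)\le\e+\Bigl(\sum_{i\in A}|x_i|\Bigr)\sup_{g\in\D}|F_{f_{m-1}}(g)|.
\]
On the other hand, writing $G_{m-1}\in\Phi_{m-1}$ in the $\varphi_j$ and regrouping by dictionary atom shows $f-G_{m-1}=f_A-\sum_{i\in\Lambda}c_ig_i$ for some $\Lambda$ with $A\cap\Lambda=\emptyset$ and $|A|+|\Lambda|\le K+m\le D$, so {\bf A3} yields $\sum_{i\in A}|x_i|\le V|A|^{r}\|f-G_{m-1}\|\le VK^{r}\bigl(\|f_{m-1}\|+\e\bigr)$ (using $f-G_{m-1}=f_{m-1}-(f_0-f)$). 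Combining the two displays gives $\|f_{m-1}\|-\e\le VK^{r}\bigl(\|f_{m-1}\|+\e\bigr)\sup_{g\in\D}|F_{f_{m-1}}(g)|$. If $\|f_{m-1}\|\le 2\e$ there is nothing to do at this step; if $\|f_{m-1}\|>2\e$, then $\|f_{m-1}\|-\e>\tfrac12\|f_{m-1}\|$ and $\|f_{m-1}\|+\e<\tfrac32\|f_{m-1}\|$, so $\sup_{g\in\D}|F_{f_{m-1}}(g)|\ge c\,(VK^{r})^{-1}$.

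To conclude, in the case $\|f_{m-1}\|>2\e$ I substitute this lower bound into the one--step estimate and minimize the right--hand side over $\lambda>0$; the minimizing $\lambda$ is a constant multiple of $\bigl(t/(\gamma VK^{r})\bigr)^{1/(q-1)}\|f_{m-1}\|$, and plugging it back produces $\|f_m\|\le\|f_{m-1}\|\bigl(1-c_1K^{-rq'}\bigr)$ with $c_1$ as in the statement — the exponent $q'=q/(q-1)$ appears from $1+\tfrac{1}{q-1}=q'$, all $q$--dependent numerical factors stay bounded since $1<q\le2$, and a careful accounting of the $\e$--terms pins down the precise constant. Now $1-x\le e^{-x}$ and iteration over the steps $k+1,\dots,m$ finish the proof: since the norms $\|f_j\|$ are nonincreasing, either $\|f_j\|\le 2\e$ for some $j$ with $k\le j\le m$, in which case $\|f_m\|\le\|f_j\|\le2\e$, or the contraction applies at every step and $\|f_m\|\le\|f_k\|(1-c_1K^{-rq'})^{m-k}\le\|f_k\|\exp(-c_1(m-k)/K^{rq'})$; in either case $\|f_m\|\le\|f_k\|\exp(-c_1(m-k)/K^{rq'})+2\e$. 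The delicate part is the middle paragraph — realizing $f-G_{m-1}$ in exactly the shape required by {\bf A3} (disjointness of $A$ and $\Lambda$ and the depth bound $|A|+|\Lambda|\le D$, which is where $K+m\le D$ enters) and using the vanishing of $F_{f_{m-1}}$ on $\Phi_{m-1}$ to discard the $T_1$--part; the smoothness inequality and the scalar optimization are routine.
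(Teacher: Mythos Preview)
Your argument is essentially the paper's own proof: you use the same set $A=\Gamma^{m-1}$, the same annihilation $F_{f_{m-1}}|_{\Phi_{m-1}}=0$ to get $F_{f_{m-1}}(f_A)=F_{f_{m-1}}(f^\e)\ge\|f_{m-1}\|-\e$, the same application of {\bf A3} to bound $\|f_A\|_1\le VK^r(\|f_{m-1}\|+\e)$, and the same modulus-of-smoothness inequality to produce the one-step decay.

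The only discrepancy is in how you handle $\e$ and claim the exact constant $c_1$. The paper does \emph{not} case-split at $2\e$; it assumes only $\|f_{m-1}\|\ge\e$, keeps the additive $\e$ in the one-step bound, and chooses the specific (non-optimal) value
\[
\lambda=\Bigl(\tfrac{t\,\|f_A\|_1^{\,q-1}}{16\gamma(VK^r)^q}\Bigr)^{1/(q-1)},
\]
checking $\lambda\le\lambda_1$ via $\|f_A\|_1\le 2VK^r\|f_{m-1}\|$. This yields the recursion $\|f_m\|\le\|f_{m-1}\|(1-c_1K^{-rq'})+2c_1\e K^{-rq'}$, whose iteration gives the $+2\e$ via a geometric series. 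Your route---splitting at $2\e$ and bounding $\sup_g|F_{f_{m-1}}(g)|\ge \tfrac{1}{3}(VK^r)^{-1}$, then fully optimizing in $\lambda$---is perfectly valid but produces a constant of the form $t^{q'}/\bigl(c(q)\gamma^{1/(q-1)}V^{q'}\bigr)$ with a different numerical factor (the $3^{q'}$ from your $1/3$ does not match the $2\cdot 16^{1/(q-1)}$ in $c_1$). So your sentence ``a careful accounting of the $\e$-terms pins down the precise constant'' is where the proofs actually diverge: to recover the stated $c_1$ you need the paper's additive-$\e$ recursion and its particular choice of $\lambda$, not the $2\e$ threshold plus exact minimization.
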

In all theorems that follow we assume $rq'\ge 1$.
\begin{Theorem}\label{T2.4} Let $X$ be a Banach space with $\rho(u)\le \gamma u^q$, $1<q\le 2$. Suppose $K$-sparse $f^\e$ satisfies {\bf A1}, {\bf A2} and $\|f_0-f^\e\|\le \e$. Then the WCGA with weakness parameter $t$ applied to $f_0$ provides
$$
\|f_{C'U^{q'}\ln (U+1) K^{rq'}}\| \le CU\e\quad\text{for}\quad K+C'U^{q'}\ln (U+1) K^{rq'}\le D
$$
with an absolute constant $C$ and $C' = C_2(q)\gamma^{\frac{1}{q-1}} C_1^{q'} t^{-q'}$.
\end{Theorem}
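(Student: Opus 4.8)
The plan is to treat Theorem~\ref{T2.3} as the engine and then upgrade it. First, as noted after {\bf A3}, assumptions {\bf A1} and {\bf A2} imply {\bf A3} with $V=C_1U$, so Theorem~\ref{T2.3} applies verbatim to $f_0$: for $K+m\le D$ and any $k\ge 0$,
\[
\|f_m\|\le\|f_k\|\exp\!\left(-\frac{c_1(m-k)}{K^{rq'}}\right)+2\e,\qquad
c_1\asymp\frac{t^{q'}}{\gamma^{1/(q-1)}(C_1U)^{q'}}.
\]
Consequently one block of $\asymp C'U^{q'}K^{rq'}$ WCGA steps (with $C'=C_2(q)\gamma^{1/(q-1)}C_1^{q'}t^{-q'}$) multiplies the running norm by $e^{-1}$, modulo the additive $2\e$, and $C'U^{q'}\ln(U+1)K^{rq'}$ steps multiply it by $(U+1)^{-1}$. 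Thus the only obstruction to the conclusion is that $\|f_0\|$ need not be comparable to $U\e$: naive iteration of Theorem~\ref{T2.3} costs $\asymp\ln(\|f_0\|/\e)$ blocks, whereas Theorem~\ref{T2.4} asks for $\asymp\ln(U+1)$ blocks, at the modest price of the weaker plateau $CU\e$.

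To remove the $\|f_0\|$-dependence I would run a support-exhaustion (``capturing'') argument resting on {\bf A2}. Along the run on $f_0$ track, at step $m$, the set $\Gamma_m\subset T$ of support indices already ``captured'' — those $i$ for which $g_i$ is well approximated from $\Phi_m$ — and set $K_m:=|T\setminus\Gamma_m|\le K$. Two estimates drive the argument. \emph{(i) Lower bound on the greedy gain.} Since the norming functional $F_{f_{m-1}}$ annihilates $\Phi_{m-1}$, which (by the definition of capturing) nearly contains $f^\e_{\Gamma_{m-1}}$, and since $F_{f_{m-1}}(f_0)=\|f_{m-1}\|$ and $\|f_0-f^\e\|\le\e$, one gets $|F_{f_{m-1}}(f^\e_{T\setminus\Gamma_{m-1}})|\ge\|f_{m-1}\|-C\e$ up to the accumulated capturing error; dividing by $\sum_{i\in T\setminus\Gamma_{m-1}}|x_i|\le VK_{m-1}^r\|f^\e_{T\setminus\Gamma_{m-1}}\|$ (assumption {\bf A3}, applied only on the uncaptured block) and feeding the resulting lower bound for $\sup_{g\in\D}|F_{f_{m-1}}(g)|$ into the one-step WCGA estimate coming from $\rho(u)\le\gamma u^q$ yields a per-step decay in which $K$ is replaced by the current, smaller $K_{m-1}$. \emph{(ii) Upper bound on the residual.} Conversely, since $G_m$ is optimal on $\Phi_m$ and $\Phi_m$ (by capturing) nearly contains $f^\e_{\Gamma_m}$, one has $\|f_m\|\le\|f^\e_{T\setminus\Gamma_m}\|+C\e$, while {\bf A2} gives $\|f^\e_{T\setminus\Gamma_m}\|\le U(\|f_m\|+C\e)$; in particular once $\Gamma_m=T$ the residual is $\le CU\e$, and in general $\|f_m\|$ cannot be driven below the level that (ii) pins to the uncaptured part. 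Combining (i) and (ii): as long as $\|f_m\|>CU\e$, a block of $\asymp V^{q'}K_m^{rq'}$ WCGA steps must enlarge $\Gamma_m$, for otherwise (i) would push $\|f_m\|$ below the level of (ii), a contradiction. Since $|T|=K$ there are at most $K$ capture events, and because the block lengths $V^{q'}K_m^{rq'}$ shrink as $\Gamma_m$ grows, summing them (with the support grouped dyadically by coefficient size) produces a total iteration count $\asymp C'U^{q'}\ln(U+1)K^{rq'}$ after which $\|f_m\|\le CU\e$ — which is Theorem~\ref{T2.4}.

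The delicate points, where the real work lies, are: making ``$g_i$ is well approximated from $\Phi_m$'' quantitatively precise and stable along the whole trajectory in a redundant (non-basis) dictionary, where $\Phi_m$ need not be spanned by elements indexed in $T$; proving the two-sided residual/uncaptured-mass comparison of step (ii) uniformly in $m$; and — the crux — organizing the dyadic grouping of the coefficients of $f^\e$ so that the number of ``wasted'' greedy steps between successive captures telescopes to only $O(\ln(U+1))$ blocks of length $K^{rq'}$ rather than to $O(\ln(\|f_0\|/\e))$ of them. This is precisely the place where {\bf A1} (Nikol'skii, absorbed into $V=C_1U$) and {\bf A2} (unconditionality, absorbed into $U$) must be used in tandem, and it is also the source of the unavoidable factor $U$ in front of $\e$. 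Finally, Theorem~\ref{T2.1} follows by one further pass of Theorem~\ref{T2.3}: starting from step $N:=C'U^{q'}\ln(U+1)K^{rq'}$ with $\|f_N\|\le CU\e$ and running another $\asymp N$ steps multiplies $CU\e$ by $(U+1)^{-1}$, leaving $\|f_{2N}\|\le C\e$; relabelling $2N$ as $C(t,\gamma,C_1)U^{q'}\ln(U+1)K^{rq'}$ gives the stated bound.
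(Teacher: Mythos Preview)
Your high-level strategy matches the paper's: track the uncaptured support (the paper writes $\Gamma^m:=T\setminus T^m$, where $T^m$ is simply the set of $i\in T$ with $g_i$ literally among $\varphi_1,\dots,\varphi_m$ --- no approximate notion of capture is needed), use {\bf A2} for the comparison $\|f_{\Gamma^m}\|\le U(\|f_m\|+\e)$ (your (ii)), and use a refined one-step decay in which $K$ is replaced by the cardinality of a subset of the current uncaptured set (your (i)). Your final remark on deducing Theorem~\ref{T2.1} from Theorem~\ref{T2.4} via one more pass of Theorem~\ref{T2.3} is also exactly how the paper proceeds.

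However, your counting argument has a real gap. You argue that a block of $\asymp V^{q'}K_m^{rq'}$ steps forces at least one new capture, then sum over $\le K$ capture events; but that sum is $\sum_{j=1}^K V^{q'}j^{rq'}\asymp V^{q'}K^{rq'+1}$, too large by a factor $K/\ln(U+1)$, and your remark about ``dyadic grouping by coefficient size'' neither fixes this nor is what the paper does. The paper's refined decay (Lemma~\ref{L5.1}) is stated for an \emph{arbitrary} splitting $\Gamma^k=A\cup B$: one gets $\|f_m\|\le\|f_{m-1}\|(1-c_1|A|^{-rq'})+2c_1|A|^{-rq'}(\|f_B\|+\e)$, with plateau $\|f_B\|$ rather than $0$. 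For $2^{n-1}<|\Gamma^k|\le 2^n$ one then chooses $B_j\subset\Gamma^k$ with $|B_j|\ge|\Gamma^k|-2^{j-1}$ \emph{minimizing} $\|f_{B_j}\|$ over all such sets (so $|A_j|\le 2^{j-1}$), finds the first index $L$ at which the chain $\|f_{B_{j_0}}\|,\|f_{B_{j_0+1}}\|,\dots$ fails to shrink by a fixed factor $b$, and applies the lemma successively with $A=A_{j_0},\dots,A_L$ for $[\beta|A_j|^{rq'}]$ steps each. The geometric control on $\|f_{B_j}\|$ for $j<L$ gives $\|f_{m_L}\|\le 8e^{-c_1\beta/2}\|f_{B_{L-1}}\|+4\e$; then {\bf A2} together with the minimality of $B_{L-1}$ forces $|\Gamma^{m_L}|<|\Gamma^k|-2^{L-2}$. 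Thus a single phase of $\le\beta\,2^{rq'L}$ iterations captures at least $2^{L-2}$ indices, not one. The final count uses $a:=rq'\ge 1$ and the inequality $(\sum a_s)^{1/a}\le\sum a_s^{1/a}$ to show that once the phases total $\ge 2^{2a}\beta K^{a}$ iterations, the captured indices total $\ge K$. The factor $\ln(U+1)$ enters only through the choice of $\beta$ (needed for $16Ue^{-c_1\beta/2}<1$), not through the number of phases. The missing idea, then, is precisely this $B_j$-minimization and geometric-chain stopping time, which trades $\asymp 2^{aL}$ iterations for $\asymp 2^L$ captures; without it your bound is off by a full power of $K$.
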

We formulate an immediate corollary of Theorem \ref{T2.4} with $\e=0$.
\begin{Corollary}\label{C2.3} Let $X$ be a Banach space with $\rho(u)\le \gamma u^q$. Suppose $K$-sparse $f$ satisfies {\bf A1}, {\bf A2}. Then the WCGA with weakness parameter $t$ applied to $f$ recovers it exactly after $C'U^{q'}\ln (U+1) K^{rq'}$ iterations under condition $K+C'U^{q'}\ln (U+1) K^{rq'}\le D$.
\end{Corollary}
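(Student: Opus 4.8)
The plan is to obtain the statement as an immediate specialization of Theorem~\ref{T2.4} to the case $\e=0$; no new work is needed beyond checking that the hypotheses transfer. First I would set $f_0:=f$ and regard $f$ itself as the sparse approximant $f^\e$ with $\e=0$. Then $\|f_0-f^\e\|=0\le\e$ trivially, and by assumption the $K$-sparse element $f=f^\e$ satisfies {\bf A1} and {\bf A2}. The standing requirement $rq'\ge 1$, in force for all theorems following Theorem~\ref{T2.3}, is likewise assumed here. Hence every hypothesis of Theorem~\ref{T2.4} is met with this choice of data.

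Next I would simply read off the conclusion of Theorem~\ref{T2.4} with $\e=0$. It yields
$$
\|f_{C'U^{q'}\ln(U+1)K^{rq'}}\|\le CU\e=0
$$
under the condition $K+C'U^{q'}\ln(U+1)K^{rq'}\le D$, with the same constant $C'=C_2(q)\gamma^{1/(q-1)}C_1^{q'}t^{-q'}$ as in that theorem. Writing $m:=C'U^{q'}\ln(U+1)K^{rq'}$, this says precisely that $f_m=0$. Since by the definition of the WCGA $f_m=f_0-G_m=f-G_m$ with $G_m\in\Phi_m\subset\Sigma_m(\D)$, the identity $f_m=0$ is exactly the assertion $G_m=f$, i.e. the WCGA applied to $f$ recovers it exactly after $m$ iterations, which is the claim.

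There is essentially no obstacle in this argument; the only point worth noting is why one must invoke Theorem~\ref{T2.4} rather than Theorem~\ref{T2.3}. Taking $k=0$ and $\e=0$ in Theorem~\ref{T2.3} gives only $\|f_m\|\le\|f_0\|\exp(-c_1 m/K^{rq'})$, which is strictly positive for every finite $m$ and therefore does not by itself guarantee recovery in finitely many steps. It is the additive form $\|f_m\|\le CU\e$ of Theorem~\ref{T2.4} — whose right-hand side collapses to $0$ when $\e=0$ — that delivers \emph{exact} recovery after the stated number of iterations.
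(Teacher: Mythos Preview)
Your proof is correct and matches the paper's approach exactly: the paper itself presents Corollary~\ref{C2.3} as ``an immediate corollary of Theorem~\ref{T2.4} with $\e=0$'' and gives no further argument. Your additional remark explaining why Theorem~\ref{T2.3} alone would not suffice is a nice clarification but goes beyond what the paper states.
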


We formulate versions of Theorem \ref{T2.4} with assumptions {\bf A1}, {\bf A2} replaced by a single assumption {\bf A3} and replaced by two assumptions {\bf A2} and {\bf A3}. The corresponding modifications in the proofs go as in the proof of Theorem \ref{T2.3}. 

\begin{Theorem}\label{T2.5} Let $X$ be a Banach space with $\rho(u)\le \gamma u^q$, $1<q\le 2$. Suppose $K$-sparse $f^\e$ satisfies {\bf A3}  and $\|f_0-f^\e\|\le \e$. Then the WCGA with weakness parameter $t$ applied to $f_0$ provides
$$
\|f_{C(t,\gamma,q)V^{q'}\ln (VK) K^{rq'}}\| \le CVK^r\e\quad\text{for}\quad K+C(t,\gamma,q)V^{q'}\ln (VK) K^{rq'}\le D
$$
with an absolute constant $C$ and $C(t,\gamma,q) = C_2(q)\gamma^{\frac{1}{q-1}}  t^{-q'}$.
\end{Theorem}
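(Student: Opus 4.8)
The plan is to rerun the proof of Theorem~\ref{T2.4}, replacing the two hypotheses {\bf A1} and {\bf A2} everywhere by the single hypothesis {\bf A3}, in exactly the way the proof of Theorem~\ref{T2.3} uses {\bf A3} in place of {\bf A1}$+${\bf A2}. In this substitution $V$ plays the role of $C_1$ and $VK^{r}$ plays the role of $U$; one works with {\bf A3} directly rather than going through {\bf A1} (with $C_1=V$) and {\bf A2} (with $U=VK^{r}$), since that route would cost an extra factor $V^{q'}K^{rq'}$ in the iteration count. Throughout, $rq'\ge 1$ is assumed, and we use that $\|f_m\|$ is non-increasing in $m$, so that once $\|f_m\|$ drops below a threshold it stays there.

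The first ingredient is a one-step reduction. Run the WCGA on $f_0$; after $m-1$ steps let $\Gamma=\Gamma_{m-1}\subset T$ be the indices of the support of $f=f^{\e}$ not yet selected, so $g_i\in\Phi_{m-1}$ for $i\in T\setminus\Gamma$. Since $G_{m-1}$ is the best approximant to $f_0$ from $\Phi_{m-1}$, the functional $F_{f_{m-1}}$ annihilates $\Phi_{m-1}$; hence $F_{f_{m-1}}(f-G_{m-1})=\|f_{m-1}\|-F_{f_{m-1}}(f_0-f)\ge\|f_{m-1}\|-\e$, while $f-G_{m-1}=f_\Gamma-\sum_{i\in\Lambda}c_ig_i$ with $\Lambda:=(T\setminus\Gamma)\cup\{\text{selected indices outside }T\}$ disjoint from $\Gamma$ and $|\Gamma|+|\Lambda|\le K+m-1\le D$. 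Testing $F_{f_{m-1}}$ against the $g_i$, $i\in\Gamma$, and applying {\bf A3} to the pair $(\Gamma,\Lambda)$ gives $\|f_{m-1}\|-\e\le\sum_{i\in\Gamma}|x_i|\,\sup_{g\in\D}|F_{f_{m-1}}(g)|\le V|\Gamma|^{r}\|f-G_{m-1}\|\,\sup_{g\in\D}|F_{f_{m-1}}(g)|$, so that, using $\|f-G_{m-1}\|\le\|f_{m-1}\|+\e$,
\[
\sup_{g\in\D}|F_{f_{m-1}}(g)|\ \ge\ \frac{\|f_{m-1}\|-\e}{V\,|\Gamma_{m-1}|^{r}\,(\|f_{m-1}\|+\e)}\ \ge\ \frac{1}{3V\,|\Gamma_{m-1}|^{r}}\qquad\text{whenever }\|f_{m-1}\|\ge 2\e .
\]
Inserting this into the standard WCGA smoothness bound $\|f_m\|\le\|f_{m-1}\|-\lambda|F_{f_{m-1}}(\varphi_m)|+2\gamma\|f_{m-1}\|^{1-q}\lambda^{q}$ ($\lambda\ge 0$, from $\rho(u)\le\gamma u^{q}$) and optimizing in $\lambda$ yields
\[
\|f_m\|\ \le\ \|f_{m-1}\|\Bigl(1-\frac{c(t,\gamma,q)}{V^{q'}|\Gamma_{m-1}|^{rq'}}\Bigr),\qquad c(t,\gamma,q)=c_0(q)\,\gamma^{-1/(q-1)}t^{q'},
\]
as long as $\|f_{m-1}\|\ge 2\e$ and $K+m\le D$. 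With $|\Gamma_{m-1}|\le K$ and iteration this recovers Theorem~\ref{T2.3} (with $V$), which is the ``modification as in the proof of Theorem~\ref{T2.3}'' referred to in the text; the same computation, via {\bf A3} $\Rightarrow$ {\bf A2} with $U=VK^{r}$, also gives the comparison $\|f_m\|\le\|f_{\Gamma_m}\|+\e$ and $\|f_m\|\ge (VK^{r})^{-1}\|f_{\Gamma_m}\|-\e$, hence $\|x_{\Gamma_m}\|_\infty\ge(\|f_m\|-\e)/K$, needed below.

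The second and principal ingredient is the control of the number of iterations: one must show that after $N:=C(t,\gamma,q)V^{q'}\ln(VK)K^{rq'}$ steps the residual has dropped to $CVK^{r}\e$, the essential point being that the logarithmic factor is $\ln(VK)$ and not the $\ln(\|f_0\|/\e)$ that a direct iteration of the geometric decay would produce. Here one follows Zhang's technique as developed in \cite{LivTem} and \cite{Tsp}: at most $K$ iterations can bring a new support element into $\Phi_m$; between two such capturing iterations $\Gamma$ is constant and the decay above runs at rate $(V^{q'}|\Gamma|^{rq'})^{-1}$; and an amortized count of how many consecutive non-capturing steps can occur before either $\|f_m\|$ falls to the threshold $CVK^{r}\e$ or a new support element is forced in — the step that uses the lower bound on $\|x_{\Gamma_m}\|_\infty$ above — summed over the at most $K$ values of $|\Gamma_m|$, produces $N\asymp V^{q'}K^{rq'}$ with the surviving $\ln(VK)$. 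If $\Gamma$ is exhausted before step $N$ then $f\in\Phi_N$ and $\|f_N\|\le\|f_0-f\|\le\e\le CVK^{r}\e$; otherwise the threshold gives $\|f_N\|\le CVK^{r}\e$ directly, and propagating constants through both branches gives $C(t,\gamma,q)=C_2(q)\gamma^{1/(q-1)}t^{-q'}$. I expect this second ingredient to be the real obstacle: the one-step estimate is essentially bookkeeping on top of Theorem~\ref{T2.3}, but replacing $\ln(\|f_0\|/\e)$ by $\ln(VK)$ requires the full support-capture analysis of \cite{LivTem,Tsp}.
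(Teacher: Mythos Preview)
Your first ingredient is essentially the $B=\emptyset$ special case of the key lemma (Lemma~\ref{L5.2} in the paper), and your derivation of it via {\bf A3} is correct. But the paper needs the full lemma: for an \emph{arbitrary} nonempty $A\subset\Gamma^k$ with $B:=\Gamma^k\setminus A$, either $\|f_{m-1}\|\le\e$ or
\[
\|f_m\|\le\|f_{m-1}\|(1-u)+2u(\|f_B\|+\e),\qquad u=c_2|A|^{-rq'} .
\]
The point of the {\bf A3} modification is that one bounds $\|f_{A(m)}\|_1\le V|A(m)|^r\|f-G_{m-1}\|\le V|A|^r(\|f_{m-1}\|+\e)$ directly, with $|A|$ and not $K$ in the exponent; this is what keeps the iteration count at $V^{q'}K^{rq'}$ rather than $V^{q'}K^{2rq'}$.

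Your second ingredient is where the proposal goes wrong. The mechanism you describe --- count captures of individual support elements, use a lower bound on $\|x_{\Gamma_m}\|_\infty$ to force the next capture, amortize over non-capturing steps --- is \emph{not} the argument of \cite{LivTem,Tsp}, and I do not see how to make it work: in WCGA a large coefficient $|x_i|$ does not by itself force $g_i$ to be selected, so there is no direct ``capture-forcing'' step of the kind you invoke. The paper's argument is quite different. Fixing $k$ with $\Gamma^k\neq\emptyset$, one builds dyadic sets $B_j\subset\Gamma^k$ with $|B_j|\ge|\Gamma^k|-2^{j-1}$ chosen to \emph{minimize} $\|f_{B_j}\|$ over that cardinality constraint; one then finds the first index $L$ at which $\|f_{B_{L-1}}\|\ge b\|f_{B_L}\|$ and applies the lemma above successively with $A=A_j:=\Gamma^k\setminus B_j$. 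After $\asymp\beta\sum_{j\le L}|A_j|^{rq'}\le\beta 2^{rq'L}$ iterations one lands at some $m_L$ with $\|f_{m_L}\|\le 8\eta\|f_{B_{L-1}}\|+4\e$. Using {\bf A2} (here with $U=VK^r$) one gets $\|f_{\Gamma^{m_L}}\|\le U(\|f_{m_L}\|+\e)$, and if this is below $\|f_{B_{L-1}}\|$ then the \emph{minimality} of $B_{L-1}$ forces $|\Gamma^{m_L}|<|\Gamma^k|-2^{L-2}$. So the support shrinks by a dyadic amount per phase, not one element at a time, and the total iteration count comes from the elementary inequality $\sum 2^{L_s-2}\ge\bigl(\sum 2^{aL_s}\bigr)^{1/a}/4$. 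The $\ln(VK)$ (replacing the $\ln(U+1)$ of Theorem~\ref{T2.4}) enters only through the choice of $\beta$ needed to make $16Ue^{-c_2\beta/2}<1$ with $U=VK^r$. You correctly flagged the second ingredient as the hard part; the missing idea is precisely this minimizing-$B_j$ construction and the resulting dyadic support reduction.
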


\begin{Theorem}\label{T2.6} Let $X$ be a Banach space with $\rho(u)\le \gamma u^q$, $1<q\le 2$. Suppose $K$-sparse $f^\e$ satisfies {\bf A2}, {\bf A3}  and $\|f_0-f^\e\|\le \e$. Then the WCGA with weakness parameter $t$ applied to $f_0$ provides
$$
\|f_{C(t,\gamma,q)V^{q'}\ln (U+1) K^{rq'}}\| \le CU\e\quad\text{for}\quad K+C(t,\gamma,q)V^{q'}\ln (U+1) K^{rq'}\le D
$$
with an absolute constant $C$ and $C(t,\gamma,q) = C_2(q)\gamma^{\frac{1}{q-1}}  t^{-q'}$.
\end{Theorem}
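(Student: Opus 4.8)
The plan is to follow the same two-step architecture used to derive Theorem \ref{T2.4}, namely to combine a "rate of decay" estimate with a counting argument on how many greedy iterations are needed to enter the regime governed by that rate. The decay engine is Theorem \ref{T2.3}: since {\bf A3} holds with parameters $K$, $D$, $V$, $r$, we get immediately that for $K+m\le D$ and any $k\ge 0$,
\begin{equation*}
\|f_m\| \le \|f_k\|\exp\!\left(-\frac{c_1(m-k)}{K^{rq'}}\right) + 2\e,\qquad c_1 = \frac{t^{q'}}{2(16\gamma)^{1/(q-1)}V^{q'}}.
\end{equation*}
So the only thing to establish is that after a controlled number of iterations the residual $\|f_n\|$ is already within a constant multiple of $\e$ (or of $U\e$), after which one application of the above with that $n$ as the base index $k$, and $m-k \asymp V^{q'}\ln(U+1)K^{rq'}$, drives the error down to $CU\e$.

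First I would set up the core inductive/iterative comparison exactly as in the proof of Theorem \ref{T2.4}, but replacing the two places where {\bf A1} and {\bf A2} are invoked separately by single invocations of {\bf A3} where possible, and of {\bf A2} where genuine unconditionality (rather than the $\ell_1$–norm control) is needed. Concretely: at a generic step $m$ one looks at the part of the sparse signal $f$ not yet "captured," writes $f - G_m$ in terms of $f_A$ for the uncaptured coordinate set $A\subset T$ plus the greedy subspace error, uses the weak greedy selection (1) together with the smoothness bound $\rho(u)\le\gamma u^q$ to get the standard one-step inequality for $\|f_m\|$ versus $\|f_{m-1}\|$, and then uses {\bf A3} in the form \eqref{C3} to convert $\sum_{i\in A}|x_i|$ into a lower bound by $\|f_{m-1}\|$ up to the factor $V|A|^r$. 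The role of {\bf A2} here is to pass from the full residual $f_{m-1}=f_0-G_m$ back to $\|f_A\|$ (absorbing the $G_m$ part and the $\e$-tail), which is precisely inequality \eqref{C2}; this is why the final error bound carries a factor $U$ rather than $VK^r$ as in Theorem \ref{T2.5}. Tracking constants, the "number of iterations to halve (or suitably reduce) the uncaptured mass" comes out as $\asymp V^{q'}t^{-q'}\gamma^{1/(q-1)}K^{rq'}$, and one needs $\asymp\ln(U+1)$ such rounds because at each round the relevant quantity shrinks geometrically and the unconditionality constant $U$ measures how far one must push before the captured part dominates — giving the iteration count $C(t,\gamma,q)V^{q'}\ln(U+1)K^{rq'}$ with $C(t,\gamma,q)=C_2(q)\gamma^{1/(q-1)}t^{-q'}$ as stated.

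The main obstacle, and the step I would be most careful with, is the bookkeeping that keeps the exponent on $V$ equal to $q'$ while the logarithmic factor involves only $U$ and not $V$: naively, each of the $\asymp\ln(U+1)$ rounds contributes a factor $V^{q'}$ to the iteration count, which would be fatal. The resolution (as in \cite{LivTem},\cite{Tsp}) is that the $V^{q'}$ factor is the per-round length coming from Theorem \ref{T2.3}'s exponential rate, while the number of rounds is controlled by {\bf A2} alone — i.e. by $U$, since $U$ bounds the ratio $\|f_A\|/\|f_{A'}\|$-type quantities that decide when the greedy algorithm has "found" enough of the support — so the two factors multiply as $V^{q'}\cdot\ln(U+1)$ rather than $V^{q'}\cdot\ln(V+1)$. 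A secondary technical point is the hypothesis $rq'\ge 1$ (in force by the remark preceding the theorem): it guarantees $K^{rq'}\ge K$, so that the "capture" set sizes $|A|\le K$ occurring throughout satisfy $|A|^{rq'}\le K^{rq'}$, which is what lets all the per-step estimates be bounded uniformly by the single quantity $K^{rq'}$. Finally I would check the side condition $K + C(t,\gamma,q)V^{q'}\ln(U+1)K^{rq'}\le D$ is exactly what is needed so that every application of {\bf A3}/{\bf A2} stays within the allowed depth $D$ (the sets $A\cup\Lambda$ never exceed $D$ in cardinality), and then conclude by the single final application of Theorem \ref{T2.3} described above.
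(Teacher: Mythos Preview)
Your core idea matches the paper: the proof of Theorem~\ref{T2.6} is exactly the proof of Theorem~\ref{T2.4}, with Lemma~\ref{L5.1} (which uses {\bf A1} and {\bf A2} together and produces the constant $c_1$ involving $(C_1U)^{q'}$) replaced by its {\bf A3}-analogue Lemma~\ref{L5.2} (which produces $c_2$ involving $V^{q'}$). The place where {\bf A2} enters separately is the inequality $\|f_{\Gamma^{m_L}}\|\le U(\|f_{m_L}\|+\e)$ (formula~\eqref{*3}), used in the support-reduction dichotomy; this is why the final bound reads $CU\e$ and the logarithm is $\ln(U+1)$ rather than $\ln(VK)$ as in Theorem~\ref{T2.5}. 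Your second and third paragraphs capture this correctly.

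However, your opening and closing framing is off. Theorem~\ref{T2.3} plays \emph{no role} in the proof of Theorem~\ref{T2.6}: the ``decay engine'' is Lemma~\ref{L5.2}, not Theorem~\ref{T2.3}, and the counting argument (the $A_j,B_j,L$ machinery from the proof of Theorem~\ref{T2.4}) already terminates with $\|f_N\|\le CU\e$ directly --- no further application of Theorem~\ref{T2.3} is needed. What you describe as ``one final application of Theorem~\ref{T2.3} to drive the error down to $CU\e$'' is incoherent (Theorem~\ref{T2.3} would give $2\e$, not $CU\e$) and in any case belongs to the derivation of Theorem~\ref{T2.8} from Theorem~\ref{T2.6}, not to Theorem~\ref{T2.6} itself. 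Also, your placement of the {\bf A2} step inside the one-step lemma is slightly misplaced: in Lemma~\ref{L5.2} the bound $\|f_{A(m)}\|_1\le V|A|^r(\|f_{m-1}\|+\e)$ comes from {\bf A3} alone, and {\bf A2} is used only afterwards, in the main argument, via~\eqref{*3}. Strip out the Theorem~\ref{T2.3} scaffolding and your plan is the paper's proof.
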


Theorems \ref{T2.5} and \ref{T2.3} imply the following analog of Theorem \ref{T2.1}.
  
  \begin{Theorem}\label{T2.7} Let $X$ be a Banach space with $\rho(u)\le \gamma u^q$, $1<q\le 2$. Suppose $K$-sparse $f^\e$ satisfies {\bf A3}  and $\|f_0-f^\e\|\le \e$. Then the WCGA with weakness parameter $t$ applied to $f_0$ provides
$$
\|f_{C(t,\gamma,q)V^{q'}\ln (VK) K^{rq'}}\| \le C\e\quad\text{for}\quad K+C(t,\gamma,q)V^{q'}\ln (VK) K^{rq'}\le D
$$
with an absolute constant $C$ and $C(t,\gamma,q) = C_2(q)\gamma^{\frac{1}{q-1}}  t^{-q'}$.
\end{Theorem}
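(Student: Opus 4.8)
The plan is to obtain Theorem~\ref{T2.7} by composing Theorem~\ref{T2.5} with Theorem~\ref{T2.3}. Theorem~\ref{T2.5} already produces, after $\asymp V^{q'}\ln(VK)K^{rq'}$ iterations, a residual bounded by $CVK^r\e$; this amplification factor $VK^r$ in front of $\e$ is the only thing separating us from the claim, and the exponential decay supplied by Theorem~\ref{T2.3} can kill it at the cost of only an extra $\asymp V^{q'}\ln(VK)K^{rq'}$ iterations. One cannot apply Theorem~\ref{T2.3} directly from $k=0$, since $\|f_0\|$ need not be comparable to $\e$; using Theorem~\ref{T2.5} first as a bootstrap replaces $\|f_0\|$ by a quantity whose logarithm is $\asymp\ln(VK)$, which is exactly what is absorbed into the iteration count.

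Concretely, I would first let $k_0$ be the ceiling of the iteration count in Theorem~\ref{T2.5}. Since $f^\e$ is $K$-sparse and satisfies {\bf A3}, that theorem gives $\|f_{k_0}\|\le C_4VK^r\e$ with $C_4$ absolute, provided $K+k_0\le D$. Then I would apply Theorem~\ref{T2.3} with $k=k_0$: for any $m_1\ge 0$ with $K+k_0+m_1\le D$,
$$
\|f_{k_0+m_1}\|\le \|f_{k_0}\|\exp\!\Big(-\frac{c_1 m_1}{K^{rq'}}\Big)+2\e\le C_4VK^r\e\,\exp\!\Big(-\frac{c_1 m_1}{K^{rq'}}\Big)+2\e,
$$
where $c_1=t^{q'}/\big(2(16\gamma)^{1/(q-1)}V^{q'}\big)$. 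Choosing $m_1:=\lceil c_1^{-1}K^{rq'}\ln(C_4VK^r)\rceil$ makes the first summand $\le\e$, hence $\|f_{k_0+m_1}\|\le 3\e$.

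It remains to verify that $m:=k_0+m_1$ has the advertised form. Since $c_1^{-1}=2(16\gamma)^{1/(q-1)}V^{q'}t^{-q'}$ and, using $r\le 1$ and $V,K\ge 1$, $\ln(C_4VK^r)\le C_5\ln(VK)$ up to an additive absolute constant (the degenerate regime of bounded $VK$ being absorbed into constants), the quantity $m_1$ is bounded by $C_6(q)\gamma^{1/(q-1)}t^{-q'}V^{q'}K^{rq'}\ln(VK)$, which is of the same order as $k_0$. Relabeling the constant gives $m\le C(t,\gamma,q)V^{q'}\ln(VK)K^{rq'}$ with $C(t,\gamma,q)=C_2(q)\gamma^{1/(q-1)}t^{-q'}$, and the stated hypothesis $K+C(t,\gamma,q)V^{q'}\ln(VK)K^{rq'}\le D$ (with this final $C$) subsumes the two intermediate size conditions $K+k_0\le D$ and $K+m\le D$ invoked above. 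Combining the two steps yields $\|f_m\|\le 3\e$, i.e.\ the assertion with absolute constant $C=3$.

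Since Theorem~\ref{T2.5} and Theorem~\ref{T2.3} are already established, there is no genuine obstacle; the only work is the constant bookkeeping in the last step — matching $\ln(C_4VK^r)$ against $\ln(VK)$, tracking the $(16\gamma)^{1/(q-1)}$ factor through $c_1^{-1}$, and handling the regime where $VK$ is bounded. The conceptual point, and the reason the final order of magnitude is unchanged, is that the decay time $c_1^{-1}K^{rq'}$ multiplied by the logarithm $\ln(VK^r)$ of the amplification factor coming from Theorem~\ref{T2.5} is again $\asymp V^{q'}K^{rq'}\ln(VK)$.
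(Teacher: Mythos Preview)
Your proposal is correct and follows exactly the approach the paper indicates: the paper states that Theorem~\ref{T2.7} is implied by Theorems~\ref{T2.5} and~\ref{T2.3}, and your argument spells out precisely this combination---first running Theorem~\ref{T2.5} to bring the residual down to $CVK^{r}\e$, then using the exponential decay of Theorem~\ref{T2.3} for an additional $\asymp c_1^{-1}K^{rq'}\ln(VK)$ iterations to remove the $VK^{r}$ factor. The bookkeeping you perform (matching $\ln(C_4VK^{r})$ with $\ln(VK)$ and absorbing the bounded-$VK$ regime into constants) is the routine detail the paper omits.
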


The following edition of Theorems \ref{T2.1} and \ref{T2.7} is also useful in applications. It follows from Theorems \ref{T2.6} and \ref{T2.3}.

 \begin{Theorem}\label{T2.8} Let $X$ be a Banach space with $\rho(u)\le \gamma u^q$, $1<q\le 2$. Suppose $K$-sparse $f^\e$ satisfies {\bf A2}, {\bf A3}  and $\|f_0-f^\e\|\le \e$. Then the WCGA with weakness parameter $t$ applied to $f_0$ provides
$$
\|f_{C(t,\gamma,q)V^{q'}\ln (U+1) K^{rq'}}\| \le C\e\quad\text{for}\quad K+C(t,\gamma,q)V^{q'}\ln (U+1) K^{rq'}\le D
$$
with an absolute constant $C$ and $C(t,\gamma,q) = C_2(q)\gamma^{\frac{1}{q-1}}  t^{-q'}$.
\end{Theorem}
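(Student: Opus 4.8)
The plan is to obtain Theorem~\ref{T2.8} by concatenating exactly the two ingredients it is said to rest on: the ``warm start'' of Theorem~\ref{T2.6}, which already after $N_0:=C(t,\gamma,q)V^{q'}\ln(U+1)K^{rq'}$ iterations gives $\|f_{N_0}\|\le CU\e$, and the exponential decay of Theorem~\ref{T2.3}, which once the error has been pushed down to the order of $U\e$ lets us absorb the stray factor $U$ in a controlled number of further iterations. This is the same two-stage device by which Theorem~\ref{T2.1} is deduced from Theorems~\ref{T2.4} and~\ref{T2.3} (and Theorem~\ref{T2.7} from Theorems~\ref{T2.5} and~\ref{T2.3}); here the input hypotheses are {\bf A2} and {\bf A3}, and I note that {\bf A3} by itself already licenses an application of Theorem~\ref{T2.3}.

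First I would invoke Theorem~\ref{T2.6} --- legitimate since $f^\e$ satisfies {\bf A2} and {\bf A3} --- to get $\|f_{N_0}\|\le CU\e$, valid as soon as $K+N_0\le D$. Next I would apply Theorem~\ref{T2.3} with $k=N_0$ and terminal index $m=N_0+L$, choosing $L$ so that $\exp(-c_1L/K^{rq'})\le(CU)^{-1}$, i.e. $L\ge c_1^{-1}K^{rq'}\ln(CU)$; since $c_1^{-1}=2(16\gamma)^{1/(q-1)}V^{q'}t^{-q'}$ one may take $L\asymp\gamma^{1/(q-1)}V^{q'}t^{-q'}K^{rq'}\ln(CU)$. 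Because $U\ge1$ (put $c_i=0$ in {\bf A2}), one has $\ln(CU)\le C''\ln(U+1)$ for an absolute constant $C''$, so $L$ is again of the form $C_3(q)\gamma^{1/(q-1)}t^{-q'}V^{q'}\ln(U+1)K^{rq'}$; thus $m=N_0+L$ has precisely the shape claimed in the theorem after redefining $C_2(q)$, and the side condition $K+m\le D$ subsumes $K+N_0\le D$ since $N_0\le m$. For this $m$, Theorem~\ref{T2.3} gives
$$
\|f_m\|\le\|f_{N_0}\|\exp\!\left(-\frac{c_1L}{K^{rq'}}\right)+2\e\le CU\e\cdot(CU)^{-1}+2\e=3\e,
$$
so the conclusion holds with absolute constant $C=3$.

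There is no real analytic obstacle: all the WCGA-specific work is packaged inside Theorems~\ref{T2.6} and~\ref{T2.3}, and what remains is bookkeeping of the iteration count. The point that needs care is that the number $L$ of clean-up iterations carries its own $V^{q'}$ (coming from $c_1^{-1}$) and its own $\gamma^{1/(q-1)}t^{-q'}$, so it folds into the same functional form as $N_0$ rather than introducing a new dependence, and that the $\ln(CU)$ it produces is dominated by $\ln(U+1)$ via $U\ge1$, so no $\ln V$ or $\ln K$ leaks into the final estimate. One should also observe that Theorem~\ref{T2.3} constrains only the terminal index through $K+m\le D$, so no separate verification of the depth restriction is needed at the intermediate steps.
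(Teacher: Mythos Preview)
Your proposal is correct and follows exactly the approach indicated in the paper: the text states that Theorem~\ref{T2.8} ``follows from Theorems~\ref{T2.6} and~\ref{T2.3}'', in direct analogy with the way Theorem~\ref{T2.1} is obtained from Theorems~\ref{T2.4} and~\ref{T2.3}, and you have supplied precisely the two-stage bookkeeping argument (warm start to $CU\e$, then exponential clean-up via Theorem~\ref{T2.3} to absorb the factor $U$) that this entails. Your verification that the clean-up count $L$ has the same functional form $C(t,\gamma,q)V^{q'}\ln(U+1)K^{rq'}$ as $N_0$, and your observation that $U\ge 1$ keeps $\ln(CU)\ll\ln(U+1)$, are exactly the points one needs to check.
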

 
 \section{Proofs}

 {\bf Proof of Theorem \ref{T2.3}.} We begin with a proof of Theorem \ref{T2.3}.
 \begin{proof}  Let
$$
 f:=f^\e=\sum_{i\in T}x_ig_i,\quad |T|=K,\quad g_i\in \D.
 $$
 Denote by $T^m$ the set of indices of $g_j\in \D$, $j\in T$, picked by the WCGA after $m$ iterations, $\Gamma^m := T\setminus T^m$.
 Denote by $A_1(\D)$ the closure in $X$ of the convex hull of the symmetrized dictionary $\D^\pm:=\{\pm g,g\in D\}$.   We will bound  $\|f_m\|$ from above.  Assume $\|f_{m-1}\|\ge \e$. Let $m>k$. We bound from below
 $$
 S_m:=\sup_{\phi\in A_1(\D)} |F_{f_{m-1}}(\phi)|.
 $$
 Denote $A_m:=\Gamma^{m-1}$. Then
 $$
 S_m \ge F_{f_{m-1}}(f_{A_m}/\|f_{A_m}\|_1),
 $$
 where $\|f_A\|_1 := \sum_{i\in A} |x_i|$. Next, by Lemma 6.9, p. 342, from \cite{Tbook} we obtain
 $$
 F_{f_{m-1}}(f_{A_m}) = F_{f_{m-1}}(f^\e) \ge \|f_{m-1}\|-\e.
 $$
 Thus
 \begin{equation}\label{3.4}
 S_m\ge \|f_{A_m}\|^{-1}_1(\|f_{m-1}\|-\e).
 \end{equation}
 From the definition of the modulus of smoothness we have for any $\la$
\begin{equation}\label{3.4'}
\|f_{m-1}-\la \varphi_m \| +\|f_{m-1}+\la \varphi_m \| \le 2\|f_{m-1}\|\left(1+\rho\left(\frac{\la}{\|f_{m-1}\|}\right)\right)
\end{equation}
and by (1) from the definition of the WCGA and Lemma 6.10 from \cite{Tbook}, p. 343, we get
$$
|F_{f_{m-1}}(\varphi_m)| \ge t \sup_{g\in \D} |F_{f_{m-1}}(g)| =
$$
$$
t\sup_{\phi\in A_1(\D)} |F_{f_{m-1}}(\phi)|=tS_m.
$$
Then either $F_{f_{m-1}}(\varphi_m)\ge tS_m$ or $F_{f_{m-1}}(-\varphi_m)\ge tS_m$. Both cases are treated in the same way. We demonstrate the case $F_{f_{m-1}}(\varphi_m)\ge tS_m$. We have for $\la \ge 0$
$$
\|f_{m-1}+\la \varphi_m\| \ge F_{f_{m-1}}(f_{m-1}+\la\varphi_m)\ge  \|f_{m-1}\| + \la tS_m.
$$
From here and from (\ref{3.4'}) we obtain
 $$
 \|f_m\| \le \|f_{m-1}-\la \varphi_m\| \le \|f_{m-1}\| +\inf_{\la\ge 0} (-\la t S_m + 2\|f_{m-1}\|\rho(\la/\|f_{m-1}\|)).
 $$ We discuss here the case $\rho(u) \le \gamma u^q$. Using (\ref{3.4}) we get
 $$
 \|f_m\| \le \|f_{m-1}\|\left(1-\frac{\la t}{\|f_{A_m}\|_1} +2\gamma \frac{\la^q}{\|f_{m-1}\|^q}\right) + \frac{\e \la t}{\|f_{A_m}\|_1}.
 $$
 Let $\la_1$ be a solution of
 $$
 \frac{\la t}{2\|f_{A_m}\|_1} = 2\gamma \frac{\la^q}{\|f_{m-1}\|^q},\quad \la_1 = \left(\frac{t\|f_{m-1}\|^q}{4\gamma  \|f_{A_m}\|_1}\right)^{\frac{1}{q-1}}.
 $$
 Our assumption (\ref{C3}) gives
 $$
 \|f_{A_m}\|_1= \|(f^\e-G_{m-1})_{A_m}\|_1 \le VK^r\|f^\e-G_{m-1}\|
 $$
 \begin{equation}\label{3.4a}
 \le VK^r(\|f_0-G_{m-1}\|+\|f_0-f^\e\|) \le VK^r(\|f_{m-1}\|+\e).
 \end{equation}
 Specify
 $$
 \la = \left(\frac{t \|f_{A_m}\|_1^{q-1}}{16\gamma (VK^{r})^q}\right)^{\frac{1}{q-1}}.
 $$
 Then, using $\|f_{m-1}\| \ge \e$ we get
 $$
 \left(\frac{\la}{\la_1}\right)^{q-1} = \frac{\|f_{A_m}\|^q_1}{4\|f_{m-1}\|^q (VK^{r})^q} \le 1
 $$
  and  obtain
  \begin{equation}\label{3.4b}
 \|f_m\| \le \|f_{m-1}\| \left( 1- \frac{t^{q'}}{2(16\gamma)^{\frac{1}{q-1}} (VK^r)^{q'}}\right) + \frac{\e t^{q'}}{(16\gamma)^{\frac{1}{q-1}} (VK^r)^{q'}}.
 \end{equation}
 Denote $c_1:= \frac{t^{q'}}{2(16\gamma)^{\frac{1}{q-1}} V^{q'}}$. Then
 $$
 \|f_m\| \le \|f_{k}\|\exp\left(-\frac{c_1(m-k)}{K^{rq'}}\right) + 2\e.
 $$
\end{proof}

{\bf Proof of Theorem \ref{T2.4}.} We proceed to a proof of Theorem \ref{T2.4}. Modifications of this proof which are in a style of the above proof of Theorem \ref{T2.3} give Theorems \ref{T2.5} and \ref{T2.6}.
  \begin{proof} We begin with a brief description of the structure of the proof. We are given $f_0$ and $f:=f^\e$ such that $\|f_0-f\|\le \e$ and $f$ is $K$-sparse satisfying {\bf A1} and {\bf A2}. We apply the WCGA to $f_0$ and control how many dictionary elements $g_i$ from the representation of $f$
  $$
  f:=f^\e:=\sum_{i\in T} x_ig_i
  $$
  are picked up by the WCGA after $m$ iterations. As above denote by 
  $T^m $ the set of indices $i\in T$ such that $g_i$ has been taken by the WCGA at one of the first $m$ iterations.  
  Denote  $\Gamma^m := T\setminus T^m$. It is clear that if $\Gamma^m=\emptyset$ then 
  $\|f_m\|\le \e$ because in this case $f\in \Phi_m$. 
  
  Our analysis goes as follows. For a residual $f_k$ we assume that $\Gamma^k$ is nonempty. Then we prove that after $N(k)$ iterations we arrive at a residual $f_{k'}$, $k'=k+N(k)$, such that either 
  \begin{equation}\label{*} 
  \|f_{k'}\| \le CU\e
  \end{equation}
  or
   \begin{equation}\label{*1} 
   |\Gamma^{k'}| < |\Gamma^k|- 2^{L-2}
\end{equation}
with some natural number $L$. An important fact is that for the number $N(k)$ of iterations we have a bound
 \begin{equation}\label{*2}
 N(k)\le \beta 2^{aL}, \qquad a:=rq'.
 \end{equation}
 
 Next, we prove that if we begin with $k=0$ and apply the above argument to the sequence of residuals $f_0$, $f_{k_1}$, ..., $f_{k_s}$, then after not more than $N:=2^{2a+1}\beta K^a$ iterations, we obtain either $\|f_N\|\le CU\e$ or $\Gamma^N=\emptyset$, which in turn implies that $\|f_N\|\le \e$. 
 
 We now proceed to the detailed argument. The following corollary of (\ref{C2}) will be often used: for $m\le D-K$ and $A\subset \Gamma^m$ we have
\begin{equation}\label{*3} 
 \|f_A\| \le U(\|f_m\|+\e).
 \end{equation}
 It follows from the fact that $f_A -f+G_m$ has the form $\sum_{i\in \Lambda}c_ig_i$ with $\Lambda$ satisfying $|A|+|\Lambda| \le D$, $A\cap\Lambda=\emptyset$, and from our assumption $\|f-f_0\|\le \e$. 
 
 The following lemma plays a key role in the proof. 
 \begin{Lemma}\label{L5.1} Let $f$ satisfy {\bf A1} and {\bf A2} and let $A\subset \Gamma^k$ be nonempty. Denote $B:=\Gamma^k\setminus A$. Then for any $m\in (k,D-K]$ we have either $\|f_{m-1}\|\le \epsilon$ or
  \begin{equation}\label{*4} 
  \|f_m\| \le \|f_{m-1}\|(1-u) + 2u(\|f_B\|+\e),
  \end{equation}
  where 
  $$
  u:=c_1|A|^{-rq'},\qquad c_1:=\frac{t^{q'}}{2(16\gamma)^{\frac{1}{q-1}}(C_1U)^{q'}},
  $$
  with $C_1$ and $U$ from {\bf A1} and {\bf A2}. 
  \end{Lemma}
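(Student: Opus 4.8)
The lemma is the single-iteration step underlying Theorem 2.4, and its proof should closely parallel the one-step estimate already carried out in the proof of Theorem 2.3, but now splitting the "tail" set $\Gamma^{k}$ into the part $A$ we intend to capture and the leftover $B$. The plan is to run the same WCGA geometry: bound from below the quantity $S_m=\sup_{\phi\in A_1(\D)}|F_{f_{m-1}}(\phi)|$ by testing against the normalized element $f_A/\|f_A\|_1$, then feed this into the modulus-of-smoothness inequality \eqref{3.4'} and optimize in $\lambda$.

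First I would fix $m\in(k,D-K]$, assume $\|f_{m-1}\|>\e$ (otherwise the first alternative holds), and note $A\subset\Gamma^{k}\subset\Gamma^{m-1}$ since the captured set only grows; hence $f_A$ is a legitimate test object built from indices not yet chosen. Using $S_m\ge F_{f_{m-1}}(f_A/\|f_A\|_1)$ together with Lemma 6.9 of \cite{Tbook} in the form $F_{f_{m-1}}(f_A)=F_{f_{m-1}}(f^\e)-F_{f_{m-1}}(f_B)\ge \|f_{m-1}\|-\e-\|f_B\|$, I get
$$
S_m\ \ge\ \|f_A\|_1^{-1}\bigl(\|f_{m-1}\|-\e-\|f_B\|\bigr).
$$
Then, exactly as in the Theorem 2.3 argument, $|F_{f_{m-1}}(\varphi_m)|\ge tS_m$ (via Lemma 6.10 of \cite{Tbook}), so $\|f_m\|\le\|f_{m-1}\|+\inf_{\lambda\ge0}\bigl(-\lambda t S_m+2\gamma\lambda^{q}\|f_{m-1}\|^{1-q}\bigr)$. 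The key replacement of the $\ell_1$ bound is now furnished by assumption \textbf{A1}: $\|f_A\|_1\le C_1|A|^{r}\|f_A\|$, and then by \textbf{A2} (in the form \eqref{*3}, applied to $A\subset\Gamma^{m-1}$) $\|f_A\|\le U(\|f_{m-1}\|+\e)$, so that $\|f_A\|_1\le C_1 U|A|^{r}(\|f_{m-1}\|+\e)$. This is the analogue of \eqref{3.4a}, with $VK^{r}$ replaced by $C_1U|A|^{r}$, which is precisely why $c_1$ acquires the factor $(C_1U)^{-q'}$ and the exponent $|A|^{-rq'}$.

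The remaining work is the same $\lambda$-optimization as in \eqref{3.4b}: choose $\lambda_1$ balancing the two terms in the infimum and then pick the explicit $\lambda=\bigl(t\|f_A\|_1^{q-1}/(16\gamma (C_1U|A|^{r})^{q})\bigr)^{1/(q-1)}$, check $(\lambda/\lambda_1)^{q-1}\le1$ using $\|f_{m-1}\|\ge\e$, and collect terms. One gets
$$
\|f_m\|\ \le\ \|f_{m-1}\|(1-u)\ +\ \frac{t^{q'}}{(16\gamma)^{1/(q-1)}(C_1U|A|^{r})^{q'}}\bigl(\e+\|f_B\|\bigr),
$$
and since the coefficient of $(\e+\|f_B\|)$ equals $2u$ with $u=c_1|A|^{-rq'}$, $c_1=t^{q'}/(2(16\gamma)^{1/(q-1)}(C_1U)^{q'})$, this is exactly \eqref{*4}. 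The only mildly delicate point — and the one I would be most careful about — is the bookkeeping in the lower bound for $S_m$: one must use the norming functional estimate on $f^\e=f_A+f_B$ correctly (splitting off $\|f_B\|$ with the right sign and using $F_{f_{m-1}}(f_0)\ge\|f_{m-1}\|$ up to the $\e$ error from $\|f_0-f^\e\|\le\e$), since it is the appearance of $\|f_B\|$ here that propagates into the $2u(\|f_B\|+\e)$ term in the conclusion; everything else is a verbatim repetition of the computation in the proof of Theorem \ref{T2.3} with $VK^{r}\rightsquigarrow C_1U|A|^{r}$.
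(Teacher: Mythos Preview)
Your overall strategy is exactly the paper's, and the $\lambda$-optimization is carried out correctly. There is, however, one genuine slip. You write ``note $A\subset\Gamma^{k}\subset\Gamma^{m-1}$ since the captured set only grows''. The inclusion goes the other way: since $T^{m-1}\supset T^{k}$, one has $\Gamma^{m-1}=T\setminus T^{m-1}\subset T\setminus T^{k}=\Gamma^{k}$. In particular, for $m>k+1$ the algorithm may already have picked some indices from $A$, so $A\not\subset\Gamma^{m-1}$ in general. This invalidates your appeal to \eqref{*3} for the set $A$: if $i\in A\cap T^{m-1}$ then $g_i$ appears in $G_{m-1}$, so $f_A-(f-G_{m-1})$ is \emph{not} supported on a set disjoint from $A$, and {\bf A2} does not yield $\|f_A\|\le U(\|f_{m-1}\|+\e)$. (One can only get $\|f_A\|\le U(\|f_k\|+\e)$, which is too weak since $\|f_k\|\ge\|f_{m-1}\|$.)

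The fix, which is what the paper does, is to test against $f_{A(m)}$ with $A(m):=A\cap\Gamma^{m-1}$ rather than $f_A$. Since the part $f_{A\setminus A(m)}$ lies in $\Phi_{m-1}$, one still has $F_{f_{m-1}}(f_{A(m)})=F_{f_{m-1}}(f_A)\ge\|f_{m-1}\|-\e-\|f_B\|$; {\bf A1} gives $\|f_{A(m)}\|_1\le C_1|A(m)|^{r}\|f_{A(m)}\|\le C_1|A|^{r}\|f_{A(m)}\|$; and now \eqref{*3} legitimately gives $\|f_{A(m)}\|\le U(\|f_{m-1}\|+\e)$ because $A(m)\subset\Gamma^{m-1}$. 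With $A$ replaced by $A(m)$ in the denominators (and in the choice of $\lambda$), your computation goes through verbatim and yields \eqref{*4}.
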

  \begin{proof}   As above in the proof of Theorem \ref{T2.3} we bound $S_m$ from below. It is clear that $S_m\ge 0$.
 Denote $A(m) := A\cap \Gamma^{m-1}$. Then
 $$
 S_m \ge F_{f_{m-1}}(f_{A(m)}/\|f_{A(m)}\|_1).
 $$
   Next,
 $$
 F_{f_{m-1}}(f_{A(m)}) = F_{f_{m-1}}(f_{A(m)}+f_B-f_B).
 $$
 Then $f_{A(m)}+f_B=f^\e - f_\Lambda$ with $F_{f_{m-1}}(f_\Lambda)=0$. Moreover, it is easy to see that $F_{f_{m-1}}(f^\e)\ge\|f_{m-1}\|-\e$. Therefore,
 $$
 F_{f_{m-1}}(f_{A(m)}+f_B-f_B)\ge \|f_{m-1}\|-\e - \|f_B\|.
 $$
 Thus
 $$
 S_m\ge \|f_{A(m)}\|^{-1}_1\max(0,\|f_{m-1}\|-\e-\|f_B\|).
 $$
 By (\ref{C1}) we get
 $$
 \|f_{A(m)}\|_1 \le C_1 |A(m)|^{r}\|f_{A(m)}\| \le C_1 |A|^{r}\|f_{A(m)}\| .
 $$
 Then
 \begin{equation}\label{3.7}
 S_m \ge \frac{\|f_{m-1}\|-\|f_B\|-\e}{C_1 |A|^{r} \|f_{A(m)}\|}.
 \end{equation}
 From the definition of the modulus of smoothness we have for any $\la$
$$
\|f_{m-1}-\la \varphi_m \| +\|f_{m-1}+\la \varphi_m \| \le 2\|f_{m-1}\|\left(1+\rho\left(\frac{\la}{\|f_{m-1}\|}\right)\right)
$$
and by (1) from the definition of the WCGA and Lemma 6.10 from \cite{Tbook}, p. 343, we get
$$
|F_{f_{m-1}}(\varphi_m)| \ge t \sup_{g\in \D} |F_{f_{m-1}}(g)| =
$$
$$
t\sup_{\phi\in A_1(\D)} |F_{f_{m-1}}(\phi)|.
$$
 From here  we obtain
 $$
 \|f_m\| \le \|f_{m-1}\| +\inf_{\la\ge 0} (-\la t S_m + 2\|f_{m-1}\|\rho(\la/\|f_{m-1}\|)).
 $$
 We discuss here the case $\rho(u) \le \gamma u^q$. Using (\ref{3.7}) we get for any $\lambda\ge 0$
 $$
 \|f_m\| \le \|f_{m-1}\|\left(1-\frac{\la t}{C_1|A|^{r}\|f_{A(m)}\|} +2\gamma \frac{\la^q}{\|f_{m-1}\|^q}\right) + \frac{\la t(\|f_B\|+\e)}{C_1 |A|^{r} \|f_{A(m)}\|}.
 $$
 Let $\la_1$ be a solution of
 $$
 \frac{\la t}{2C_1|A|^{r}\|f_{A(m)}\|} = 2\gamma \frac{\la^q}{\|f_{m-1}\|^q},\quad \la_1 = \left(\frac{t\|f_{m-1}\|^q}{4\gamma C_1 |A|^{r}\|f_{A(m)}\|}\right)^{\frac{1}{q-1}}.
 $$
 Inequality (\ref{*3}) gives
 $$
 \|f_{A(m)}\| \le U(\|f_{m-1}\|+\e).
 $$
 Specify
 $$
 \la = \left(\frac{t \|f_{A(m)}\|^{q-1}}{16\gamma C_1 |A|^{r} U^q}\right)^{\frac{1}{q-1}}.
 $$
 Then $\la \le \la_1$ and we obtain
 \begin{equation}\label{3.8}
 \|f_m\| \le \|f_{m-1}\| \left( 1- \frac{t^{q'}}{2(16\gamma)^{\frac{1}{q-1}} (C_1 U |A|^{r})^{q'}}\right)+ \frac{ t^{q'} (\|f_B\|+\e)}{(16\gamma)^{\frac{1}{q-1}} (C_1 |A|^{r} U)^{q'}}.
 \end{equation}
\end{proof}
  
 For simplicity of notations we consider separately the case $|\Gamma^k|\ge 2$ and the case $|\Gamma^k|=1$. We begin with the generic case  $|\Gamma^k|\ge 2$. We apply Lemma \ref{L5.1} with different pairs $A_j,B_j$, which we now construct. Let $n$ be a natural number such that 
 $$
 2^{n-1} <|\Gamma^k| \le 2^n.
 $$
 For $j=1,2,\dots,n,n+1$ consider the following pairs of sets $A_j,B_j$: $A_{n+1}=\Gamma^k$, $B_{n+1}=\emptyset$; for $j\le n$, $A_j:=\Gamma^k\setminus B_j$ with $B_j\subset \Gamma^k$ is such that $|B_j|\ge |\Gamma^k|-2^{j-1}$ and for any set $J\subset \Gamma^k$ with $|J|\ge |\Gamma^k|-2^{j-1}$ we have
 $$
 \|f_{B_j}\| \le \|f_J\|.
 $$
 We note that the above definition implies that $|A_j|\le 2^{j-1}$ and that if for some $Q\subset \Gamma^k$ we have
 \begin{equation}\label{3.5}
 \|f_Q\| <\|f_{B_j}\|\quad \text{then}\quad |Q|< |\Gamma^k|-2^{j-1}.
 \end{equation}
Set $B_0:=\Gamma^k$. Note that property (\ref{3.5}) is obvious for $j=0$. 

Let $j_0\in[1,n]$ be an index such that if $j_0=1$ then $B_1 \neq \Gamma^k$ and if $j_0\ge 2$ then 
$$
B_1=B_2=\dots=B_{j_0-1} =\Gamma^k,  \qquad B_{j_0} \neq \Gamma^k.
$$
 For a given $b> 1$, to be specified later, denote by $L:=L(b)$ the index such that $(B_0:=\Gamma^k)$
 $$
 \|f_{B_0}\| < b\|f_{B_{j_0}}\|,
 $$
 $$
 \|f_{B_{j_0}}\| < b\|f_{B_{j_0+1}}\|,
 $$
 $$
 \dots
 $$
 $$
 \|f_{B_{L-2}}\| < b\|f_{B_{L-1}}\|,
 $$
 $$
 \|f_{B_{L-1}}\| \ge b\|f_{B_{L}}\|.
 $$
 Then
 \begin{equation}\label{3.6}
 \|f_{B_j}\| \le b^{L-1-j}\|f_{B_{L-1}}\|, \quad j=j_0,\dots,L,
 \end{equation}
 and
 \begin{equation}\label{3.6'}
 \|f_{B_0}\|=\dots=\|f_{B_{j_0-1}}\|\le b^{L-j_0}\|f_{B_{L-1}}\|.
 \end{equation}
Clearly, $L\le n+1$.
     
 Define $m_0:=\dots m_{j_0-1}:= k$ and, inductively,
 $$
 m_j=m_{j-1} + [\beta |A_j|^{rq'}],\quad j=j_0,\dots,L,
 $$
 where $[x]$ denotes the integer part of $x$. The parameter $\beta$ is any, which satisfies the following inequalities with $c_1$ from Lemma \ref{L5.1}
 \begin{equation}\label{3.6''}
 \beta \ge 1,\qquad e^{-c_1\beta/2} <1/2, \qquad 16Ue^{-c_1\beta/2} <1.
 \end{equation}
 We note that the inequality $\beta\ge 1$  implies that 
 $$
 [\beta |A_j|^{rq'}] \ge \beta |A_j|^{rq'}/2.
 $$
 Taking into account that $rq'\ge 1$ and $|A_j|\ge 1$ we obtain
 $$
 m_j\ge m_{j-1}+1.
 $$
 At iterations from $m_{j-1}+1$ to $m_j$ we apply Lemma \ref{L5.1} with $A=A_j$ and obtain from (\ref{*4}) that either $\|f_{m-1}\|\le \epsilon$ or
 $$
 \|f_m\| \le \|f_{m-1}\|(1-u) + 2u(\|f_{B_j}\|+\e),\quad u:= c_1|A_j|^{-rq'}.
 $$
 Using $1-u\le e^{-u}$ and $\sum_{k=0}^\infty (1-u)^k = 1/u$ we derive from here
 \begin{equation}\label{3.6a}
 \|f_{m_j}\| \le \|f_{m_{j-1}}\|e^{-c_1\beta/2}+ 2 (\|f_{B_j}\|+\e).
 \end{equation}
 We continue it up to $j=L$. Denote $\eta:=e^{-c_1\beta/2}$. Then either $\|f_{m_L}\|\le \epsilon$ or
 $$
 \|f_{m_L}\| \le \|f_k\|\eta^{L-j_0+1} +2 \sum_{j=j_0}^L (\|f_{B_j}\|+\e)\eta^{L-j}.
 $$
 We bound the $\|f_k\|$. It follows from the definition of $f_k$ that $\|f_k\|$ is the error of best approximation of $f_0$ by the subspace $\Phi_k$. Representing $f_0=f+f_0-f$ we see that $\|f_k\|$ is not greater than  the error of best approximation of $f$ by the subspace $\Phi_k$ plus $\|f_0-f\|$. This implies $\|f_k\|\le \|f_{B_0}\| +\e$. Therefore we continue
 $$
 \le (\|f_{B_0}\|+\e)\eta^{L-j_0+1} +2\sum_{j=j_0}^L (\|f_{B_{L-1}}\|(\eta b)^{L-j} b^{-1} + \e\eta^{L-j})
 $$
 $$
 \le b^{-1}\|f_{B_{L-1}}\|\left( (\eta b)^{L-j_0+1} + 2 \sum_{j=j_0}^L (\eta b)^{L-j}\right) + \frac{2\e}{1-\eta}.
 $$
Our choice of $\beta$ guarantees $\eta< 1/2$. Choose $b=\frac{1}{2\eta}$. Then
 \begin{equation}\label{3.10}
 \|f_{m_L}\| \le \|f_{B_{L-1}}\|8  e^{-c_1\beta/2}+4\e.
 \end{equation}

 By (\ref{*3}) we get
 $$
 \|f_{\Gamma^{m_L}}\| \le U(\|f_{m_L}\|+\e) \le U(\|f_{B_{L-1}}\|8 e^{-c_1\beta/2} + 5\e).
 $$
 If $\|f_{B_{L-1}}\|\le 10U \e$ then by (\ref{3.10})
 $$
 \|f_{m_L}\| \le CU\e,\qquad C=44.
 $$
 If $\|f_{B_{L-1}}\|\ge 10U\e$ then
 by our choice of $\beta$ we have $16U e^{-c_1\beta/2 }<1$ and
 $$
 U(\|f_{B_{L-1}}\|8e^{-c_1\beta/2} +5\e)< \|f_{B_{L-1}}\|.
 $$
 Therefore
 $$
 \|f_{\Gamma^{m_L}}\|<\|f_{B_{L-1}}\|.
 $$
 This implies
 $$
 |\Gamma^{m_L}| < |\Gamma^k| - 2^{L-2}.
 $$
 
 In the above proof our assumption $j_0\le n$ is equivalent to the assumption that $B_n\neq \Gamma^k$. We now consider the case $B_n=\Gamma^k$ and, therefore, $B_j=\Gamma^k$, $j=0,1,\dots,n$. This means that $\|f_{\Gamma^k}\|\le \|f_J\|$ for any $J$ with $|J|\ge |\Gamma^k|-2^{n-1}$. Therefore, if for some $Q\subset \Gamma^k$ we have 
\begin{equation}\label{3.5n}
 \|f_Q\| <\|f_{\Gamma^k}\|\quad \text{then}\quad |Q|< |\Gamma^k|-2^{n-1}.
 \end{equation}
 In this case we set $m_0:=k$ and 
 $$
 m_1:= k+[\beta |\Gamma^k|^{rq'}].
 $$
 Then by Lemma \ref{L5.1} with $A=\Gamma^k$   we obtain as in (\ref{3.6a})
 \begin{equation}\label{3.6b}
 \|f_{m_1}\| \le \|f_{m_0}\|e^{-c_1\beta/2}+ 2\e \le \|f_{\Gamma^k}\|e^{-c_1\beta/2} +3\e.
 \end{equation}
 By (\ref{*3}) we get
 $$
 \|f_{\Gamma^{m_1}}\| \le U(\|f_{m_1}\|+\e) \le U(\|f_{\Gamma^k}\| e^{-c_1\beta/2} + 4\e) .
 $$
 If $\|f_{\Gamma^k}\|\le 8U \e$ then  by (\ref{3.6b})
 $$
 \|f_{m_1}\| \le 7U\e.
 $$
  If $\|f_{\Gamma^k}\|\ge 8U\e$ then
 by our choice of $\beta$ we have $2U e^{-c_1\beta/2 }<1$ and
 \begin{equation}\label{3.6c}
 \|f_{\Gamma^{m_1}}\| \le U(\|f_{\Gamma^k}\|e^{-c_1\beta/2} +4\e)< \|f_{\Gamma^k}\|  .
 \end{equation}
 This implies
 $$
 |\Gamma^{m_1}| < |\Gamma^k| - 2^{n-1}.
 $$ 
 
 It remains to consider the case $|\Gamma^k|=1$. By the above argument, where we used Lemma \ref{L5.1} with $A=\Gamma^k$ we obtain (\ref{3.6c}). In the case $|\Gamma^k|=1$ inequality (\ref{3.6c}) implies $\Gamma^{m_1}=\emptyset$, which completes the proof in this case. 
 
   We now complete the proof of Theorem \ref{T2.4}. We begin with $f_0$ and apply the above argument (with $k=0$). As a result we either get the required inequality or we reduce the cardinality of support of $f$ from $|T|=K$ to $|\Gamma^{m_{L_1}}|<|T|-2^{L_1-2}$ (the WCGA picks up at least $2^{L_1-2}$ dictionary elements $g_i$ from the representation of $f$), $m_{L_1}\le \beta 2^{aL_1}$, $a:=rq'$. We continue the process and build a sequence $m_{L_j}$ such that $m_{L_j}\le \beta 2^{aL_j}$ and after $m_{L_j}$ iterations we reduce the support by at least $2^{L_j-2}$. We also note that $m_{L_j}\le \beta 2^{2a} K^{a}$. We continue this process till the following inequality is satisfied for the first time
 \begin{equation}\label{3.11}
  m_{L_1}+\dots+m_{L_s}  \ge 2^{2a}\beta K^{a}.
 \end{equation}
 Then, clearly,
 $$
  m_{L_1}+\dots+m_{L_s} \le 2^{2a+1}\beta  K^{a}.
 $$
 Using the inequality
 $$
 (a_1+\cdots +a_s)^{\theta} \le a_1^\theta+\cdots +a_s^\theta,\quad a_j\ge 0,\quad \theta\in (0,1]
 $$
 we derive from (\ref{3.11}) 
 $$
 2^{L_1-2}+\dots+2^{L_s-2} \ge \left(2^{a(L_1-2)}+\dots+2^{a(L_s-2)}\right)^{\frac{1}{a}}
 $$
 $$
 \ge
 2^{-2}\left(2^{aL_1}+\dots+2^{aL_s}\right)^{\frac{1}{a}}
 $$
 $$
 \ge 2^{-2}\left((\beta)^{-1}(m_{L_1}+\dots+m_{L_s})\right)^{\frac{1}{a}}\ge K.
 $$
 Thus, after not more than $N:=2^{2a+1}\beta  K^{a}$ iterations we either get the required inequality or we recover $f$ exactly (the WCGA picks up all the dictionary elements $g_i$ from the representation of $f$) and then $\|f_N\| \le \|f_0-f\|\le \e$.

 \end{proof}
 
 {\bf Proof of Theorem \ref{T2.5}.} We begin with a version of Lemma \ref{L5.1} that is used in this proof. 
 \begin{Lemma}\label{L5.2} Let $f$ satisfy {\bf A3} and let $A\subset \Gamma^k$ be nonempty. Denote $B:=\Gamma^k\setminus A$. Then for any $m\in (k,D-K]$ we have either $\|f_{m-1}\|\le \epsilon$ or
  \begin{equation}\label{A3.1} 
  \|f_m\| \le \|f_{m-1}\|(1-u) + 2u(\|f_B\|+\e),
  \end{equation}
  where 
  $$
  u:=c_2|A|^{-rq'},\qquad c_2:=\frac{t^{q'}}{2(16\gamma)^{\frac{1}{q-1}}V^{q'}},
  $$
  with $r$ and $V$  from {\bf A3}. 
  \end{Lemma}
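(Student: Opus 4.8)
The plan is to mimic the proof of Lemma \ref{L5.1} step for step. The only structural change is that the two devices used there --- property {\bf A1} (to pass from $\|f_{A(m)}\|$ to the coefficient sum $\sum_{i\in A(m)}|x_i|$) together with the inequality (\ref{*3}) (to bound $\|f_{A(m)}\|$ by $U(\|f_{m-1}\|+\e)$) --- are fused into a single application of {\bf A3}, carried out exactly as in the estimate (\ref{3.4a}) in the proof of Theorem \ref{T2.3}. Here the constant $V$ of {\bf A3} takes over the role of the product $C_1U$ from Lemma \ref{L5.1}, which is why $c_2$ carries $V^{q'}$ where $c_1$ carried $(C_1U)^{q'}$.

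First I would reproduce verbatim the opening of the proof of Lemma \ref{L5.1}: with $A(m):=A\cap\Gamma^{m-1}$ and $\|f_{A(m)}\|_1:=\sum_{i\in A(m)}|x_i|$, and using Lemmas 6.9 and 6.10 of \cite{Tbook}, step (1) of the WCGA, and $\|f_0-f^\e\|\le\e$, one obtains
$$
S_m\ge\|f_{A(m)}\|_1^{-1}\max\bigl(0,\ \|f_{m-1}\|-\e-\|f_B\|\bigr),\qquad S_m:=\sup_{\phi\in A_1(\D)}|F_{f_{m-1}}(\phi)|,
$$
no hypothesis on $\D$ entering this step. Then I would estimate $Z:=\|f_{A(m)}\|_1$ by {\bf A3}. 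Writing $f^\e-G_{m-1}=f_{A(m)}-\sum_{i\in\Lambda}c_ig_i$, where $\Lambda$ consists of the indices of $T\setminus A(m)$ together with the indices of the dictionary elements $\varphi_1,\dots,\varphi_{m-1}$ already chosen by the WCGA, one checks that $A(m)\cap\Lambda=\emptyset$ (a chosen $\varphi_j$ has its index outside $\Gamma^{m-1}\supseteq A(m)$) and that $|A(m)|+|\Lambda|\le K+(m-1)\le D$ by $m\le D-K$; hence (\ref{C3}) applies and, using $|A(m)|\le|A|$,
$$
Z\le V|A(m)|^r\|f^\e-G_{m-1}\|\le V|A|^r\bigl(\|f_{m-1}\|+\e\bigr).
$$

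Finally I would run the same one-parameter optimization as in Lemma \ref{L5.1}. We may assume $\|f_{m-1}\|>\e$; if moreover $\|f_{m-1}\|\le\|f_B\|+\e$, then (\ref{A3.1}) follows at once from the monotonicity $\|f_m\|\le\|f_{m-1}\|$ together with $2u(\|f_B\|+\e)\ge u\|f_{m-1}\|$. So suppose $\|f_{m-1}\|>\|f_B\|+\e$; then $\e<\|f_{m-1}\|$ and thus $Z\le 2V|A|^r\|f_{m-1}\|$. From $\rho(u)\le\gamma u^q$ and step (1) of the WCGA one gets, for all $\la\ge0$,
$$
\|f_m\|\le\|f_{m-1}\|\Bigl(1-\frac{\la t}{Z}+2\gamma\frac{\la^q}{\|f_{m-1}\|^q}\Bigr)+\frac{\la t(\|f_B\|+\e)}{Z}.
$$
Taking $\la:=\bigl(tZ^{q-1}/(16\gamma(V|A|^r)^q)\bigr)^{1/(q-1)}$, the bound $Z\le 2V|A|^r\|f_{m-1}\|$ and $q\le2$ give $\la\le\la_1$, where $\la_1$ balances $\tfrac{\la t}{2Z}=2\gamma\tfrac{\la^q}{\|f_{m-1}\|^q}$; hence $2\gamma\la^q/\|f_{m-1}\|^q\le\tfrac{\la t}{2Z}$, the bracket is $\le1-\tfrac{\la t}{2Z}$, and the displayed bound becomes $\|f_m\|\le\|f_{m-1}\|(1-\tfrac{\la t}{2Z})+2\tfrac{\la t}{2Z}(\|f_B\|+\e)$. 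A direct computation gives $\tfrac{\la t}{2Z}=\tfrac{t^{q'}}{2(16\gamma)^{1/(q-1)}(V|A|^r)^{q'}}=c_2|A|^{-rq'}=u$, which is precisely (\ref{A3.1}).

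The only point that requires care --- and it is exactly the ingredient that makes the proof of Theorem \ref{T2.3} work --- is the index bookkeeping in the middle step: one must check that the part of $f^\e-G_{m-1}$ supported off $A(m)$ is genuinely of the form $\sum_{i\in\Lambda}c_ig_i$ with $\Lambda$ disjoint from $A(m)$ and $|A(m)|+|\Lambda|\le D$, so that {\bf A3} may be invoked and yields the factor $|A|^r$ rather than merely $K^r$. Everything after that is the identical optimization already performed in Lemma \ref{L5.1} (and, in the global form, in Theorem \ref{T2.3}), with $V$ in place of $C_1U$.
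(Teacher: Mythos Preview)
Your proposal is correct and follows essentially the same route as the paper's own proof: both obtain the lower bound $S_m\ge\|f_{A(m)}\|_1^{-1}\max(0,\|f_{m-1}\|-\e-\|f_B\|)$ as in Lemma~\ref{L5.1}, bound $\|f_{A(m)}\|_1\le V|A|^r(\|f_{m-1}\|+\e)$ via a single application of {\bf A3} (exactly the device of (\ref{3.4a}) with $K$ replaced by $|A|$), and then rerun the optimization from Theorem~\ref{T2.3} with $\e$ replaced by $\|f_B\|+\e$. Your explicit index bookkeeping for the application of {\bf A3} and your separate treatment of the case $\|f_{m-1}\|\le\|f_B\|+\e$ are welcome clarifications, but do not change the argument.
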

  \begin{proof} The proof is a combination of proofs of Theorem \ref{T2.3} and Lemma \ref{L5.1}. As in the proof of Lemma \ref{L5.1} we denote $A(m):=A\cap\Gamma^{m-1}$ and get
  $$
 S_m\ge \|f_{A(m)}\|^{-1}_1\max(0,\|f_{m-1}\|-\e-\|f_B\|).
 $$
 From here in the same way as in the proof of Theorem \ref{T2.3} we obtain for any $\la\ge0$
 $$
 \|f_m\| \le \|f_{m-1}\|\left(1-\frac{\la t}{\|f_{A(m)}\|_1} +2\gamma \frac{\la^q}{\|f_{m-1}\|^q}\right) + \frac{ \la t(\|f_B\|+\e)}{\|f_{A(m)}\|_1}.
 $$ 
 Using definition of $A(m)$ we bound by {\bf A3}
 $$
 \|f_{A(m)}\|_1 = \sum_{i\in A(m)}|x_i| \le V|A(m)|^r\|f_{A(m)} + f-f_{A(m)} -G_{m-1}\|
 $$
 $$
 \le V|A|^r\|f-G_{m-1}\| \le V|A|^r(\|f_{m-1}\|+\e).
 $$
 This inequality is a variant of inequality (\ref{3.4a}) with $K$ replaced by $|A|$. Arguing as in the proof of Theorem \ref{T2.3} with $K$ replaced by $|A|$ we obtain the required inequality, which is the corresponding modification ($K$ is replaced by $|A|$ and $\e$ is replaced by $\|f_B\|+\e$) of (\ref{3.4b}). 
 
 The rest of the proof repeats the proof of Theorem \ref{T2.4} with the use of Lemma \ref{L5.2} instead of Lemma \ref{L5.1} and with the use of the fact that {\bf A3} implies {\bf A2} with $U=VK^r \le VK$.
 \end{proof}
 
 {\bf Proof of Theorem \ref{T2.6}.} This proof repeats the proof of Theorem \ref{T2.4} with the use of Lemma \ref{L5.2} instead of Lemma \ref{L5.1}.

 \section{Examples}

In this section, following \cite{Tsp}, we discuss applications of Theorems from Section 2 for specific dictionaries $\D$. Mostly, $\D$ will be a basis $\Psi$ for $X$. Because of that we use $m$ instead of $K$ in the notation of sparse approximation.  In some of our examples we take $X=L_p$, $2\le p<\infty$. Then it is known that $\rho(u) \le \gamma u^2$ with $\gamma = (p-1)/2$. In some other examples we take $X=L_p$, $1<p\le 2$. 
Then it is known that $\rho(u) \le \gamma u^{p}$,  with $\gamma = 1/p$.

\begin{Proposition}\label{Example 1.} Let $\Psi$ be a uniformly bounded orthogonal system normalized in $L_p(\Omega)$, $2\le p<\infty$, $\Omega$ is a bounded domain. Then we have 
 \begin{equation}\label{4.4}
\|f_{C(t,p,\Omega)m\ln (m+1)}\|_p \le C\sigma_m(f_0,\Psi)_p .
\end{equation}
\end{Proposition}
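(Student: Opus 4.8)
The plan is to obtain (\ref{4.4}) from Theorem \ref{T2.7} by verifying that the $L_p$-normalized system $\Psi$ has the $\ell_1$ incoherence property {\bf A3} with $r=1/2$ and a constant $V=V(p,\Omega,M)$ independent of the sparsity level, where $M$ is the uniform bound of the underlying $L_2(\Omega)$-orthonormal system $\{u_k\}$ and $\psi_k=u_k/\|u_k\|_p$; since {\bf A3} will hold for all $K$ and all $D$, no finite-depth restriction is incurred and the conclusion will hold for every $m$. The first step is to compare the two normalizations: from $\|u_k\|_2=1$, Hölder's inequality on the bounded domain $\Omega$, and $\|u_k\|_\infty\le M$ one gets $|\Omega|^{1/p-1/2}\le\|u_k\|_p\le\|u_k\|_\infty^{1-2/p}\|u_k\|_2^{2/p}\le M^{1-2/p}$, so the normalizing constants are bounded above and below in terms of $p,\Omega,M$ only, and hence $\|\sum_i c_i\psi_i\|_2^2=\sum_i|c_i|^2\|u_i\|_p^{-2}$ is comparable to $\sum_i|c_i|^2$ with constants of the same type.

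The core step is the verification of {\bf A3}. Given $A\subset B$ with $|A|\le K$ and coefficients $\{c_i\}_{i\in B}$, I would chain four elementary bounds: Cauchy--Schwarz gives $\sum_{i\in A}|c_i|\le|A|^{1/2}\big(\sum_{i\in A}|c_i|^2\big)^{1/2}$; the normalization comparison gives $\big(\sum_{i\in A}|c_i|^2\big)^{1/2}\le C(p,\Omega,M)\,\|\sum_{i\in A}c_i\psi_i\|_2$; orthogonality in $L_2$ gives $\|\sum_{i\in A}c_i\psi_i\|_2\le\|\sum_{i\in B}c_i\psi_i\|_2$; and the embedding $L_p(\Omega)\hookrightarrow L_2(\Omega)$ for $p\ge2$ on a bounded domain gives $\|\sum_{i\in B}c_i\psi_i\|_2\le|\Omega|^{1/2-1/p}\|\sum_{i\in B}c_i\psi_i\|_p$. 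Combining, $\sum_{i\in A}|c_i|\le V|A|^{1/2}\|\sum_{i\in B}c_i\psi_i\|_p$ with $V=V(p,\Omega,M)$, which is exactly {\bf A3} with $r=1/2$.

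To finish, I would apply Theorem \ref{T2.7} with $X=L_p$, $2\le p<\infty$, for which $\rho(u)\le\gamma u^2$ with $\gamma=(p-1)/2$, so $q=q'=2$ and $rq'=1\ge1$ as required, and $K=m$. Writing $\sigma:=\sigma_m(f_0,\Psi)_p$, for each $\e>\sigma$ I would pick an $m$-sparse $f^\e$ with $\|f_0-f^\e\|_p\le\e$; Theorem \ref{T2.7} then produces, after $n:=C(t,\gamma,q)V^{q'}\ln(Vm)\,m^{rq'}\le C(t,p,\Omega)\,m\ln(m+1)$ iterations (a number not depending on $\e$), the bound $\|f_n\|_p\le C\e$ with $C$ absolute; letting $\e\downarrow\sigma$ yields (\ref{4.4}).

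I expect the one subtlety to lie in the choice of hypothesis rather than in any single estimate: the bare $(K,D)$-unconditionality {\bf A2} of $\Psi$ in $L_p$ holds only with a constant of order $m^{1/2-1/p}$, which, fed into Theorem \ref{T2.1}, would inflate the iteration count to a power of $m$; routing instead through {\bf A3} (equivalently, pairing the Nikol'skii-type inequality {\bf A1} of exponent $1/2$ with plain $L_2$ orthogonality) is precisely what keeps $V$ a genuine constant and produces the near-optimal $m\ln(m+1)$ count. The one place uniform boundedness is indispensable is the upper bound $\|u_k\|_p\le M^{1-2/p}$, without which the $L_p$- and $L_2$-normalizations need not be comparable and $V$ could fail to be bounded.
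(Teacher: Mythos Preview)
Your proposal is correct and follows precisely the route indicated in the paper: verify {\bf A3} with $r=1/2$ for the $L_p$-normalized uniformly bounded orthogonal system (via Cauchy--Schwarz, $L_2$-orthogonality, and the H\"older embedding $L_p(\Omega)\hookrightarrow L_2(\Omega)$), then invoke Theorem~\ref{T2.7} with $q=q'=2$ so that $K^{rq'}=K$ and $V^{q'}\ln(VK)\le C(V)\ln(m+1)$. Your closing remark that routing through {\bf A3} rather than {\bf A1}+{\bf A2} is what avoids the spurious $m^{1/2-1/p}$ factor is exactly the point.
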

The proof of Proposition \ref{Example 1.} is based on   Theorem \ref{T2.7}.
\begin{Corollary}\label{Example 2.} Let $\Psi$ be the normalized in $L_p$, $2\le p<\infty$, real $d$-variate trigonometric
system. Then Proposition \ref{Example 1.} applies and gives  
for any $f_0\in L_p$
\begin{equation}\label{4.1}
\|f_{C(t,p,d)m\ln (m+1)}\|_p \le C\sigma_m(f_0,\Psi)_p .
\end{equation}
\end{Corollary}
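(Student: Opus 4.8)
The plan is to check that the normalized real $d$-variate trigonometric system $\Psi={\mathcal R}{\mathcal T}_p^d$ satisfies, verbatim, the hypotheses of Proposition \ref{Example 1.} with $\Omega=[0,1]^d$, and then simply to quote that proposition. Two properties must be confirmed: that $\Psi$ is an orthogonal system, and that it is uniformly bounded as a system normalized in $L_p(\Omega)$. This is the precise sense in which the corollary is a special case of the preceding proposition.

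Orthogonality in $L_2([0,1]^d)$ is inherited from the univariate real trigonometric system by forming tensor products. For the uniform bound, recall that every element of $\Psi$ is a tensor product $\prod_{j=1}^d\psi_j(x_j)$ with each factor $\psi_j$ a univariate function from ${\mathcal R}{\mathcal T}_p$. Since $\|\prod_{j}\psi_j\|_{L_p([0,1]^d)}=\prod_{j}\|\psi_j\|_{L_p([0,1])}=1$, the system is genuinely normalized in $L_p([0,1]^d)$; and since, by periodicity, $\int_0^1|\cos 2\pi kx|^p\,dx$ and $\int_0^1|\sin 2\pi kx|^p\,dx$ are positive constants independent of the integer $k\neq 0$, each univariate factor obeys $\|\psi_j\|_{L_\infty[0,1]}\le c(p)$ for a constant $c(p)$, whence $\|\prod_{j}\psi_j\|_{L_\infty([0,1]^d)}\le c(p)^d=:M(p,d)$ uniformly over all frequency vectors. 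Thus $\Psi$ is a uniformly bounded orthogonal system normalized in $L_p(\Omega)$, $\Omega=[0,1]^d$.

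Now Proposition \ref{Example 1.} applies directly to $\Psi$, and its conclusion, $\|f_{C(t,p,\Omega)m\ln(m+1)}\|_p\le C\sigma_m(f_0,\Psi)_p$, is precisely the asserted inequality once one records that here $\Omega=[0,1]^d$ is determined by $d$ and the uniform bound $M$ depends only on $p$ and $d$, so that $C(t,p,\Omega)$ may be written $C(t,p,d)$. I expect no genuine obstacle at the level of the corollary: all the analytic work is already inside Proposition \ref{Example 1.}, which runs the WCGA via Theorem \ref{T2.7} after verifying assumption {\bf A3} for trigonometric polynomials — namely $\sum_{i\in A}|c_i|\le V|A|^{1/2}\|\sum_{i\in B}c_ig_i\|_p$ with $V=V(p,d)$, obtained from Cauchy--Schwarz, $L_2$-orthogonality, and the inequality $\|h\|_2\le\|h\|_p$ valid on the probability space $([0,1]^d,dx)$ for $p\ge 2$, so that $r=1/2$, $q'=2$, $rq'=1$ and the iteration count $V^{q'}\ln(VK)K^{rq'}$ collapses to $\asymp m\ln(m+1)$. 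The only point that really deserves a moment's care — and the one isolated in the second paragraph — is the two-sided control of the $L_p$-normalization: that the normalizing constants of the univariate cosines and sines are positive and uniformly bounded in the frequency, so that passing to the normalized system destroys neither orthogonality nor the uniform $L_\infty$ bound.
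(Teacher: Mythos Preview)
Your proposal is correct and matches the paper's approach exactly: the paper treats this corollary as an immediate specialization of Proposition~\ref{Example 1.}, offering no separate proof, so all that is required is the verification you give that ${\mathcal R}{\mathcal T}_p^d$ is a uniformly bounded orthogonal system normalized in $L_p([0,1]^d)$. Your additional remarks on how Proposition~\ref{Example 1.} itself is proved via Theorem~\ref{T2.7} with $r=1/2$ are also in line with the paper's indications.
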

We note that (\ref{4.1}) provides some progress in Open Problem 7.1 (p. 91) from \cite{T18}.

\begin{Proposition}\label{Example 1q.} Let $\Psi$ be a uniformly bounded orthogonal system normalized in $L_p(\Omega)$, $1< p\le 2$, $\Omega$ is a bounded domain. Then we have
 \begin{equation}\label{4.4q}
\|f_{C(t,p,\Omega)m^{p'-1}\ln (m+1)}\|_p \le C\sigma_m(f_0,\Psi)_p .
\end{equation}
\end{Proposition}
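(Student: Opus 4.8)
The plan is to deduce Proposition~\ref{Example 1q.} from Theorem~\ref{T2.7}, in exactly the way Proposition~\ref{Example 1.} is deduced from it, the only changes being the value of the smoothness exponent $q$ and of the exponent $r$ in assumption {\bf A3}. Since $1<p\le 2$, the space $X=L_p(\Omega)$ satisfies $\rho(u)\le\gamma u^q$ with $q=p$ and $\gamma=1/p$, hence $q'=p'=p/(p-1)$. The standing hypothesis $rq'\ge 1$ will be met because the value of $r$ obtained below is $r=1/p$, and then $rq'=1/(p-1)=p'-1\ge 1$ for $p\le 2$.

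The crucial step is to check that $\Psi=\{\psi_k\}$, and hence every $m$-sparse element, has the $\ell_1$ incoherence property {\bf A3} with $r=1/p$. Here $\|\psi_k\|_p=1$, the $\psi_k$ are pairwise orthogonal in $L_2(\Omega)$, and, because $\Psi$ is the $L_p$-normalization of a uniformly bounded $L_2$-orthonormal system and $|\Omega|<\infty$, one has $\|\psi_k\|_2\asymp 1$ and $\sup_k\|\psi_k\|_\infty<\infty$, with constants controlled by the uniform bound $M$ and by $|\Omega|$ (for the trigonometric system all these are absolute). Given $g=\sum_{i\in B}c_i\psi_i$ and $A\subset B$, from $c_i\|\psi_i\|_2^2=\langle g,\psi_i\rangle$ and duality,
$$
\sum_{i\in A}|c_i|\ \le\ C(M,\Omega)\,\Big\langle g,\sum_{i\in A}(\sign c_i)\,\psi_i\Big\rangle\ \le\ C(M,\Omega)\,\|g\|_p\,\Big\|\sum_{i\in A}(\sign c_i)\,\psi_i\Big\|_{p'}.
$$
Since $p'\ge 2$, the interpolation inequality $\|h\|_{p'}\le\|h\|_2^{2/p'}\|h\|_\infty^{1-2/p'}$ applied to $h:=\sum_{i\in A}(\sign c_i)\psi_i$, together with $\|h\|_2\asymp|A|^{1/2}$ (pairwise orthogonality) and $\|h\|_\infty\le C(M,\Omega)|A|$, gives $\|h\|_{p'}\le C(M,\Omega)\,|A|^{1/p'}|A|^{1-2/p'}=C(M,\Omega)\,|A|^{1/p}$. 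Thus {\bf A3} holds with $r=1/p$ and $V=C(M,\Omega)$; and since the estimate never used $|B|\le D$, the depth $D$ may be taken arbitrarily large. Note $rq'=p'-1\ge1$, as required.

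It remains to invoke Theorem~\ref{T2.7} with $K=m$, $q=p$, $\gamma=1/p$, $r=1/p$, $V=C(M,\Omega)$, and with $D$ chosen larger than $m$ plus the iteration count so that the side condition holds. It produces, after
$$
C(t,p)\,V^{p'}\ln(Vm)\,m^{p'-1}\ \le\ C(t,p,\Omega)\,m^{p'-1}\ln(m+1)
$$
iterations, a residual with $\|f_N\|_p\le C\e$ whenever $\|f_0-f^\e\|_p\le\e$ with $f^\e\in\Sigma_m(\Psi)$. For every $\e>\sigma_m(f_0,\Psi)_p$ such an $f^\e$ exists, while the iteration count $N$ does not depend on $\e$; letting $\e\downarrow\sigma_m(f_0,\Psi)_p$ yields (\ref{4.4q}).

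I expect the only genuine (but purely technical) obstacle to be the bookkeeping of how $V$, and hence the logarithmic and constant factors in the iteration count, depend on $M$ and $|\Omega|$ — in particular writing $\ln(Vm)\le C(M,\Omega)\ln(m+1)$ for $m\ge1$. The rest is a line-by-line transcription of the proof of Proposition~\ref{Example 1.}, with $q=2$ replaced by $q=p$ and $r=\tfrac12$ replaced by $r=\tfrac1p$.
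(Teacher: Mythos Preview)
Your proposal is correct and follows exactly the route indicated in the paper: verify {\bf A3} for $\Psi$ with $r=1/p$ and $V=C(M,\Omega)$ via H\"older duality and the interpolation bound $\|h\|_{p'}\le\|h\|_2^{2/p'}\|h\|_\infty^{1-2/p'}$, then apply Theorem~\ref{T2.7} with $q=p$, $q'=p'$, so that $K^{rq'}=m^{p'-1}$. The paper itself only states that the proof is based on Theorem~\ref{T2.7}, and your filling-in of the details (including the check $rq'=p'-1\ge1$ and the absorption of $\ln(Vm)$ into $C(t,p,\Omega)\ln(m+1)$) is precisely what is needed.
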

The proof of Proposition \ref{Example 1q.} is based on   Theorem \ref{T2.7}.

\begin{Corollary}\label{Example 2q.} Let $\Psi$ be the normalized in $L_p$, $1<p\le 2$, real $d$-variate trigonometric
system. Then Proposition \ref{Example 1q.} applies and gives  
for any $f_0\in L_p$
\begin{equation}\label{4.1q}
\|f_{C(t,p,d)m^{p'-1}\ln (m+1)}\|_p \le C\sigma_m(f_0,\Psi)_p .
\end{equation}
\end{Corollary}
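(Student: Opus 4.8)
The plan is to deduce Corollary~\ref{Example 2q.} from Proposition~\ref{Example 1q.}, exactly in the way Corollary~\ref{Example 2.} is deduced from Proposition~\ref{Example 1.}. First I would observe that the real $d$-variate trigonometric system ${\mathcal R}{\mathcal T}_p^d$, normalized in $L_p([0,1]^d)$, is an orthogonal system (up to the usual normalization the functions $\prod_{j} e_{k_j}$ are mutually orthogonal in $L_2$), and that it is uniformly bounded: each univariate normalized trigonometric function $\sin 2\pi kx$, $\cos 2\pi kx$ has $L_\infty$ norm of order one when normalized in $L_p$ for a fixed $p$, since $\|\sin 2\pi k x\|_p \asymp 1$. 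Taking tensor products over the $d$ coordinates keeps the $L_\infty$ bound uniform, with a constant depending only on $p$ and $d$. Hence ${\mathcal R}{\mathcal T}_p^d$ satisfies the hypotheses of Proposition~\ref{Example 1q.} with $\Omega=[0,1]^d$.

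Next I would simply invoke Proposition~\ref{Example 1q.} with $\Psi={\mathcal R}{\mathcal T}_p^d$ and $\Omega=[0,1]^d$. Its conclusion is precisely
$$
\|f_{C(t,p,\Omega)m^{p'-1}\ln(m+1)}\|_p \le C\sigma_m(f_0,\Psi)_p,
$$
and since here $\Omega=[0,1]^d$ depends only on $d$, the constant $C(t,p,\Omega)$ becomes a constant $C(t,p,d)$, which yields \eqref{4.1q} for every $f_0\in L_p$. This is the entire argument; there is essentially no obstacle beyond checking the two structural facts (orthogonality and uniform boundedness of the normalized system) that let Proposition~\ref{Example 1q.} apply.

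If one wanted to be careful about the normalization constants, the only mildly technical point is verifying $\|\cos 2\pi k x\|_{L_p[0,1]}$ and $\|\sin 2\pi k x\|_{L_p[0,1]}$ are bounded above and below by absolute constants for all $k\ge1$ (and equal to $1$ for the constant function after its own normalization), so that dividing by these $L_p$ norms produces functions with $\|\cdot\|_\infty \le C(p)$. This is a standard computation: by periodicity these $L_p$ norms are independent of $k\ge 1$ and equal to $\|\cos 2\pi x\|_p$, a fixed positive number. The $d$-fold tensor structure then multiplies these bounds, giving the uniform bound $C(p,d)$. With that in hand, Proposition~\ref{Example 1q.} delivers the claim verbatim, so I expect the "hard part" to be purely bookkeeping rather than any genuine difficulty — the real content sits in Theorem~\ref{T2.7} and Proposition~\ref{Example 1q.}, which we are allowed to assume.
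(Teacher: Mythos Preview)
Your proposal is correct and matches the paper's approach exactly: the paper gives no separate proof of Corollary~\ref{Example 2q.}, simply stating that Proposition~\ref{Example 1q.} applies to the normalized real $d$-variate trigonometric system. Your verification that this system is orthogonal and uniformly bounded (with the constant depending on $p$ and $d$ via the tensor structure) is precisely the bookkeeping needed to justify that application, and nothing more is required.
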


\begin{Proposition}\label{Example 4.} Let $\Psi$ be the normalized in $L_p$, $2\le p<\infty$, multivariate Haar basis 
${\mathcal H}^d_p={\mathcal H}_p\times\cdots\times {\mathcal H}_p$. 
Then 
\begin{equation}\label{4.2}
\|f_{C(t,p,d)m^{2/p'}}\|_p \le C\sigma_m(f_0,{\mathcal H}^d_p)_p .
\end{equation}
\end{Proposition}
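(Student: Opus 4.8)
The plan is to read the statement off from Theorem \ref{T2.4}, applied in $X=L_p([0,1]^d)$ with $2\le p<\infty$, where $\rho(u)\le\gamma u^2$ with $\gamma=(p-1)/2$, so $q=2$ and $q'=2$. Everything reduces to verifying that the $L_p$-normalized multivariate Haar system ${\mathcal H}^d_p$ satisfies {\bf A1} with exponent $r=1/p'$ and {\bf A2}, with constants depending only on $p$ and $d$ --- crucially \emph{not} on the sparsity $K=m$ nor on any depth $D$. Granting this, one takes $D=+\infty$ (so the side condition in Theorem \ref{T2.4} is vacuous), notes $rq'=2/p'\ge1$ since $p\ge2$, and gets $\|f_{C(t,p,d)m^{2/p'}}\|_p\le C(p,d)\,\e$ for every $f_0$ and every $m$-sparse $f^\e$ with $\|f_0-f^\e\|_p\le\e$. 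Since the number of iterations furnished by Theorem \ref{T2.4} depends only on $K,U,C_1,t,\gamma$ and not on $\e$, one may take $f^\e\in\Sigma_m({\mathcal H}^d_p)$ with $\|f_0-f^\e\|_p\le\sigma_m(f_0,{\mathcal H}^d_p)_p+\delta$ and let $\delta\downarrow0$, obtaining (\ref{4.2}).

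Checking {\bf A2} is classical: ${\mathcal H}_p$ is an unconditional basis of $L_p([0,1])$ for $1<p<\infty$, hence the $d$-fold tensor product ${\mathcal H}^d_p$ is an unconditional basis of $L_p([0,1]^d)$ with some constant $U=U(p,d)$ (one may take $U(p)^d$ with $U(p)$ the univariate constant); this gives the $(K,D)$-unconditionality in {\bf A2} for all $K\le D$ with this single $U$. For {\bf A1} I would use that $L_p([0,1]^d)$ has cotype $p$ for $2\le p<\infty$ with a cotype constant $c(p)$ independent of $d$; combined with unconditionality this yields a lower $\ell_p$-estimate for the Haar system: for every finite index set $A$ and coefficients $(x_i)_{i\in A}$, using $\|g_i\|_p=1$ and that every sign change is bounded by $U(p,d)$ on $L_p$,
\[
\Bigl(\sum_{i\in A}|x_i|^p\Bigr)^{1/p}\le c(p)\Bigl(\mathbb E_\varepsilon\Bigl\|\sum_{i\in A}\varepsilon_i x_i g_i\Bigr\|_p^2\Bigr)^{1/2}\le c(p)\,U(p,d)\,\Bigl\|\sum_{i\in A}x_i g_i\Bigr\|_p .
\]
H\"older's inequality then gives $\sum_{i\in A}|x_i|\le|A|^{1/p'}\bigl(\sum_{i\in A}|x_i|^p\bigr)^{1/p}\le C_1(p,d)|A|^{1/p'}\|f_A\|_p$, i.e.\ {\bf A1} with $r=1/p'$ and $C_1=C_1(p,d)=c(p)U(p,d)$, valid for every $K$-sparse element (indeed for every finite sum). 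If one wishes to avoid invoking cotype, the same lower $\ell_p$-bound can be extracted directly from the Littlewood--Paley/square-function description of $\|\cdot\|_p$ for Haar expansions, exploiting the nesting of dyadic supports; the cotype route is just shorter.

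Two points deserve emphasis. First, one must use the {\bf A1}+{\bf A2} form of the Lebesgue-type inequality (Theorem \ref{T2.4}) rather than the {\bf A3} form (Theorem \ref{T2.7}): the latter carries a factor $\ln(VK)$, and since here $V\asymp C_1U$ is merely a constant, $\ln(VK)\asymp\ln K$ would reinstate the very logarithm that (\ref{4.2}) avoids. It is precisely the $K$-independence of the unconditionality constant of the Haar basis that lets the factor $\ln(U+1)$ in Theorem \ref{T2.4} be absorbed into $C(t,p,d)$ --- in contrast with the trigonometric case, where the relevant constant grows with $K$. Second, tracking constants in Theorem \ref{T2.4} with $q=2$, $\gamma=(p-1)/2$, $C_1=C_1(p,d)$ gives the iteration count $C'U^{q'}\ln(U+1)K^{rq'}=C(t,p,d)K^{2/p'}$ with $C'=C_2(2)\gamma C_1^2 t^{-2}$, exactly matching the statement. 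The only genuinely nontrivial ingredient is the lower $\ell_p$-estimate behind {\bf A1}; the rest is classical facts about the Haar system plus bookkeeping in Theorem \ref{T2.4}, so that is where I expect the real work to lie.
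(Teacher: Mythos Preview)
Your proposal is correct and matches the paper's approach: the paper simply records that the proof is based on Theorem \ref{T2.4}, and you have supplied exactly the missing verification, namely that ${\mathcal H}^d_p$ satisfies {\bf A2} with a constant $U=U(p,d)$ independent of $K$ (unconditionality of the tensor Haar basis) and {\bf A1} with $r=1/p'$ and $C_1=C_1(p,d)$ (via cotype $p$ of $L_p$ for $p\ge2$ combined with unconditionality, or equivalently via the Littlewood--Paley square function), together with the check $rq'=2/p'\ge1$. Your observation that one must use Theorem \ref{T2.4} rather than Theorem \ref{T2.7} so that $\ln(U+1)$ is absorbed into $C(t,p,d)$, avoiding the spurious $\ln K$, is precisely the point.
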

The proof of Proposition \ref{Example 4.} is based on Theorem \ref{T2.4}.
Inequality (\ref{4.2}) provides some progress in Open Problem 7.2 (p. 91) from \cite{T18} in the case $2<p<\infty$. 

 \begin{Proposition}\label{Example 4q.} Let $\Psi$ be the normalized in $L_p$, $1<p\le 2$, univariate Haar basis 
${\mathcal H}_p=\{H_{I,p}\}_I$, where $H_{I,p}$ the Haar functions indexed by dyadic intervals of support of $H_{I,p}$ (we index function $1$ by $[0,1]$ and the first Haar function by $[0,1)$). Then    
\begin{equation}\label{4.2q}
\|f_{C(t,p)m}\|_p \le C\sigma_m(f_0,{\mathcal H}_p)_p .
\end{equation}
\end{Proposition}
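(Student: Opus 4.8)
The plan is to verify that the $L_p$-normalized Haar basis ${\mathcal H}_p$ satisfies assumptions {\bf A1} and {\bf A2} from Section 2 with parameters depending only on $p$, and then to apply Theorem \ref{T2.4}. Since $X=L_p$ with $1<p\le 2$ satisfies $\rho(u)\le\gamma u^p$, $\gamma=1/p$, we use Theorem \ref{T2.4} with $q=p$, hence $q'=p'$. Property {\bf A2}, i.e. $(K,D)$-unconditionality, is just the classical unconditionality of the Haar system in $L_p$, $1<p<\infty$: it holds for all $K\le D$ with a constant $U=U(p)$, so $D$ may be taken arbitrarily large and the side condition in Theorem \ref{T2.4} is automatically met. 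The crux is property {\bf A1}, the Nikol'skii-type $\ell_1L_p$ inequality with parameter $r=1/p'$, namely
$$
\sum_{I\in A}|x_I|\le C_1(p)\,|A|^{1/p'}\Big\|\sum_{I\in A}x_I H_{I,p}\Big\|_p
$$
for every finite family $A$ of dyadic intervals and all coefficients. Note that $r=1/p'$ gives $rq'=1$, so the standing hypothesis $rq'\ge 1$ holds (with equality). Granting {\bf A1} and {\bf A2} and applying Theorem \ref{T2.4} to $f_0$ with $K=m$ and any $\e>\sigma_m(f_0,{\mathcal H}_p)_p$ (choose an $m$-sparse $f^\e$ with $\|f_0-f^\e\|_p\le\e$), we get $\|f_N\|_p\le CU\e$ with $N=C'U^{q'}\ln(U+1)K^{rq'}=C(t,p)\,m$; letting $\e$ decrease to $\sigma_m(f_0,{\mathcal H}_p)_p$ yields (\ref{4.2q}), just as Proposition \ref{Example 4.} is deduced from Theorem \ref{T2.4}.

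To prove {\bf A1} I would use the biorthogonality of ${\mathcal H}_p$ and the $L_{p'}$-normalized Haar system $\{H_{I,p'}\}$: since $1/p+1/p'=1$, $\langle H_{I,p},H_{J,p'}\rangle=\delta_{I,J}$ (the constant function being identified with the Haar function of $[0,1]$). Writing $F:=\sum_{I\in A}x_I H_{I,p}$ and $G:=\sum_{I\in A}\sign(x_I)H_{I,p'}$, Hölder's inequality gives
$$
\sum_{I\in A}|x_I|=\langle F,G\rangle\le\|F\|_p\,\|G\|_{p'},
$$
so it remains to bound $\|G\|_{p'}$, and more generally $\big\|\sum_{I\in A}\theta_I H_{I,p'}\big\|_{p'}$ for arbitrary signs $\theta_I=\pm1$, by $C_p|A|^{1/p'}$. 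By unconditionality of the Haar basis in $L_{p'}$ together with Khintchine's inequality (averaging over the signs), $\big\|\sum_{I\in A}\theta_I H_{I,p'}\big\|_{p'}\le C_p\big\|(\sum_{I\in A}|H_{I,p'}|^2)^{1/2}\big\|_{p'}$; since $|H_{I,p'}|^2=|I|^{-2/p'}\mathbf 1_I$, it suffices to prove the Carleson-type estimate
$$
\int_0^1\Big(\sum_{I\in A}|I|^{-2/p'}\mathbf 1_I(x)\Big)^{p'/2}dx\le C_p\,|A|.
$$

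This last inequality is the only genuinely nontrivial point, and I would argue as follows. For fixed $x$ the intervals $I\in A$ with $x\in I$ form a chain, so $\sum_{I\in A,\ x\in I}|I|^{-2/p'}$ is a sum of distinct powers of the fixed base $2^{2/p'}>1$ and hence is at most $C_p$ times its largest term $|I_{\min}(x)|^{-2/p'}$, where $I_{\min}(x)$ denotes the smallest member of $A$ containing $x$ (the sum being $0$ if there is none). Raising to the power $p'/2$ (and using $(-2/p')(p'/2)=-1$) and integrating,
$$
\int_0^1\Big(\sum_{I\in A}|I|^{-2/p'}\mathbf 1_I\Big)^{p'/2}\le C_p\int_0^1|I_{\min}(x)|^{-1}dx=C_p\sum_{J\in A}|J|^{-1}\big|\{x:I_{\min}(x)=J\}\big|\le C_p\sum_{J\in A}1=C_p|A|,
$$
since $\{x:I_{\min}(x)=J\}\subseteq J$. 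Combining the three displays gives $\|G\|_{p'}\le C_p|A|^{1/p'}$, hence {\bf A1} with $r=1/p'$ and $C_1=C_1(p)$. The main obstacle is this chain of square-function and combinatorial estimates (and the bookkeeping that all constants depend only on $p$); once {\bf A1} and {\bf A2} are established, (\ref{4.2q}) follows by feeding them into Theorem \ref{T2.4} exactly as in the proof of Proposition \ref{Example 4.}.
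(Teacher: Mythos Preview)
Your proof is correct and follows essentially the same route as the paper. The only cosmetic difference is that the paper cites Theorem~\ref{T2.8} (assumptions {\bf A2}+{\bf A3}) whereas you invoke Theorem~\ref{T2.4} (assumptions {\bf A1}+{\bf A2}); since {\bf A1}+{\bf A2} imply {\bf A3} with $V=C_1U$, and since here $U=U(p)$ is an absolute constant, the two formulations give the same conclusion $\|f_{C(t,p)m}\|_p\le C\sigma_m(f_0,{\mathcal H}_p)_p$. Your verification of {\bf A1} with $r=1/p'$ via duality, the Haar square function in $L_{p'}$, and the chain/geometric-series bound $\sum_{I\ni x,\,I\in A}|I|^{-2/p'}\le C_p|I_{\min}(x)|^{-2/p'}$ followed by $\int|I_{\min}(x)|^{-1}\,dx\le |A|$ is exactly the right argument and is what underlies the reference \cite{Tsp}.
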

The proof of Proposition \ref{Example 4q.} is based on Theorem \ref{T2.8}.
Inequality (\ref{4.2q}) solves the Open Problem 7.2 (p. 91) from \cite{T18} in the case $1<p\le 2$.
 
 \begin{Proposition}\label{Example 5.} Let $X$ be a Banach space with $\rho(u)\le \gamma u^2$. Assume that $\Psi$ is a normalized Schauder basis for $X$. Then     
 \begin{equation}\label{4.5}
\|f_{C(t,X,\Psi)m^2\ln m}\| \le C\sigma_m(f_0,\Psi) .
\end{equation}
\end{Proposition}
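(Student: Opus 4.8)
The plan is to deduce \eqref{4.5} from Theorem \ref{T2.7} in the case $q=2$. The only thing that needs to be checked is that an arbitrary normalized Schauder basis $\Psi$ automatically enjoys the $\ell_1$ incoherence property \textbf{A3} with exponent $r=1$, a constant $V$ depending only on the basis, and arbitrary depth $D$. The point to keep in mind is that \textbf{A3}, unlike \textbf{A2}, is a one-sided estimate: it bounds the $\ell_1$ norm of the coefficients on a small index set $A$ by the $X$-norm of a (possibly larger) linear combination, and such a bound does not require any unconditionality --- it is exactly the kind of estimate furnished by the boundedness of the coordinate functionals of a Schauder basis. This is why Theorem \ref{T2.7}, which runs on \textbf{A3} alone, is the right tool, whereas Theorem \ref{T2.4} (which uses \textbf{A1} and \textbf{A2}) is not available, since a Schauder basis need not be unconditional.

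First I would record the standard fact that if $\Psi=\{\psi_k\}$ is a Schauder basis of $X$ with basis constant $K_b$ then the biorthogonal functionals $\{\psi_k^*\}$ satisfy $\|\psi_k^*\|_{X^*}\,\|\psi_k\|\le 2K_b$; since $\Psi$ is normalized this reads $\|\psi_k^*\|_{X^*}\le 2K_b$. Then, for any finite index set $B$, any scalars $\{c_i\}_{i\in B}$ and any $A\subset B$, the element $h:=\sum_{i\in B}c_i\psi_i$ is its own (finite) basis expansion, so $\psi_i^*(h)=c_i$ for $i\in A\subset B$, whence $|c_i|\le 2K_b\|h\|$ and, summing,
$$
\sum_{i\in A}|c_i|\le 2K_b|A|\,\Big\|\sum_{i\in B}c_i\psi_i\Big\|.
$$
Applying this with $B=A\cup\Lambda$ (legitimate because $A\cap\Lambda=\emptyset$ and the $\psi_i$ are distinct), we conclude that every $m$-sparse $f=\sum_{i\in T}x_i\psi_i$ satisfies \textbf{A3} with $V=2K_b$, $r=1$, and $D=\infty$; in fact $\Psi$ itself has the $\ell_1$ incoherence property with these parameters.

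Now I would apply Theorem \ref{T2.7}. Since $\rho(u)\le\gamma u^2$ we take $q=2$, so $q'=2$ and $rq'=2$, and there is no depth restriction to satisfy because we may take $D=\infty$. Given $f_0$ and $m$, fix $\e>\sigma_m(f_0,\Psi)$ and pick an $m$-sparse $f^\e$ with $\|f_0-f^\e\|\le\e$; by the previous paragraph $f^\e$ satisfies \textbf{A3} with $V=2K_b$, $r=1$. Theorem \ref{T2.7} then gives
$$
\|f_{C(t,\gamma,2)(2K_b)^2\ln(2K_b m)\,m^2}\|\le C\e
$$
with an absolute constant $C$, the number of iterations not depending on $\e$. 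Letting $\e\downarrow\sigma_m(f_0,\Psi)$ and absorbing $\ln(2K_b m)\le C(K_b)\ln(m+1)$ together with the factor $(2K_b)^2$ into a single constant $C(t,X,\Psi)=C_2(2)\,\gamma\,t^{-2}(2K_b)^2$ (for $m\ge 2$; the case $m=1$ is trivial after enlarging the constant), one obtains $\|f_{C(t,X,\Psi)m^2\ln m}\|\le C\sigma_m(f_0,\Psi)$, which is \eqref{4.5}.

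The only genuinely delicate step is the verification of \textbf{A3}; everything after it is bookkeeping of constants. The temptation to instead prove \textbf{A1}+\textbf{A2} and invoke Theorem \ref{T2.4} should be resisted, since \textbf{A2} can fail for a non-unconditional Schauder basis --- it is precisely the weaker hypothesis \textbf{A3} together with Theorem \ref{T2.7} that makes the argument go through for every Schauder basis.
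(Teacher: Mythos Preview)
Your proof is correct and follows the paper's intended route: verify that a normalized Schauder basis satisfies \textbf{A3} with $r=1$ and $V$ equal to (a constant times) the basis constant via the uniform bound on the coordinate functionals, then invoke Theorem~\ref{T2.7} with $q=2$, $q'=2$, $rq'=2$. The paper's remark immediately following the proposition---that the result holds more generally for $(1,D)$-unconditional dictionaries---confirms that the mechanism is exactly the one you identify (bounded coordinate functionals $\Rightarrow$ $(1,\infty)$-unconditionality $\Rightarrow$ \textbf{A3} with $r=1$), and your observation that Theorem~\ref{T2.4} is unavailable here because \textbf{A2} with $K>1$ can fail for a non-unconditional basis is precisely the point.
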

The proof of Proposition \ref{Example 5.} is based on Theorem \ref{T2.7}.
We note that the above bound still works if we replace the assumption that $\Psi$ is a Schauder basis by the assumption that a dictionary $\D$ is $(1,D)$-unconditional with constant $U$. Then we obtain
$$
\|f_{C(t,\gamma,U)K^2\ln K}\| \le C\sigma_K(f_0,\Psi),\quad\text{for}\quad K+C(t,\gamma,U)K^2\ln K\le D .
$$

\begin{Proposition}\label{Example 5q.} Let $X$ be a Banach space with $\rho(u)\le \gamma u^q$, $1<q\le 2$. Assume that $\Psi$ is a normalized Schauder basis for $X$. Then     \begin{equation}\label{4.5q}
\|f_{C(t,X,\Psi)m^{q'}\ln m}\| \le C\sigma_m(f_0,\Psi) .
\end{equation}
\end{Proposition}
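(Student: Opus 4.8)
The plan is to derive Proposition~\ref{Example 5q.} as an application of Theorem~\ref{T2.7} in exactly the same way Proposition~\ref{Example 5.} follows from it in the case $q=2$. First I would recall that a normalized Schauder basis $\Psi=\{\psi_k\}$ for $X$ has basis constant, say, $U=U(\Psi)$: for every finite index set $B$ and every set $A\subset B$ one has $\|\sum_{i\in A}c_i\psi_i\|\le U\|\sum_{i\in B}c_i\psi_i\|$, since the partial sum operators of a Schauder basis are uniformly bounded and a difference of two such projections also has norm at most $2U$ (one absorbs the factor $2$ into the constant). In the language of the paper this says precisely that $\Psi$ is a $(1,D)$-unconditional dictionary with a constant depending only on $\Psi$, for every depth $D$.

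Next I would verify assumption {\bf A3}. For a single-element set $A=\{i\}$, $|A|=1$, the $\ell_1$-incoherence inequality (\ref{C3}) reads $|c_i|\le V\|\sum_{i\in B}c_i\psi_i\|$ for any $B$ containing $i$. But $|c_i|=\|c_i\psi_i\|=\|(\text{projection onto }\psi_i)(\sum_{i\in B}c_i\psi_i)\|\le U\|\sum_{i\in B}c_i\psi_i\|$ by the boundedness of the coordinate functionals (again up to the basis constant). Hence $\Psi$ has the $\ell_1$-incoherence property {\bf A3} with $r=0$ and $V=V(\Psi)$, valid for all $K$ and all depths $D$; equivalently one may take any $r\in(0,1]$ at the cost of enlarging $V$, but the natural choice here is $r=0$, and the required condition $rq'\ge1$ of Theorem~\ref{T2.7} is not literally met with $r=0$, so instead I would use the smallest admissible exponent, taking $r$ with $rq'=1$, i.e.\ $r=1/q'=1-1/q$, absorbing the resulting factor $K^{r}$ into the constant $V$ (this is legitimate for a fixed space: $V K^{r}$ with $r=1/q'$ still gives a valid {\bf A3} constant for $K$-sparse elements, though not uniformly in $K$—but Theorem~\ref{T2.7} only needs the constant $V$, and the extra $K^r$ factor is exactly what produces the stated exponent).

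With {\bf A3} in hand I would apply Theorem~\ref{T2.7}: given $f_0$ and any $m$, pick an $m$-sparse $f^\e$ with $\|f_0-f^\e\|\le\e$ and $\e$ arbitrarily close to $\sigma_m(f_0,\Psi)$; Theorem~\ref{T2.7} then yields $\|f_N\|\le C\e$ after $N=C(t,\gamma,q)V^{q'}\ln(VK)K^{rq'}$ iterations provided $K+N\le D$. Here $K=m$, $rq'=1$ after the choice above, so $K^{rq'}=m$; wait—this would give $m\ln m$, not $m^{q'}\ln m$. The correct bookkeeping is instead to keep $r$ so that $K^{rq'}=K^{q'-1}\cdot K = K^{q'}$ does not arise; rather, one uses {\bf A3} with $r$ chosen so that the Nikol'skii-type bound $\sum_{i\in A}|x_i|\le V|A|^r\|f_A\|$ holds with the genuinely needed exponent. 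For a Schauder basis the honest statement is $\sum_{i\in A}|x_i|\le U|A|\,\|f_A\|$ only via the triangle inequality, i.e.\ {\bf A1} with $r=1$; combined with {\bf A2} (the $(1,D)$-unconditionality, constant $U$) this gives {\bf A3} with $r=1$ and $V=CU$. Then Theorem~\ref{T2.7} with $r=1$ delivers exponent $K^{rq'}=K^{q'}$ and the logarithmic factor $\ln(VK)\asymp\ln m$, i.e.\ exactly $\|f_{C(t,X,\Psi)m^{q'}\ln m}\|\le C\sigma_m(f_0,\Psi)$, as claimed; since the basis is Schauder the hypothesis $\rho(u)\le\gamma u^q$ is satisfied by assumption and the depth restriction $K+N\le D$ is vacuous because all constants hold for every $D$.

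The routine part is the verification of {\bf A1}/{\bf A2}/{\bf A3} from the basis constant; the only thing to be careful about is the choice $r=1$ (not $r=0$), which is forced both by the triangle-inequality nature of the $\ell_1$ bound for a general—possibly conditional—Schauder basis and by the requirement $rq'\ge1$ in Theorem~\ref{T2.7}, and it is precisely this $r=1$ that yields the exponent $q'$ rather than something smaller. I expect no genuine obstacle: the proposition is the direct $q\in(1,2]$ analogue of Proposition~\ref{Example 5.}, and its proof is a one-line invocation of Theorem~\ref{T2.7} once the unconditionality-type constants of a Schauder basis are recorded.
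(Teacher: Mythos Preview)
Your final approach---verify {\bf A3} with $r=1$ for a Schauder basis and then invoke Theorem~\ref{T2.7}---is exactly the paper's route. However, two of your intermediate claims are wrong and should be cleaned up.

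First, the assertion that for every finite $B$ and every $A\subset B$ one has $\|\sum_{i\in A}c_i\psi_i\|\le U\|\sum_{i\in B}c_i\psi_i\|$ is false for a general (conditional) Schauder basis. A difference of two partial-sum projections is a projection onto an \emph{interval} $\{n+1,\dots,m\}$, not onto an arbitrary finite set; uniform boundedness of arbitrary finite-set projections is precisely unconditionality. What \emph{is} true for any Schauder basis is that the coordinate functionals are uniformly bounded, which gives only $(1,D)$-unconditionality (the case $|A|=1$). This is all you actually use, so the error is harmless, but the stated justification is incorrect.

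Second, your route ``{\bf A1} with $r=1$ combined with $(1,D)$-unconditionality gives {\bf A3}'' does not work as written: the implication {\bf A1}+{\bf A2}$\Rightarrow${\bf A3} in the paper requires {\bf A2} for the $K$-sparse element itself, i.e.\ $(K,D)$-unconditionality, not merely $(1,D)$. The clean argument bypasses {\bf A1} and {\bf A2} entirely: apply each bounded coordinate functional $\psi_i^*$ directly to $\sum_{j\in B}c_j\psi_j$ to get $|c_i|\le U\|\sum_{j\in B}c_j\psi_j\|$ for every $i\in A\subset B$, and sum over $A$ to obtain $\sum_{i\in A}|c_i|\le U|A|\,\|\sum_{j\in B}c_j\psi_j\|$. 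This is exactly {\bf A3} with $r=1$ and $V=U$, valid for all $K$ and all $D$; the condition $rq'\ge 1$ holds, and Theorem~\ref{T2.7} yields $\|f_{C(t,\gamma,q)V^{q'}\ln(Vm)\,m^{q'}}\|\le C\sigma_m(f_0,\Psi)$, i.e.\ \eqref{4.5q}. The detour through $r=0$ should simply be deleted.
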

The proof of Proposition \ref{Example 5q.} is based on Theorem \ref{T2.7}.
We note that the above bound still works if we replace the assumption that $\Psi$ is a Schauder basis by the assumption that a dictionary $\D$ is $(1,D)$-unconditional with constant $U$. Then we obtain
$$
\|f_{C(t,\gamma,q,U)K^{q'}\ln K}\| \le C\sigma_K(f_0,\D),\quad\text{for}\quad K+C(t,\gamma,q,U)K^{q'}\ln K\le D .
$$

We now discuss application of general results of Section 2 to quasi-greedy bases. We begin with a brief introduction to the theory of quasi-greedy bases. Let $X$ be an
infinite-dimensional separable Banach space with a norm
$\|\cdot\|:=\|\cdot\|_X$ and let $\Psi:=\{\psi_m
\}_{m=1}^{\infty}$ be a normalized basis for $X$.      The concept of quasi-greedy basis was introduced in \cite{KonT}.

\begin{Definition}\label{D4.1}
The basis $\Psi$ is called quasi-greedy if there exists some
constant $C$ such that
$$\sup_m \|G_m(f,\Psi)\| \leq C\|f\|.$$
\end{Definition}

Subsequently, Wojtaszczyk \cite{W1} proved that these are
precisely the bases for which the TGA merely converges, i.e.,
$$\lim_{n\rightarrow \infty}G_n(f)=f.$$

 The following lemma is from \cite{DKK} (see also \cite{DS-BT} and \cite{GHO} for further discussions). 

\begin{Lemma}\label{L4.1} Let $\Psi$ be a quasi-greedy basis of $X$. Then for any finite set of indices $\Lambda$ we have for all $f\in X$
$$
\|S_\Lambda(f,\Psi)\| \le C \ln(|\Lambda|+1)\|f\|,  
$$
where for $f=\sum_{k=1}^\infty c_k(f)\psi_k$ we denote $S_\Lambda(f,\Psi) :=\sum_{k\in\Lambda} c_k(f)\psi_k$.
\end{Lemma}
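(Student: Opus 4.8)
The plan is to reduce the bound on the projection $S_\Lambda(f,\Psi)$ to an estimate that follows from the quasi-greedy property by a dyadic (layer-cake) decomposition of $\Lambda$ according to the size of the coefficients $|c_k(f)|$. First I would normalize: it suffices to treat the case $\|f\| = 1$, and by the quasi-greedy property (Definition \ref{D4.1}) together with Wojtaszczyk's theorem we know $\sup_m \|G_m(f,\Psi)\| \le C\|f\|$, hence also, comparing $G_m$ for consecutive $m$, the individual coordinate functionals are uniformly bounded: $|c_k(f)| \le C'\|f\|$ for all $k$, and more importantly any "greedy-type" sum of coefficients whose moduli lie in a fixed dyadic range $[2^{-j-1}\lambda, 2^{-j}\lambda)$ is controlled. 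The key intermediate fact (this is the standard lemma behind quasi-greedy bases) is that for any finite set $A$ on which the coefficients are comparable up to a factor $2$, one has $\|S_A(f,\Psi)\| \le C\|f\|$ with $C$ depending only on the quasi-greedy constant; this follows because such an $S_A(f,\Psi)$ can be written as a bounded combination (with coefficients $\pm 1$ averaged, or via the $2$-boundedness of $G_m - G_{m'}$) of two greedy sums $G_m(f,\Psi)$.

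Next I would split $\Lambda$ into dyadic layers. Set $M := \max_{k\in\Lambda}|c_k(f)|$ and for $j = 0, 1, 2, \dots$ let
$$
\Lambda_j := \{\, k \in \Lambda : 2^{-j-1} M < |c_k(f)| \le 2^{-j} M \,\}.
$$
On each $\Lambda_j$ the coefficients are comparable within a factor $2$, so by the key fact above $\|S_{\Lambda_j}(f,\Psi)\| \le C\|f\|$. If there were only $O(\ln(|\Lambda|+1))$ nonempty layers we would be done by the triangle inequality. The remaining issue is that $\Lambda$ could have coefficients of vastly different sizes, producing many nonempty layers. Here I would use that the tail layers — those with $2^{-j}M$ very small compared to $\|f\|/|\Lambda|$ — contribute a sum that is dominated in norm by a crude bound: $\|S_{\Lambda_j}(f,\Psi)\| \le \sum_{k\in\Lambda_j}|c_k(f)|\,\|\psi_k\| \le |\Lambda|\, 2^{-j}M$, which is geometrically summable and, once $2^{-j}M \le M/|\Lambda|$, totals at most $O(M) = O(\|f\|)$ (using again $M \le C'\|f\|$). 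Thus only the layers with $j \le \log_2 |\Lambda| + O(1)$ need the quasi-greedy estimate, and there are $O(\ln(|\Lambda|+1))$ of them; summing gives
$$
\|S_\Lambda(f,\Psi)\| \le \sum_{j \le \log_2|\Lambda|+O(1)} \|S_{\Lambda_j}(f,\Psi)\| + \sum_{j > \log_2|\Lambda|+O(1)} \|S_{\Lambda_j}(f,\Psi)\| \le C\ln(|\Lambda|+1)\|f\| + C\|f\|.
$$

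The main obstacle I anticipate is the rigorous justification of the "comparable coefficients" fact, i.e. that $\|S_A(f,\Psi)\| \lesssim \|f\|$ whenever $\max_{k\in A}|c_k(f)| \le 2\min_{k\in A}|c_k(f)|$, with a constant depending only on the quasi-greedy constant and not on $|A|$. This is where one must be careful: the natural argument perturbs the coefficients on $A$ to make them all equal, writes the resulting constant-coefficient sum as a difference of two thresholding greedy approximants $G_m(g,\Psi)$ for a suitable $g$, invokes the uniform bound on $\|G_m\|$, and then controls the perturbation — but keeping track of the multiplicative losses so that they stay bounded requires the truncation/perturbation argument of Dilworth–Kalton–Kutzarova (the source \cite{DKK}). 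Everything else — the dyadic splitting, the geometric tail bound, the final count of layers — is routine once that building block is in hand.
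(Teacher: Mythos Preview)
The paper does not prove this lemma at all; it simply records it as ``from \cite{DKK}'' and refers the reader to \cite{DS-BT} and \cite{GHO} for further discussion. So there is no ``paper's own proof'' to compare against.

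That said, your proposal is essentially the standard Dilworth--Kalton--Kutzarova argument and is correct in outline: dyadically slice $\Lambda$ by coefficient size, bound each of the top $O(\ln|\Lambda|)$ layers by $C\|f\|$ via the quasi-greedy machinery, and dispose of the geometric tail using the uniform bound $M\le C'\|f\|$ on the coordinate functionals of a normalized Schauder basis. Your identification of the crux is accurate: the step $\|S_{\Lambda_j}(f)\|\le C\|f\|$ for a layer on which the coefficients are within a factor~$2$ is not a one-liner. The sketch you give (``difference of two greedy sums'') needs a small refinement: $\Lambda_j$ itself need not be of the form $G_m - G_{m'}$ for $f$, since there may be indices outside $\Lambda$ whose coefficients also fall in the same dyadic range. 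One first passes to the full slab $\{k:2^{-j-1}M<|c_k(f)|\le 2^{-j}M\}$, which \emph{is} a difference of greedy sets and hence has norm $\le 2C\|f\|$; then one projects from the slab down to $\Lambda_j$ using that on the slab all coefficients are comparable, invoking the ``suppression unconditional for constant coefficients'' property of quasi-greedy bases (this is exactly the DKK truncation lemma you cite). With that adjustment the argument goes through cleanly.
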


We now formulate a result about quasi-greedy bases in $L_p$ spaces. The following theorem is from \cite{TYY1}. We note that in the case $p=2$
 Theorem \ref{T4.1} was proved in \cite{W1}. Some notations first. For a given element $f\in X$ we consider the expansion
$$
f=\sum_{k=1}^{\infty}c_k(f)\psi_k
$$
and the decreasing rearrangement of its coefficients
$$
|c_{k_1}(f)|\ge |c_{k_2}(f)|\ge... \,\,.
$$
Denote
$$a_n(f):=|c_{k_n}(f)|.$$

\begin{Theorem}\label{T4.1} Let $\Psi=\{\psi_m\}_{m=1}^\infty$ be a quasi-greedy basis of the $L_p$ space, $1<p<\infty$. Then for each $f\in X$ we have
$$
C_1(p)\sup_n n^{1/p}a_n(f)\le  \|f\|_p\le C_2(p) \sum_{n=1}^\infty n^{-1/2}a_n(f),\quad 2\le p <\infty;
$$
$$
C_3(p)\sup_n n^{1/2}a_n(f)\le  \|f\|_p\le C_4(p) \sum_{n=1}^\infty n^{1/p-1}a_n(f),\quad 1< p \le 2.
$$
\end{Theorem}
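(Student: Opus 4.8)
The plan is to prove the two chains of inequalities in Theorem \ref{T4.1} by exploiting Lemma \ref{L4.1} together with the structure of quasi-greedy bases. I would first treat the lower bounds, which are the easier direction. Fix $f=\sum_k c_k(f)\psi_k$ and let $|c_{k_1}(f)|\ge|c_{k_2}(f)|\ge\cdots$ be the decreasing rearrangement. For the lower bound $\sup_n n^{1/p}a_n(f)\le C\|f\|_p$ when $2\le p<\infty$, I would apply Lemma \ref{L4.1} to the set $\Lambda=\{k_1,\dots,k_n\}$ of the $n$ largest coefficients: since $\Psi$ is quasi-greedy, $\|G_n(f,\Psi)\|=\|S_\Lambda(f,\Psi)\|\le C\|f\|_p$, and then I would bound $\|S_\Lambda(f,\Psi)\|_p$ from below by a constant times $n^{1/p}a_n(f)$, using that the $n$th coefficient has modulus $a_n(f)$ and that a normalized quasi-greedy basis in $L_p$ is democratic (so that $\|S_\Lambda(f,\Psi)\|_p\asymp |\Lambda|^{1/p}$ for a block of roughly equal coefficients); alternatively one can invoke the known fact that quasi-greedy bases in $L_p$, $p\ge 2$, have the property that the unit vector basis dominates $\ell_p$-type behavior from below. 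The lower bound for $1<p\le 2$ is symmetric, with exponent $1/2$ in place of $1/p$, using the dual picture.

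Next I would turn to the upper bounds, which are the heart of the theorem. For $2\le p<\infty$ I want $\|f\|_p\le C\sum_n n^{-1/2}a_n(f)$. The strategy is a dyadic (layer) decomposition of $f$: write $f=\sum_{s\ge 0}\Delta_s$ where $\Delta_s=S_{\Lambda_s}(f,\Psi)$ and $\Lambda_s$ collects the coefficients with rank in $[2^s,2^{s+1})$. Each block $\Lambda_s$ has cardinality about $2^s$ and all its coefficients are comparable to $a_{2^s}(f)$. By Lemma \ref{L4.1}, $\|\Delta_s\|_p\le C\ln(|\Lambda_s|+1)\|f\|_p\lesssim s\|f\|_p$; but that bound involves $\|f\|_p$ itself, so instead I would combine Lemma \ref{L4.1} with a comparison to the $\ell_2$-normalization: a quasi-greedy basis of $L_p$, $p\ge 2$, satisfies $\|\sum_{k\in\Lambda}\pm\psi_k\|_p\lesssim |\Lambda|^{1/2}$ (this is exactly the democracy/upper-democracy estimate for $p\ge2$, provable from Lemma \ref{L4.1} plus the unconditionality-for-constant-coefficients property of quasi-greedy bases). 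Hence $\|\Delta_s\|_p\lesssim 2^{s/2}a_{2^s}(f)$, and summing over $s$ gives $\|f\|_p\lesssim\sum_s 2^{s/2}a_{2^s}(f)\asymp\sum_n n^{-1/2}a_n(f)$ after comparing the dyadic sum with the full series. For $1<p\le2$ the roles reverse: one uses $\|\sum_{k\in\Lambda}\pm\psi_k\|_p\lesssim|\Lambda|^{1/p}$ and the triangle inequality on the dyadic blocks to get $\|f\|_p\lesssim\sum_s 2^{s/p}a_{2^s}(f)\asymp\sum_n n^{1/p-1}a_n(f)$.

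The main obstacle, and the step I would be most careful about, is establishing the block estimates $\|S_\Lambda(f,\Psi)\|_p\lesssim|\Lambda|^{\max(1/2,1/p)}\max_{k\in\Lambda}|c_k(f)|$ and the matching lower bound, i.e. pinning down the precise democracy exponent of a quasi-greedy basis in $L_p$. The clean way is: (i) quasi-greedy bases are \emph{unconditional for constant coefficients}, so $\|\sum_{k\in\Lambda}\varepsilon_k\psi_k\|_p\asymp\|\sum_{k\in\Lambda}\psi_k\|_p$ uniformly in signs $\varepsilon_k$; (ii) by Lemma \ref{L4.1} applied to $g=\sum_{k\in\Lambda}\psi_k$ one controls partial sums, and combined with Khintchine's inequality in $L_p$ (averaging over random signs) one gets $\|\sum_{k\in\Lambda}\psi_k\|_p\asymp \big\|(\sum_{k\in\Lambda}|\psi_k|^2)^{1/2}\big\|_p$; (iii) since the $\psi_k$ are $L_p$-normalized, for $p\ge2$ Minkowski's integral inequality gives $\big\|(\sum|\psi_k|^2)^{1/2}\big\|_p\le|\Lambda|^{1/2}$, while for $p\le2$ one gets $|\Lambda|^{1/p}$ from the reverse embedding $\ell_p\hookrightarrow\ell_2$ pointwise-to-norm comparison. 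Once these democracy bounds are in hand, the dyadic summation is routine. I would also need to check the elementary comparison $\sum_{s\ge0}2^{s\alpha}a_{2^s}(f)\asymp\sum_{n\ge1}n^{\alpha-1}a_n(f)$ for $\alpha\in(0,1)$, which follows from monotonicity of $a_n(f)$ by grouping the series into dyadic blocks, and similarly $\sup_n n^\alpha a_n(f)\asymp\sup_s 2^{s\alpha}a_{2^s}(f)$.
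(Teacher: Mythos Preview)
The paper does not prove Theorem~\ref{T4.1}; it is quoted from \cite{TYY1} (with the case $p=2$ attributed to \cite{W1}) and used as a black box in the subsequent propositions. So there is no ``paper's proof'' to compare against directly.

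Your outline is essentially the standard route taken in \cite{TYY1}: reduce everything to the bounds
\[
|\Lambda|^{\min(1/2,1/p)} \lesssim \Big\|\sum_{k\in\Lambda}\psi_k\Big\|_p \lesssim |\Lambda|^{\max(1/2,1/p)},
\]
then use unconditionality for constant coefficients plus a dyadic layer decomposition. Your arguments for the two \emph{upper} democracy bounds (Minkowski for $p\ge2$, the pointwise inclusion $\ell_p\subset\ell_2$ for $p\le2$) are correct, and the passage from these to the upper estimates on $\|f\|_p$ via dyadic blocks is fine.

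There is, however, a real gap on the \emph{lower} side. For $1<p\le2$ you need $\|\sum_{k\in\Lambda}\psi_k\|_p\gtrsim|\Lambda|^{1/2}$, and this does \emph{not} follow from the Khintchine/square-function identity $\|\sum_{k\in\Lambda}\psi_k\|_p\asymp\big\|(\sum_{k\in\Lambda}|\psi_k|^2)^{1/2}\big\|_p$ combined with any pointwise $\ell_p$--$\ell_2$ comparison or Minkowski-type inequality; for $p<2$ those go the wrong way. What is actually used is that $L_p$ has cotype~$2$ when $1<p\le2$, giving $(\sum_k\|\psi_k\|_p^2)^{1/2}\lesssim\mathbb{E}\|\sum_k\varepsilon_k\psi_k\|_p$ and hence $|\Lambda|^{1/2}\lesssim\|\sum_{k\in\Lambda}\psi_k\|_p$ via unconditionality for constant coefficients. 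Your phrase ``the dual picture'' does not supply this: duality of quasi-greedy bases is delicate and is not what drives the argument. Symmetrically, the lower bound $|\Lambda|^{1/p}$ for $p\ge2$ follows from cotype~$p$ of $L_p$ (or, equivalently, from the pointwise inequality $(\sum|\psi_k|^2)^{1/2}\ge(\sum|\psi_k|^p)^{1/p}$, which you did not state).

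Two smaller points. First, Lemma~\ref{L4.1} is not the right tool here; its logarithmic loss would spoil the exponents. The property you actually need throughout is that quasi-greedy bases are unconditional for constant coefficients and satisfy the truncation estimate $\big\|\sum_{k\in\Lambda}\operatorname{sign}(c_k)\,a\,\psi_k\big\|\lesssim\big\|\sum_{k\in\Lambda}c_k\psi_k\big\|$ whenever $|c_k|\ge a$ for all $k\in\Lambda$ (this is how one passes from $\|G_n(f)\|_p\lesssim\|f\|_p$ to $a_n(f)\,\|\sum_{j\le n}\psi_{k_j}\|_p\lesssim\|f\|_p$). Second, the claim ``a normalized quasi-greedy basis in $L_p$ is democratic'' is not true in general and, in any case, is not an input here; the specific exponents $1/2$ and $1/p$ are consequences of the type/cotype structure of $L_p$, which is precisely the content of the theorem.
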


\begin{Proposition}\label{Example 6.}  
Let $\Psi$ be a normalized   quasi-greedy basis for $L_p$, $2\le p<\infty$.   Then  \begin{equation}\label{4.6}
\|f_{C(t,p)m^{2(1-1/p)} \ln (m+1)}\| \le C\sigma_m(f_0,\Psi) .
\end{equation}
\end{Proposition}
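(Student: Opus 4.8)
The plan is to deduce Proposition~\ref{Example 6.} from Theorem~\ref{T2.7}. For $X=L_p$ with $2\le p<\infty$ one has $\rho(u)\le\gamma u^2$ with $\gamma=(p-1)/2$, so $q=2$, $q'=2$, and the standing hypothesis $rq'\ge1$ of Theorem~\ref{T2.7} becomes $2(1-1/p)\ge1$, which is exactly the assumption $p\ge2$. Thus the whole matter reduces to showing that a normalized quasi-greedy basis $\Psi$ of $L_p$ has the $\ell_1$ incoherence property — hence that every $m$-sparse element satisfies \textbf{A3} — with exponent $r=1-1/p$, a constant $V=V(p)$ not depending on the sparsity, and with arbitrarily large depth $D$.

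The core of the argument, and the step to be careful about, is this verification, for which I would invoke only the lower bound in Theorem~\ref{T4.1}, namely $C_1(p)\sup_n n^{1/p}a_n(F)\le\|F\|_p$, i.e.\ $a_n(F)\le C_1(p)^{-1}n^{-1/p}\|F\|_p$ for every $F\in L_p$. Given a finite index set $B$, coefficients $\{c_i\}_{i\in B}$ and $A\subset B$, set $F=\sum_{i\in B}c_i\psi_i$ and arrange $\{|c_i|:i\in A\}$ in nonincreasing order $b_1\ge\dots\ge b_{|A|}$. Since the $n$-th largest value taken over the subset $A$ cannot exceed the $n$-th largest coefficient modulus of $F$, we have $b_n\le a_n(F)$ for $n\le|A|$, and therefore
$$
\sum_{i\in A}|c_i|=\sum_{n=1}^{|A|}b_n\le C_1(p)^{-1}\|F\|_p\sum_{n=1}^{|A|}n^{-1/p}\le \frac{p}{p-1}\,C_1(p)^{-1}\,|A|^{1-1/p}\,\|F\|_p ,
$$
where the last step uses $\sum_{n=1}^{N}n^{-1/p}\le\int_0^N x^{-1/p}\,dx=\frac{p}{p-1}N^{1-1/p}$ (legitimate since $p>1$). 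This is precisely the $\ell_1$ incoherence property of $\Psi$ with $r=1-1/p$ and $V=V(p):=\frac{p}{p-1}C_1(p)^{-1}$, valid for arbitrarily large $K$ and $D$; in particular $D$ is unrestricted, so the depth condition in Theorem~\ref{T2.7} is vacuous.

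To conclude, I would fix $f_0$, take $\e>\sigma_m(f_0,\Psi)$ arbitrary, choose an $m$-sparse $f^\e$ with $\|f_0-f^\e\|\le\e$, and apply Theorem~\ref{T2.7} with $K=m$, $q=2$, $r=1-1/p$, $V=V(p)$; it yields $\|f_{N_0}\|\le C\e$ with $N_0=C(t,\gamma,2)\,V(p)^2\ln(V(p)m)\,m^{2(1-1/p)}$. Since $\|f_n\|$ is nonincreasing in $n$, since $C(t,\gamma,2)V(p)^2$ depends only on $t$ and $p$, and since $\ln(V(p)m)\le C(p)\ln(m+1)$ for all $m\ge1$, one may replace $N_0$ by $C(t,p)\,m^{2(1-1/p)}\ln(m+1)$; letting $\e\downarrow\sigma_m(f_0,\Psi)$ gives (\ref{4.6}). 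The one genuinely delicate point is the one already flagged: the argument must be routed through \textbf{A3} together with the $\sup$-side of Theorem~\ref{T4.1} so that $V$ stays bounded — verifying instead the unconditionality-type bound \textbf{A2} via Lemma~\ref{L4.1} would only give $U\asymp\ln(m+1)$ and, through Theorem~\ref{T2.4}, would square that logarithm in the iteration count. One should also double-check the term-by-term domination of the rearrangements and the borderline identity $rq'=2(1-1/p)\ge1\iff p\ge2$.
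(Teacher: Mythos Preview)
Your proposal is correct and follows exactly the approach the paper intends: verify the dictionary-level $\ell_1$ incoherence property \textbf{A3} with $r=1-1/p$ and $V=V(p)$ via the lower inequality in Theorem~\ref{T4.1}, then apply Theorem~\ref{T2.7} with $q=2$. Your closing remark that routing instead through \textbf{A2} and Lemma~\ref{L4.1} would cost an extra power of the logarithm is also on point.
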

The proof of Proposition \ref{Example 6.} is based on Theorem \ref{T2.7}.

\begin{Proposition}\label{Example 6q.}  
Let $\Psi$ be a normalized   quasi-greedy basis for $L_p$, $1<p\le 2$.   Then \begin{equation}\label{4.6q}
\|f_{C(t,p)m^{p'/2} \ln (m+1)}\| \le C\sigma_m(f_0,\Psi) .
\end{equation}
\end{Proposition}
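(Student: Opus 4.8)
The plan is to derive Proposition~\ref{Example 6q.} from Theorem~\ref{T2.7}, exactly along the lines of Proposition~\ref{Example 6.}, the only difference being the numerical values of the parameters $q,\gamma,r,V$ forced by the range $1<p\le 2$.

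First I would record the smoothness of the space: for $X=L_p$ with $1<p\le 2$ one has $\rho(u)\le\gamma u^q$ with $q=p$ and $\gamma=1/p$, so that $q'=p'$. Next I would show that the normalized quasi-greedy basis $\Psi$ has the $\ell_1$ incoherence property {\bf A3} with exponent $r=1/2$ and constant $V=V(p)$, for every pair $K\le D$ (no depth restriction is needed). Fix $B$ with coefficients $\{c_i\}_{i\in B}$ and put $g:=\sum_{i\in B}c_i\psi_i$; let $a_n(g)$ denote the decreasing rearrangement of $\{|c_i|\}_{i\in B}$. For any $A\subset B$ the decreasing rearrangement of $\{|c_i|\}_{i\in A}$ is dominated termwise by $(a_n(g))_{n\ge 1}$, hence $\sum_{i\in A}|c_i|\le\sum_{n=1}^{|A|}a_n(g)$. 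The lower estimate of Theorem~\ref{T4.1} in the case $1<p\le 2$ gives $a_n(g)\le C_3(p)^{-1}n^{-1/2}\|g\|_p$, and since $\sum_{n=1}^{N}n^{-1/2}\le 2N^{1/2}$ we obtain $\sum_{i\in A}|c_i|\le V(p)\,|A|^{1/2}\,\|g\|_p$ with $V(p)=2C_3(p)^{-1}$, which is precisely {\bf A3} with $r=1/2$.

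It then remains to feed this into the general machinery. Since $r=1/2$ and $q'=p'\ge 2$ we have $rq'=p'/2\ge 1$, so the standing hypothesis $rq'\ge 1$ of Theorem~\ref{T2.7} is satisfied. Given $f_0$ and $m$, I would choose an $m$-sparse $f^\e$ with $\|f_0-f^\e\|_p\le\sigma_m(f_0,\Psi)_p+\delta$ for an arbitrarily small $\delta>0$, take $K=m$, $\e=\sigma_m(f_0,\Psi)_p+\delta$, and pick $D$ larger than the number of iterations written below (legitimate, since {\bf A3} holds for all depths). Theorem~\ref{T2.7} then yields
$$\|f_{C(t,\gamma,p)V^{p'}\ln(Vm)\,m^{p'/2}}\|_p\le C\e .$$
Because $V=V(p)$ and $\gamma=1/p$ depend only on $p$, one has $C(t,\gamma,p)V^{p'}\ln(Vm)\,m^{p'/2}\le C(t,p)\,m^{p'/2}\ln(m+1)$, and letting $\delta\to 0$ gives (\ref{4.6q}).

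The routine parts are the elementary estimate $\sum_{n\le N}n^{-1/2}\le 2N^{1/2}$, the termwise domination of a restricted decreasing rearrangement by the full one, and the bookkeeping that replaces $\ln(Vm)$ by $\ln(m+1)$. The one genuinely load-bearing step is the verification of {\bf A3}, i.e.\ the Nikol'skii-type $\ell_1 L_p$ inequality with exponent $1/2$ for quasi-greedy bases; this is exactly where Theorem~\ref{T4.1} enters, and everything else is a direct substitution into the results of Section~2.
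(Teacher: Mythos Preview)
Your argument is correct and follows exactly the route indicated in the paper: verify {\bf A3} for the quasi-greedy basis with $r=1/2$ via the lower inequality of Theorem~\ref{T4.1} for $1<p\le 2$, note that $q=p$, $q'=p'$ so $rq'=p'/2\ge 1$, and then apply Theorem~\ref{T2.7}. The bookkeeping (termwise domination of rearrangements, $\sum_{n\le N}n^{-1/2}\le 2N^{1/2}$, absorbing $\ln(Vm)$ into $C(t,p)\ln(m+1)$, and the $\delta\to 0$ passage) is all in order.
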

The proof of Proposition \ref{Example 6.} is based on Theorem \ref{T2.7}.

\begin{Proposition}\label{Example 7.} Let $\Psi$ be a normalized uniformly bounded orthogonal quasi-greedy basis for $L_p$, $2\le p<\infty$ (for existence of such bases see \cite{N}). Then  
\begin{equation}\label{4.7}
\|f_{C(t,p,\Psi)m  \ln\ln(m+3)}\|_p \le C\sigma_m(f_0,\Psi)_p .
\end{equation}
\end{Proposition}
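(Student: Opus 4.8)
The plan is to apply Theorem \ref{T2.7} (the {\bf A3}-based Lebesgue-type inequality) to the basis $\Psi$, but with a sharper bound on the $\ell_1$ incoherence parameters than the one used for a generic uniformly bounded orthonormal system. Recall that for a uniformly bounded orthonormal system normalized in $L_p$ with $2\le p<\infty$ one has $\rho(u)\le\gamma u^2$, so $q=2$, $q'=2$, and the crude estimate in Proposition \ref{Example 1.} uses the Nikol'skii-type inequality $\sum_{i\in A}|x_i|\le C|A|^{1/2+1/p'}\|f_A\|_p$ coming from H\"older plus the $L_2$-orthonormality and the $L_\infty$ bound; this gives $r=1/2+1/p'-1/2=1/p'$ after accounting for normalization, hence $rq'=2/p'=2(1-1/p)$... the point is that here we want to do better by using the \emph{quasi-greedy} structure. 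First I would invoke Theorem \ref{T4.1}: for a normalized quasi-greedy basis in $L_p$, $2\le p<\infty$, and any finitely supported $f=\sum_{i\in B}c_i\psi_i$ one has $\|f\|_p\ge C_1(p)\sup_n n^{1/p}a_n(f)$, which controls the decreasing rearrangement of the coefficients.

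The key step is to derive from Theorem \ref{T4.1}, together with Lemma \ref{L4.1} and the uniform boundedness $\|\psi_k\|_\infty\le M$, an {\bf A3}-type bound of the form
$$
\sum_{i\in A}|c_i|\le V\,|A|\,\big(\ln(|A|+1)\big)^{?}\,\Big\|\sum_{i\in B}c_i\psi_i\Big\|_p,
\qquad A\subset B,\ |B|\le D,
$$
and more precisely to track the $|A|$-dependence so that effectively $r$ can be taken close to $1$ but with a logarithmic correction absorbed so that $K^{rq'}\ln(VK)$ collapses to $K\ln\ln K$. Concretely, I would split $A$ according to the size of $|c_i|$: the $\sup_n n^{1/p}a_n$ bound from Theorem \ref{T4.1} gives $\sum_{i\in A}|c_i|\lesssim \sum_{n=1}^{|A|} n^{-1/p}\cdot(\sup_m m^{1/p}a_m)\lesssim |A|^{1-1/p}\|f\|_p$ — but that is exactly the generic exponent. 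The improvement must come from using the $L_\infty$-boundedness to say that on the set where the coefficients are \emph{large} there can be only few of them (a restricted weak-type / entropy argument), while on the complementary set the $\ell_1$ mass is small; balancing the two splits at a level chosen $\asymp \ln|A|$ is what turns the polynomial loss $|A|^{1-1/p}$ into $|A|\cdot(\text{something tiny})$, eventually producing the $m\ln\ln(m+3)$ iteration count after feeding $r$ with a logarithmic correction into Theorem \ref{T2.7} and simplifying $\ln(VK)$. The relevant estimate of this flavor for uniformly bounded quasi-greedy systems is essentially the $L_\infty$-refinement of the bounds in \cite{TYY1}; I would cite and adapt it.

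Having obtained {\bf A3} with parameters $K$, $D$, $V=V(p,\Psi)$, and an \emph{effective} exponent such that $K^{rq'}\ln(VK)\lesssim K\ln\ln(K+3)$, I would plug this into Theorem \ref{T2.7}: with $\rho(u)\le\gamma u^2$, $q'=2$, it yields
$$
\|f_{C(t,\gamma,q)V^{2}\ln(VK)\,K^{rq'}}\|_p\le C\e
\quad\text{for}\quad K+C(t,\gamma,q)V^{2}\ln(VK)K^{rq'}\le D,
$$
and then, exactly as in the proof of Proposition \ref{Example 1.}, I would remove the role of $\e$ and $D$ by letting $D\to\infty$ (the basis is infinite, so any $K$ is admissible) and taking the best $m$-term approximant as $f^\e$ with $\e\to\sigma_m(f_0,\Psi)_p$, giving
$$
\|f_{C(t,p,\Psi)\,m\ln\ln(m+3)}\|_p\le C\sigma_m(f_0,\Psi)_p .
$$
The main obstacle I anticipate is precisely the sharp {\bf A3} estimate with the $\ln\ln$ (rather than $\ln$) saving: one has to combine the quasi-greedy coefficient bounds of Theorem \ref{T4.1} with the uniform $L_\infty$ bound in a way that beats the naive H\"older estimate, and to verify that the logarithmic factor coming from Lemma \ref{L4.1} (needed to handle the partial-sum projections $S_\Lambda$ appearing when one estimates $\|f_A-\sum_{i\in\Lambda}c_ig_i\|$ in the {\bf A3} inequality) does not destroy the gain. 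Everything else — the passage from Theorem \ref{T2.7} to the stated Lebesgue-type inequality — is routine and identical to the earlier examples.
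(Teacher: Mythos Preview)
Your proposal has a genuine gap: you are trying to reach the $m\ln\ln(m+3)$ count via Theorem \ref{T2.7} alone, and that route cannot give the $\ln\ln$ factor.

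Here is why. For $2\le p<\infty$ we have $q=2$, $q'=2$. A uniformly bounded orthogonal system normalized in $L_p$ already satisfies {\bf A3} with $r=1/2$ and a \emph{constant} $V$: by orthogonality in $L_2$ and Cauchy--Schwarz,
\[
\sum_{i\in A}|c_i|\le |A|^{1/2}\Big(\sum_{i\in A}|c_i|^2\Big)^{1/2}
=|A|^{1/2}\Big\|\sum_{i\in A}c_i\psi_i\Big\|_2
\le |A|^{1/2}\Big\|\sum_{i\in B}c_i\psi_i\Big\|_2
\le C|A|^{1/2}\Big\|\sum_{i\in B}c_i\psi_i\Big\|_p,
\]
using $\|\cdot\|_2\le C\|\cdot\|_p$ on a bounded domain. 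This value $r=1/2$ is \emph{optimal}: testing on $c_i\equiv 1$ and using $\|\sum_{i\in A}\psi_i\|_p\asymp |A|^{1/2}$ shows no $r<1/2$ is possible. With $r=1/2$, $q'=2$, $V=O(1)$, Theorem \ref{T2.7} gives an iteration count $\asymp V^2\ln(VK)\,K^{rq'}=O(K\ln K)$, which is exactly Proposition \ref{Example 1.} and cannot be improved to $K\ln\ln K$ by any refinement of {\bf A3}. Your proposed ``splitting of $A$ according to coefficient size'' cannot beat $r=1/2$; the quasi-greedy weak-$\ell_p$ bound from Theorem \ref{T4.1} actually gives the \emph{worse} exponent $r=1-1/p$.

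The paper's approach is different and uses Theorem \ref{T2.8}, which requires both {\bf A2} and {\bf A3}. One keeps the same {\bf A3} ($r=1/2$, $V$ constant) from orthogonality, and in addition uses the quasi-greedy property via Lemma \ref{L4.1} to obtain {\bf A2} with $U=O(\ln(K+1))$. In Theorem \ref{T2.8} the iteration count is $V^{q'}\ln(U+1)\,K^{rq'}$ rather than $V^{q'}\ln(VK)\,K^{rq'}$; with $U=O(\ln K)$ this becomes $O(K\ln\ln K)$, which is precisely \eqref{4.7}. So the role of Lemma \ref{L4.1} is not a potential obstacle to be controlled, as you feared, but the very source of the $\ln\ln$ gain --- it enters through the {\bf A2} constant $U$, not through {\bf A3}.
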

The proof of Proposition \ref{Example 6.} is based on Theorem \ref{T2.8}.

\begin{Proposition}\label{Example 7q.} Let $\Psi$ be a normalized uniformly bounded orthogonal quasi-greedy basis for $L_p$, $1<p\le 2$ (for existence of such bases see \cite{N}). Then  
\begin{equation}\label{4.7q}
\|f_{C(t,p,\Psi)m^{p'/2}  \ln\ln(m+3)}\|_p \le C\sigma_m(f_0,\Psi)_p .
\end{equation}
\end{Proposition}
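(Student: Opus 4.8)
The plan is to verify the hypotheses of Theorem \ref{T2.8} for the dictionary $\D=\Psi$, with $X=L_p$, $1<p\le 2$. Since $L_p$ with $1<p\le 2$ has modulus of smoothness $\rho(u)\le \gamma u^p$, we are in the setting $q=p$, so $q'=p'$. Theorem \ref{T2.8} then yields, after $C(t,\gamma,p)V^{p'}\ln(U+1)m^{rp'}$ iterations, an approximant with error $\le C\sigma_m(f_0,\Psi)_p$ (taking $\e\to\sigma_m$ in the usual way, approximating $f_0$ by a best $m$-term element $f^\e$). So the work reduces to establishing good parameters $V$, $r$, $U$ in assumptions {\bf A2} and {\bf A3} for an $m$-sparse element, and then reading off the exponent.

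First I would establish {\bf A3}, the $\ell_1$-incoherence property. For a finite set $B$ with $|B|\le D$ and coefficients $\{c_i\}_{i\in B}$, and $A\subset B$ with $|A|\le m$, I need $\sum_{i\in A}|c_i|\le V|A|^r\|\sum_{i\in B}c_ig_i\|_p$. The route is: apply Lemma \ref{L4.1} to pass from $\|\sum_{i\in B}c_ig_i\|_p$ to $\|\sum_{i\in A}c_ig_i\|_p$ at the cost of a factor $C\ln(|A|+1)$; then bound $\sum_{i\in A}|c_i|$ by $\|\sum_{i\in A}c_ig_i\|_p$ up to a power of $|A|$. For the latter, I would use the lower bound in the second line of Theorem \ref{T4.1}, namely $C_3(p)\sup_n n^{1/2}a_n\le\|\sum_{i\in A}c_ig_i\|_p$, which gives $a_n\le C n^{-1/2}\|\cdot\|_p$ for the decreasing rearrangement of $(c_i)_{i\in A}$; summing over $n\le|A|$ gives $\sum_{i\in A}|c_i|\le C|A|^{1/2}\|\sum_{i\in A}c_ig_i\|_p$. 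Combining, {\bf A3} holds with $r=1/2$ and $V\asymp\ln(m+1)$ (the log being absorbed into $V$, or kept separate — I will keep track of it, since it matters for the final exponent of $\ln$).

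Next I would establish {\bf A2} with a constant $U$ that is at most an absolute constant times $\ln(D+1)$: for $f=\sum_{i\in T}x_ig_i$ with $|T|\le m$, and $A\subset T$, $\Lambda$ disjoint from $A$, $|A|+|\Lambda|\le D$, one has $f_A=S_A(f_A-\sum_{i\in\Lambda}c_ig_i)$ since $A$ and $\Lambda$ are disjoint, so Lemma \ref{L4.1} gives $\|f_A\|_p\le C\ln(D+1)\|f_A-\sum_{i\in\Lambda}c_ig_i\|_p$; hence $U\asymp\ln(D+1)$. Then I plug $r=1/2$, $q'=p'$, $rq'=p'/2$, $V\asymp\ln(m+1)$, $U\asymp\ln(D+1)$ into Theorem \ref{T2.8}: the number of iterations becomes $C(t,p)(\ln(m+1))^{p'}\ln(\ln(D+1)+1)\,m^{p'/2}$. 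With the admissibility constraint $K+(\text{iterations})\le D$ one may take $D\asymp m^{1+p'/2}\operatorname{polylog}$, so $\ln(U+1)\asymp\ln\ln m$; the surviving genuinely new factor is $\ln\ln(m+3)$, while the powers of $\ln(m+1)$ coming from $V$ need to be shown to be unnecessary here — this is the delicate point. The main obstacle is exactly this: squeezing the $\ln$ factors so that only $\ln\ln(m+3)$ survives and the $(\ln m)^{p'}$ from the $\ell_1$-incoherence constant $V$ does not appear. The resolution, as in the corresponding $2\le p<\infty$ case (Proposition \ref{Example 7.}), is to \emph{not} route through {\bf A3} with a logarithmic $V$, but instead to use the uniform boundedness of $\Psi$ together with orthonormality to get a clean $\ell_1$-incoherence bound with $V$ an absolute constant and $r=1/2$: the uniform bound $\|\psi_k\|_\infty\le M$ gives, via Hölder and Bessel's inequality on the $L_2$ side, $\sum_{i\in A}|c_i|\le|A|^{1/2}(\sum|c_i|^2)^{1/2}\le|A|^{1/2}\|\sum_{i\in A}c_ig_i\|_2\le C|A|^{1/2}\|\sum_{i\in A}c_ig_i\|_p$ for $p\le2$ using the boundedness of the domain — and then the disjointness handling of the $\Lambda$-part is done by the quasi-greedy projection bound only once, contributing a single $\ln$, not a power. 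Carrying this through gives {\bf A2} and {\bf A3} with $V$ and the relevant constants depending only on $t,p,\Psi$, $r=1/2$, hence $rq'=p'/2$, and feeding this into Theorem \ref{T2.8} produces the stated $\|f_{C(t,p,\Psi)m^{p'/2}\ln\ln(m+3)}\|_p\le C\sigma_m(f_0,\Psi)_p$.
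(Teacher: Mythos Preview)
Your overall plan --- verify {\bf A2} and {\bf A3} with $r=1/2$ and apply Theorem~\ref{T2.8} with $q=p$, $q'=p'$ --- is exactly right and matches the route the paper indicates. You also correctly identify the delicate point: one must obtain {\bf A3} with $V$ an absolute constant (not $V\asymp\ln m$), so that the iteration count in Theorem~\ref{T2.8} reads $C\,V^{p'}\ln(U+1)\,m^{p'/2}\asymp m^{p'/2}\ln\ln m$ once $U\asymp\ln m$ is supplied by Lemma~\ref{L4.1}.

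The gap is in your proposed resolution of that delicate point. Your chain
\[
\sum_{i\in A}|c_i|\le |A|^{1/2}\Bigl\|\sum_{i\in A}c_i\psi_i\Bigr\|_2\le C|A|^{1/2}\Bigl\|\sum_{i\in A}c_i\psi_i\Bigr\|_p
\]
fails at the last step: on a probability space one has $\|g\|_p\le\|g\|_2$ for $p\le 2$, not the reverse, and uniform boundedness of the $\psi_k$ does not repair this (for example, for the trigonometric system the Dirichlet kernel shows $\|\cdot\|_2/\|\cdot\|_p\to\infty$). The $L_2$ route you propose works in the companion case $2\le p<\infty$ (Proposition~\ref{Example 7.}), where $\|\cdot\|_2\le\|\cdot\|_p$ holds, but not here.

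The clean fix is already implicit in your first approach --- you just projected too early. Apply Theorem~\ref{T4.1} directly to the full element $h:=f_A-\sum_{i\in\Lambda}c_ig_i$ rather than to $f_A$: the lower bound $C_3(p)\sup_n n^{1/2}a_n(h)\le\|h\|_p$ gives $a_n(h)\le C\,n^{-1/2}\|h\|_p$, and since $\sum_{i\in A}|x_i|\le\sum_{n=1}^{|A|}a_n(h)$ (the $|A|$ coefficients indexed by $A$ are dominated by the $|A|$ largest coefficients of $h$), summation yields
\[
\sum_{i\in A}|x_i|\le C\,|A|^{1/2}\|h\|_p
\]
with $C$ depending only on $p$. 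This is {\bf A3} with $r=1/2$ and $V$ an absolute constant, obtained without any use of Lemma~\ref{L4.1} (so no logarithmic loss in $V$). Combining with {\bf A2} via Lemma~\ref{L4.1}, which gives $U\le C\ln(|A|+1)\le C\ln(m+1)$, Theorem~\ref{T2.8} then produces the stated bound. Note that this corrected argument uses only the quasi-greedy hypothesis; the uniform boundedness and orthogonality do not enter in the $1<p\le 2$ case.
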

The proof of Proposition \ref{Example 6.} is based on Theorem \ref{T2.8}.

 Prposition \ref{Example 4q.} is the first result about almost greedy bases with respect to WCGA in Banach spaces.  It shows that the univariate Haar basis is an almost greedy basis with respect to the WCGA in the $L_p$ spaces for $1<p\le 2$. 
 Proposition \ref{Example 1.} shows that uniformly bounded orthogonal bases are $\phi$-greedy 
 bases with respect to WCGA with $\phi(u) = C(t,p,\Omega)\ln(u+1)$ in the $L_p$ spaces for $2\le p<\infty$. We do not know if these bases are almost greedy with respect to WCGA. They are good candidates for that. 

It is known (see \cite{Tbook}, p. 17) that the univariate Haar basis is a greedy basis with respect to TGA for all $L_p$, $1<p<\infty$. Proposition \ref{Example 4.} only shows that it is a $\phi$-greedy basis with respect to WCGA with $\phi(u) = C(t,p)u^{1-2/p}$ in the $L_p$ spaces for $2\le p<\infty$. It is much weaker than the corresponding results for the $\H_p$, $1<p\le 2$, and for the trigonometric system, $2\le p<\infty$ (see Corollary \ref{Example 2.}). We do not know if this result on the Haar basis can be substantially improved. At the level of our today's technique we can observe that the Haar basis is ideal (greedy basis) for the TGA in $L_p$, $1<p<\infty$, almost ideal (almost greedy basis) for the WCGA in $L_p$, $1<p\le 2$, and that the trigonometric system is very good for the WCGA in $L_p$, $2\le p<\infty$. 

Corollary \ref{Example 2q.}  shows that our results for the trigonometric system in $L_p$, $1<p<2$, are not as strong as for $2\le p<\infty$. We do not know if it is a lack of appropriate technique or it reflects the nature of the WCGA with respect to the trigonometric system. 

We note that Propositions \ref{P2.1} and \ref{P2.2} can be used to formulate the above Propositions for a more general bases. In these cases we use Propositions \ref{P2.1} and \ref{P2.2} with $D=\infty$. In Propositions \ref{Example 1.}, \ref{Example 1q.}, \ref{Example 6.}, and \ref{Example 6q.}, where we used Theorem \ref{T2.7}, we can replace the basis $\Psi$ by a basis $\Phi$, which dominates the basis $\Psi$. In Propositions \ref{Example 4.}, \ref{Example 4q.}, \ref{Example 7.}, and \ref{Example 7q.}, where we used either Theorem \ref{T2.4} or \ref{T2.8}, we can replace the basis $\Psi$ by a basis $\Phi$, which is equivalent to the basis $\Psi$.

It is interesting to compare Theorem \ref{T2.3} with the following known result. The following theorem provides rate of convergence (see \cite{Tbook}, p. 347). We denote by $A_1(\D)$ the closure in $X$ of the convex hull of the symmetrized dictionary $\D^\pm :=\{\pm g:g\in \D\}$. 
\begin{Theorem}\label{T5.1} Let $X$ be a uniformly smooth Banach space with modulus of smoothness $\rho(u)\le \gamma u^q$, $1<q\le 2$. Take a number $\e\ge 0$ and two elements $f_0$, $f^\e$ from $X$ such that
$$
\|f_0-f^\e\| \le \e,\quad
f^\e/A(\e) \in A_1(\D),
$$
with some number $A(\e)>0$.
Then, for the  WCGA    we have  
$$
\|f^{c,t}_m\| \le  \max\left(2\e, C(q,\gamma)(A(\e)+\e)t(1+m)^{1/q-1}\right) . 
$$
\end{Theorem}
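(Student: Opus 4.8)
The plan is to run one step of the WCGA on the residual $f_{m-1}$, exactly as in the proof of Theorem \ref{T2.3}, but to exploit the hypothesis $f^\e/A(\e)\in A_1(\D)$ to get a cleaner lower bound for the quantity $S_m=\sup_{\phi\in A_1(\D)}|F_{f_{m-1}}(\phi)|$. Since $f^\e/A(\e)\in A_1(\D)$ we have
$$
S_m \ge |F_{f_{m-1}}(f^\e/A(\e))| \ge \frac{1}{A(\e)}\bigl(\|f_{m-1}\|-\e\bigr),
$$
using $F_{f_{m-1}}(f^\e)=F_{f_{m-1}}(f_{m-1})+F_{f_{m-1}}(f^\e-f_0)\ge \|f_{m-1}\|-\e$ (Lemma 6.9 of \cite{Tbook}). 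This replaces the $\|f_{A_m}\|_1^{-1}$ factor of Theorem \ref{T2.3} by the much simpler $A(\e)^{-1}$, which is what makes the bound uniform rather than sparsity-dependent.

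Next I would feed this into the smoothness inequality. By step (1) of the WCGA together with Lemma 6.10 of \cite{Tbook}, $|F_{f_{m-1}}(\varphi_m)|\ge tS_m$; treating (as in Theorem \ref{T2.3}) the case $F_{f_{m-1}}(\varphi_m)\ge tS_m$, for $\la\ge 0$ we get $\|f_{m-1}+\la\varphi_m\|\ge \|f_{m-1}\|+\la tS_m$, and combining with $\|f_{m-1}-\la\varphi_m\|+\|f_{m-1}+\la\varphi_m\|\le 2\|f_{m-1}\|(1+\rho(\la/\|f_{m-1}\|))$ and $\rho(u)\le\gamma u^q$ yields
$$
\|f_m\| \le \|f_{m-1}\| - \la tS_m + 2\gamma \la^q \|f_{m-1}\|^{1-q}.
$$
I would then split into two regimes. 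If $\|f_{m-1}\|\le 2\e$, we are already done for that index (the $2\e$ term in the max). Otherwise $\|f_{m-1}\|-\e\ge \|f_{m-1}\|/2$, so $S_m\ge \|f_{m-1}\|/(2A(\e))$; substituting and optimizing over $\la$ (the optimal $\la$ scales like $\|f_{m-1}\|\,(t/(A(\e)\gamma))^{1/(q-1)}$ up to constants) gives a recursion of the shape
$$
\|f_m\|^{q'} \le \|f_{m-1}\|^{q'} - c(q,\gamma)\,t^{q'} A(\e)^{-q'}\|f_{m-1}\|^{q'},
$$
hmm — more precisely, after dividing through one is led (this is the standard manipulation behind Theorem \ref{T5.1}) to controlling the sequence $b_m:=\|f_m\|/(A(\e)+\e)$ and showing $b_m^{q'}\le b_{m-1}^{q'}(1 - \kappa)$ is the wrong form; instead one gets the inverse-power decay by the classical lemma: if $b_m\le b_{m-1}$ and $b_{m-1}-b_m\ge \kappa\, b_{m-1}^{q'}$ (while $\|f_{m-1}\|>2\e$), then $b_m\le C(q)\kappa^{-1/(q-1)}? $ — no, then $b_m \le C(\kappa q)(1+m)^{1/q-1}$. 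Applying that lemma with $\kappa\asymp t^{q'}/(C(q,\gamma))$ and remembering the alternative $\|f_m\|\le 2\e$ produces exactly
$$
\|f_m\| \le \max\Bigl(2\e,\ C(q,\gamma)(A(\e)+\e)\,t\,(1+m)^{1/q-1}\Bigr).
$$

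The main obstacle is the last step: converting the one-step inequality $\|f_m\|\le \|f_{m-1}\| - \kappa\,\|f_{m-1}\|^{q'}(A(\e)+\e)^{-q'+1}$ (valid only while $\|f_{m-1}\|>2\e$) into the closed-form rate $(1+m)^{1/q-1}$, and doing the bookkeeping for the stopping alternative $\|f_{m-1}\|\le 2\e$ so that once the residual drops below $2\e$ it is controlled for all larger indices. This is a routine but slightly delicate real-variable lemma about sequences satisfying $x_{m}\le x_{m-1}(1 - c x_{m-1}^{1/(q-1)})$; the exponent $1/q-1 = -(q'-1)/q'\cdot$ arithmetic and the dependence of the constant on $q$ and $\gamma$ (through $\gamma^{1/(q-1)}$, as in $c_1$ of Theorem \ref{T2.3}) need to be tracked carefully. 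Everything else — the lower bound for $S_m$ via membership in $A_1(\D)$, the use of the modulus of smoothness, and the sign case analysis — is identical in spirit to the proof of Theorem \ref{T2.3} already given above.
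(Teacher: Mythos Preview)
The paper does not give its own proof of Theorem~\ref{T5.1}; it is quoted as a known result from \cite{Tbook}, p.~347, and no argument is supplied here. Your outline is precisely the standard proof given in that reference: replace the sparsity-based lower bound for $S_m$ in the proof of Theorem~\ref{T2.3} by $S_m\ge A(\e)^{-1}(\|f_{m-1}\|-\e)$ coming from $f^\e/A(\e)\in A_1(\D)$, run the modulus-of-smoothness step identically, and then convert the one-step decrease into the rate $(1+m)^{1/q-1}$ by the standard sequence lemma (Lemma~2.16 in \cite{Tbook}). So there is nothing in the present paper to compare against, and your approach matches the cited source.

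One small remark on the rough patch you flag yourself: the exponent bookkeeping in the sequence lemma is indeed the only place to be careful. The form you want is: if $0<a_m\le a_{m-1}(1- r a_{m-1}^{\,\lambda})$ with $a_0\le 1$ and $r\lambda\le 1$, then $a_m\le (1+r\lambda m)^{-1/\lambda}$; here one applies it with $\lambda=q'$ (so that $-1/\lambda=1/q-1$) to the normalized sequence $b_m=\|f_m\|/(A(\e)+\e)$. Your tentative version ``$b_{m-1}-b_m\ge \kappa b_{m-1}^{q'}$'' has the exponent off by one and would give $m^{-(q-1)}$ rather than $m^{1/q-1}$, but you clearly recognize this step needs tightening and the remaining argument is routine.
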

Both   Theorem \ref{T5.1} and Theorem \ref{T2.3} provide stability of the WCGA with respect to noise. 
  In order to apply them for noisy data we interpret $f_0$ as a noisy version of a signal and $f^\e$ as a noiseless version of a signal. Then, assumption $f^\e/A(\e)\in A_1(\D)$ describes our smoothness assumption on the noiseless signal and assumption $f^\e \in \Sigma_K(\D)$ describes our structural assumption on the noiseless signal. 
  In fact, Theorem \ref{T5.1}  simultaneously takes care of two issues: noisy data and approximation in an interpolation space.
  Theorem \ref{T5.1} can be applied for approximation of $f_0$ under assumption that $f_0$ belongs  to one  
of interpolation spaces between $X$ and the space generated by the $A_1(\D)$-norm (atomic norm).

\section{Sparse trigonometric approximation}

Sparse trigonometric approximation of periodic functions began by the paper of S.B. Stechkin \cite{Ste}, who used it in the criterion for absolute convergence of trigonometric series. R.S. Ismagilov \cite{I} found nontrivial estimates for $m$-term approximation of functions with singularities of the type $|x|$ and gave interesting and important applications to the widths of Sobolev classes. He used a deterministic method based on number theoretical constructions. 
His method was developed by V.E. Maiorov \cite{M}, who used a method based on Gaussian sums.  Further strong results were obtained in \cite{DT1} with the help of a nonconstructive result from finite dimensional Banach spaces due to E.D. Gluskin \cite{G}. Other powerful nonconstructive method, which is based on a probabilistic argument, was used by Y. Makovoz \cite{Mk} and by E.S. Belinskii \cite{Be1}. Different methods were created in \cite{T29}, \cite{KTE1}, \cite{T1a}, \cite{Tappr} for proving lower bounds for function classes.
It was discovered in \cite{DKTe} and \cite{T12} that greedy algorithms can be used for constructive $m$-term approximation with respect to the trigonometric system. We demonstrate in \cite{T150} how greedy algorithms can be used to prove optimal or best known upper bounds for $m$-term approximation of classes of functions with mixed smoothness.  
It is a simple and powerful method of proving upper bounds.   The reader can find a detailed study of $m$-term approximation of classes of functions with mixed smoothness, including small smoothness, in 
the paper \cite{Rom1} by A.S. Romanyuk and in \cite{T150}, \cite{T152}. We note that in the case $2<p<\infty$ the upper bounds in \cite{Rom1} are not constructive.

We discuss some approximation problems for classes of functions with mixed smoothness. We define these classes momentarily.  
We will begin with the case of univariate periodic functions. Let for $r>0$ 
\be\label{6.3}
F_r(x):= 1+2\sum_{k=1}^\infty k^{-r}\cos (kx-r\pi/2) 
\ee
and
\be\label{6.4}
W^r_p := \{f:f=\varphi \ast F_r,\quad \|\varphi\|_p \le 1\}.  
\ee
It is well known that for $r>1/p$ the class $W^r_p$ is embedded into the space of continuous functions $C(\T)$. In a particular case of $W^1_1$ we also have embedding into $C(\T)$.

In the multivariate case for $\bx=(x_1,\dots,x_d)$ denote
$$
F_r(\bx) := \prod_{j=1}^d F_r(x_j)
$$
and
$$
\bW^r_p := \{f:f=\varphi\ast F_r,\quad \|\varphi\|_p \le 1\}.
$$
For $f\in \bW^r_p$ we will denote $f^{(r)} :=\varphi$ where $\varphi$ is such that $f=\varphi\ast F_r$.

The main results of Section 2 of \cite{T150} are the following two theorems. We use the notation 
$\beta:=\beta(q,p):= 1/q-1/p$ and $\eta:=\eta(q):= 1/q-1/2$. In the case of trigonometric system $\Tr^d$ we drop it from the notation:
$$
\sigma_m(\bW)_p := \sigma_m(\bW,\Tr^d)_p.
$$
\begin{Theorem}\label{T2.8I} We have
$$
 \sigma_m(\bW^r_q)_{p}
  \asymp  \left\{\begin{array}{ll} m^{-r+\beta}(\log m)^{(d-1)(r-2\beta)}, & 1<q\le p\le 2,\quad r>2\beta,\\
 m^{-r+\eta}(\log m)^{(d-1)(r-2\eta)}, & 1<q\le 2\le p<\infty,\quad r>1/q,\\ 
 m^{-r}(\log m)^{r(d-1)}, & 2\le q\le p<\infty, \quad r>1/2.\end{array} \right.
$$
\end{Theorem}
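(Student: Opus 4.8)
The plan is to establish matching upper and lower bounds separately. For the upper bound I would exploit the constructive greedy machinery together with the dyadic (hyperbolic cross) block decomposition of the trigonometric system; for the lower bound I would produce explicit extremal functions adapted to each smoothness range and invoke known volume / width estimates.

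\textbf{Upper bound.} Write $f\in\bW^r_q$ through its Littlewood--Paley decomposition $f=\sum_{\mathbf s}\delta_{\mathbf s}(f)$, where $\delta_{\mathbf s}(f)$ collects the Fourier coefficients with frequencies in the dyadic box determined by $\mathbf s\in\Z_+^d$; the mixed-smoothness hypothesis gives $\big\|\big(\sum_{\mathbf s}2^{2r|\mathbf s|_1}|\delta_{\mathbf s}(f)|^2\big)^{1/2}\big\|_q\le C$. I would then apply, on each block separately, a thresholding/greedy selection of the largest Fourier coefficients and bound the resulting error in $L_p$ using the Nikol'skii inequality for trigonometric polynomials with spectrum in a box of volume $\asymp 2^{|\mathbf s|_1}$ (which costs a factor $2^{|\mathbf s|_1(1/q-1/p)_+}$ when $p>q$) together with the Littlewood--Paley equivalence $\|g\|_p\asymp\big\|\big(\sum_{\mathbf s}|\delta_{\mathbf s}(g)|^2\big)^{1/2}\big\|_p$, valid for $1<p<\infty$. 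Allocating the budget of $m$ terms among the levels $|\mathbf s|_1=n$ — there are $\asymp n^{d-1}$ blocks at level $n$ — and optimizing the split, one is led exactly to the three regimes: the competition is between the smoothness gain $2^{-rn}$, the Nikol'skii/Hausdorff--Young loss ($2^{n\beta}$ for $p\le 2$, $2^{n\eta}$ for $p\ge 2$), and the polynomial-in-$n$ number of blocks, which produces the $(\log m)^{(d-1)(\cdot)}$ powers. The restrictions $r>2\beta$, $r>1/q$, $r>1/2$ are precisely the thresholds making the relevant series over $n$ (equivalently, the greedy tail) summable. Every step — block projection, coefficient thresholding, de la Vall\'ee Poussin type kernels — is explicit, so the construction is constructive, which is the point of \cite{T150}.

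\textbf{Lower bound.} Here I would test against functions concentrated on a single hyperbolic layer $\{|\mathbf s|_1=n\}$ (or a controlled union of such layers) with $2^n\asymp m$, normalized to lie in $\bW^r_q$, and estimate from below the $L_p$ error of an arbitrary $m$-term trigonometric approximant. In the range $q\le p\le 2$ a suitably ``spread'' function together with a duality/Parseval argument shows that altering $m$ of its $\asymp m\log^{d-1}m$ active frequencies cannot reduce the $L_p$ norm beyond the claimed order. For the ranges involving $p>2$ I expect to need the nonconstructive finite-dimensional estimates — Gluskin-type lower bounds for the relevant Kolmogorov widths, or the probabilistic constructions of Makovoz and Belinskii — applied block by block and transferred to the hyperbolic-cross setting, which is the classical route for sharp best $m$-term lower bounds.

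\textbf{Main obstacle.} The genuinely hard part is the lower bound in the regimes with $p>2$: the direct function-theoretic arguments that work for $q\le p\le 2$ break down, and one must import sharp nonconstructive finite-dimensional width estimates while tracking the exact power of $\log m$ across dyadic levels. Matching that logarithmic exponent on both sides — distinguishing $(d-1)(r-2\beta)$ from $(d-1)(r-2\eta)$ from $r(d-1)$ — requires careful level-by-level bookkeeping and a precise check of the threshold conditions on $r$. The upper bound, by contrast, is essentially an optimization once the block decomposition and the Nikol'skii inequalities are in place.
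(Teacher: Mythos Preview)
The paper does not give a self-contained proof of this theorem; it is a survey that records where the result comes from. The first line ($1<q\le p\le 2$) is attributed to \cite{T29}, and for $p\ge 2$ the paper points to \cite{Rom1} (nonconstructive upper bounds) and to \cite{T150} (constructive upper bounds via the WCGA machinery of Section~2, stated here as Theorem~\ref{T2.8C}). So the comparison is with the approach those references take.

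Your upper-bound outline is adequate for the first line and is in the spirit of \cite{T29}: hyperbolic block decomposition, coefficient thresholding, Nikol'skii to move between $L_q$ and $L_p$. For the second and third lines, however, there is a genuine gap. When $p>2$, block-by-block TGA-type thresholding followed by the Nikol'skii inequality does \emph{not} produce the sharp exponent: passing from $L_2$ to $L_p$ via Nikol'skii costs a factor $2^{|\mathbf s|_1(1/2-1/p)}$ on each block, and after optimizing the allocation you are left with an extra $m^{1/2-1/p}$ in the main power --- you get $m^{-r+1/2-1/p}(\log m)^{\dots}$ instead of $m^{-r}(\log m)^{r(d-1)}$ in the third case. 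Your phrase ``Hausdorff--Young loss $2^{n\eta}$ for $p\ge 2$'' names the correct target exponent but is not delivered by the tools you list; the trigonometric system is not a greedy (nor even quasi-greedy) basis in $L_p$ for $p\ne 2$, so coefficient thresholding is too crude here. What actually closes this gap is a separate, nontrivial input on $m$-term trigonometric approximation in $L_p$, $p>2$: either the nonconstructive Gluskin-type estimate (this is how \cite{DT1} and \cite{Rom1} proceed, and the paper explicitly flags those upper bounds as nonconstructive), or the constructive route through the WCGA and the Lebesgue-type inequality of Theorem~\ref{T1.2}, which is precisely what \cite{T150} supplies and what this survey is advertising. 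Either of these is a substantial additional ingredient, not bookkeeping.

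Your lower-bound plan is reasonable and identifies the right tools. But your ``main obstacle'' paragraph has the difficulty partly misplaced: for $p>2$ the upper bound is at least as delicate as the lower bound, and your sketch does not yet contain the idea that resolves it.
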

\begin{Theorem}\label{T2.9I} We have
$$
 \sigma_m(\bW^r_q)_{\infty}
  \ll  \left\{\begin{array}{ll}   m^{-r+\eta}(\log m)^{(d-1)(r-2\eta)+1/2}, & 1<q\le 2,\quad r>1/q,\\ 
 m^{-r}(\log m)^{r(d-1)+1/2}, & 2\le q<\infty, \quad r>1/2.\end{array} \right.
$$
\end{Theorem}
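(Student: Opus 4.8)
**The plan is to prove Theorem \ref{T2.9I} by reducing the $L_\infty$ estimate to the $L_p$ estimate of Theorem \ref{T2.8I} for a carefully chosen finite $p=p(m)$, and then optimizing.** The two upper bounds in Theorem \ref{T2.9I} differ from the corresponding $L_p$ bounds in Theorem \ref{T2.8I} only by a factor $(\log m)^{1/2}$, which is exactly the price one pays for the Nikol'skii-type embedding $\|g\|_\infty \ll (\log N)^{?}\,\|g\|_p$ when $g$ is a trigonometric polynomial whose spectrum lives in a hyperbolic cross of order $N$, together with the choice $p\asymp \log m$. So the first step is to recall that the constructive $m$-term approximant built in \cite{T150} for the $L_p$-result of Theorem \ref{T2.8I} is in fact a trigonometric polynomial supported on a hyperbolic cross $Q_n$ with $2^n\asymp m$ (up to logarithmic factors in the number of terms), and that its construction does not really depend on $p$ in an essential way — the same greedy-built polynomial works simultaneously for a range of $p$.

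\textbf{Step 1 (set-up).} Fix $f\in\bW^r_q$. I would take the $m$-term trigonometric approximant $t_m$ produced by the greedy algorithm as in the proof of Theorem \ref{T2.8I}, but applied with the exponent $p_0:=\max(2,\log m)$ (or $p_0 := c\log m$ for a suitable constant), so that $t_m$ has $\asymp m$ terms and
$$
\|f-t_m\|_{p_0} \ll m^{-r+\eta(q)}(\log m)^{(d-1)(r-2\eta(q))}
$$
in the range $1<q\le 2$, and the analogous bound with $\eta$ replaced appropriately when $2\le q<\infty$. Here I use that for $p_0\asymp\log m$ the constants $C(p_0)$ hidden in Theorem \ref{T2.8I} only contribute extra powers of $\log m$ that get absorbed; this is a standard feature of such estimates (the $p$-dependence of the relevant Littlewood–Paley / Nikol'skii constants is polynomial in $p$).

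\textbf{Step 2 (pass from $L_{p_0}$ to $L_\infty$).} The error $f-t_m$ is not a polynomial, so I would split it: $f-t_m = (S_{Q_N}f - t_m) + (f - S_{Q_N}f)$ where $S_{Q_N}$ is the projection onto the hyperbolic cross $Q_N$ with $2^N$ chosen slightly larger than the scale where $t_m$ lives (say $N\asymp \log m$ bigger, so the tail is negligible by the smoothness $r>1/q$). The tail $\|f-S_{Q_N}f\|_\infty$ is controlled directly by the mixed-smoothness assumption and is of smaller order. For the main term, which is a trigonometric polynomial with spectrum in $Q_N$, I apply the Nikol'skii inequality $\|g\|_\infty \ll |Q_N|^{1/p_0}\|g\|_{p_0}$; with $|Q_N|\asymp 2^N N^{d-1}\asymp m(\log m)^{d-1}$ and $p_0\asymp \log m$ one gets $|Q_N|^{1/p_0}\asymp (m(\log m)^{d-1})^{1/\log m}\asymp C$, a bounded factor — which by itself would give no extra log. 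The genuine $(\log m)^{1/2}$ must therefore come from a more refined small-ball / Littlewood–Paley bound: $\|g\|_\infty \ll (\log |Q_N|)^{1/2}\big(\sum_{s}\|\delta_s(g)\|_\infty^2\big)^{1/2}$ combined with $\|\delta_s(g)\|_\infty\ll 2^{s/p_0}\|\delta_s(g)\|_{p_0}$ — this is the standard route for $L_\infty$ bounds on the hyperbolic cross and it injects precisely one factor $(\log m)^{1/2}$ relative to the $L_2$-type sum. Plugging the $L_{p_0}$ blockwise estimates from Step 1 then yields the claimed bounds.

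\textbf{The main obstacle} I expect is Step 2: correctly tracking the $p$-dependence so that the substitution $p\asymp \log m$ produces exactly one extra $(\log m)^{1/2}$ and no more. The delicate point is that the constructive approximant $t_m$ from \cite{T150} was analysed in $L_p$ for fixed $p$, and I need a version where the implied constant's growth in $p$ is explicitly polynomial (coming from the Littlewood–Paley and Marcinkiewicz constants, each $\ll p$ or $\ll p^{1/2}$), so that $C(p_0)^{\text{bounded power}}$ stays $\ll (\log m)^{O(1)}$ and in fact, after the optimization, contributes only to the $(\log m)^{1/2}$. A secondary technical issue is making sure the "$\ll$" (rather than "$\asymp$") in Theorem \ref{T2.9I} is honest — i.e. that we only need the upper bound, so no matching lower bound has to be produced, which simplifies matters considerably. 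Once the $p$-growth bookkeeping is pinned down, assembling the three regimes ($1<q\le 2$ versus $2\le q<\infty$) is routine and exactly parallels the corresponding cases of Theorem \ref{T2.8I}.
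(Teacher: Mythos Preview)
This survey does not contain a proof of Theorem~\ref{T2.9I}; the result is quoted from \cite{T150}, with only the indication that greedy approximation in Banach spaces is the underlying technique and that a constructive analogue of Theorem~\ref{T2.8C} holds for $p=\infty$. A line-by-line comparison with ``the paper's proof'' is therefore not possible here.

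On the merits of your plan: the overall strategy of running the $L_p$ argument with $p_0\asymp\log m$ and then transferring to $L_\infty$ via a Nikol'skii-type inequality is a standard and legitimate device for hyperbolic-cross problems, and it is plausibly close to what \cite{T150} does. The gap in your proposal is that you have not actually located the factor $(\log m)^{1/2}$. In Step~1 you dismiss the $p_0$-dependence of the implied constant in Theorem~\ref{T2.8I} as something that ``gets absorbed''. In Step~2 you then correctly compute that the bare Nikol'skii factor $|Q_N|^{1/p_0}$ is bounded when $p_0\asymp\log m$, and are forced to postulate an inequality
\[
\|g\|_\infty \ll (\log|Q_N|)^{1/2}\Bigl(\sum_{s}\|\delta_s(g)\|_\infty^2\Bigr)^{1/2}
\]
to manufacture the missing half-logarithm. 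This inequality is not standard and you do not prove it; note that $\log|Q_N|\asymp N$ while the number of dyadic blocks $s$ with $\|s\|_1\le N$ is $\asymp N^d$, so the naive Cauchy--Schwarz route gives $N^{d/2}$, not $N^{1/2}$, on the right-hand side. As written, Step~2 does not go through.

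The honest version of the argument is the one you yourself flag in your ``main obstacle'' paragraph: the $(\log m)^{1/2}$ must come from the $p$-growth of the constants hidden in the $L_p$ estimate --- specifically from $\gamma^{1/2}=((p-1)/2)^{1/2}$ in the smoothness modulus of $L_p$ and from the Littlewood--Paley/Marcinkiewicz constants, each contributing a factor $\asymp p^{1/2}$ --- and not from an additional inequality at the $L_\infty$ stage. Until you go back into the greedy construction behind Theorem~\ref{T2.8I} (i.e.\ into \cite{T150}) and verify that the implied constant grows exactly like $p^{1/2}$ and no worse, Step~1 is an assumption rather than a proof. You have correctly identified the crux but not resolved it.
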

The case $1<q\le p\le 2$ in Theorem \ref{T2.8I}, which corresponds to the first line, was proved in \cite{T29} (see also \cite{Tmon}, Ch.4). The proofs from \cite{T29} and \cite{Tmon} are constructive. In \cite{T150} we concentrate on the case $p\ge 2$. We use recently developed techniques on greedy approximation in Banach spaces to prove Theorems \ref{T2.8I} and \ref{T2.9I}. It is important that greedy approximation allows us not only to prove the above theorems but also to provide a constructive way for building the corresponding $m$-term approximants. We give a precise formulation from \cite{T150}.
\begin{Theorem}\label{T2.8C} For $p\in(1,\infty)$ and $\mu>0$ there exist constructive methods $A_m(f,p,\mu)$, which provide for $f\in \bW^r_q$ an $m$-term approximation such that
$$
 \|f-A_m(f,p,\mu)\|_p
 $$
 $$
  \ll  \left\{\begin{array}{ll} m^{-r+\beta}(\log m)^{(d-1)(r-2\beta)}, & 1<q\le p\le 2,\quad r>2\beta+\mu,\\
 m^{-r+\eta}(\log m)^{(d-1)(r-2\eta)}, & 1<q\le 2\le p<\infty,\quad r>1/q+\mu,\\ 
 m^{-r}(\log m)^{r(d-1)}, & 2\le q\le p<\infty, \quad r>1/2+\mu.\end{array} \right.
$$
\end{Theorem}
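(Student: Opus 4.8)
The plan is to reduce everything to an application of the WCGA in $L_p$, using the greedy-approximation machinery of Section 2 together with the known (nonconstructive or semiconstructive) best $m$-term bounds from Theorems \ref{T2.8I} and \ref{T2.9I}. The constructive method $A_m(f,p,\mu)$ will simply be the WCGA with respect to the real trigonometric system $\Tr^d$ normalized in $L_p$, run for $C(t,p,d)\,n\ln(n+1)$ iterations for an appropriate $n\asymp n(m)$, followed by whatever post-processing is needed to present the output as a genuine $m$-term trigonometric polynomial. The key enabling fact is Corollary \ref{Example 2.} (equivalently Proposition \ref{Example 1.}, which rests on Theorem \ref{T2.7}): for $2\le p<\infty$ the WCGA applied to any $f_0\in L_p$ produces, after $C(t,p,d)\,n\ln(n+1)$ iterations, an error bounded by $C\sigma_n(f_0,\Tr^d)_p$. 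So if $f\in\bW^r_q$ and we know $\sigma_n(\bW^r_q)_p$ decays at a certain polynomial-times-log rate, then after $N_n:=C(t,p,d)\,n\ln(n+1)$ iterations the WCGA residual has norm $\ll \sigma_n(f,\Tr^d)_p\ll \sigma_n(\bW^r_q)_p$.

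The main step is the bookkeeping that converts "$N_n$ iterations give error $\sigma_n$" into "$m$ iterations give error $\sigma_{m/\ln m}$". Given a target sparsity $m$, choose $n$ so that $C(t,p,d)\,n\ln(n+1)\le m$, i.e. $n\asymp m/\ln m$. Then $A_m(f,p,\mu):=G_{N_n}(f,\Tr^d_p)$ (the WCGA approximant) is an $m$-term trigonometric polynomial with
$$
\|f-A_m(f,p,\mu)\|_p \ll \sigma_n(\bW^r_q)_p.
$$
Now substitute the rates from Theorem \ref{T2.8I} with $n\asymp m/\ln m$: since $\log n\asymp\log m$, a factor $n^{-r+\beta}=(m/\ln m)^{-r+\beta}=m^{-r+\beta}(\ln m)^{r-\beta}$ appears, and combined with the $(\log n)^{(d-1)(r-2\beta)}$ factor this gives $m^{-r+\beta}(\log m)^{(d-1)(r-2\beta)+(r-\beta)}$ — which is slightly worse than the claimed bound. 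This discrepancy is exactly why the hypothesis carries the extra $\mu$: one uses a small amount of excess smoothness to absorb the spurious logarithmic factor. Concretely, if $r>2\beta+\mu$ one writes $r=r_0$ and notes $m^{-r}(\log m)^{c}\ll m^{-r+\mu/2}\ll$ (desired) once the polynomial gain from $\mu$ dominates the finitely many extra powers of $\log m$ coming from the $n\asymp m/\ln m$ substitution; the same device handles the second line (using $r>1/q+\mu$) and the third (using $r>1/2+\mu$). For $1<q\le p\le 2$ (first line) one instead invokes the constructive proof already available from \cite{T29}, \cite{Tmon}, so no WCGA argument is needed there and the $\mu$ is only a convenience for uniform statement.

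The hard part will be handling the regime $2\le q\le p<\infty$ cleanly, where one wants to apply the WCGA directly in $L_p$ and needs the input best-$m$-term rate $\sigma_n(\bW^r_q)_p$ (third line of Theorem \ref{T2.8I}) — fine — but must also check that the WCGA iteration count $C(t,p,d)n\ln(n+1)$ really is bounded by $m$ with the correct implied constants, and that the residual bound $C\sigma_n(f,\Tr^d)_p$ from Proposition \ref{Example 1.} is being applied to $f_0=f$ itself (not to a nearby sparse signal), so that the $\e=0$ form suffices. A secondary technical point is ensuring the output is literally an $m$-term polynomial rather than an $O(m)$-term one: since $G_{N_n}$ already lies in a span of $N_n\le m$ trigonometric functions, this is automatic, but one should state it. I expect the bulk of the write-up to be the three-line case analysis substituting $n\asymp m/\log m$ into Theorem \ref{T2.8I} and verifying, in each line, that the excess smoothness $\mu$ kills the extra $\log$ powers; everything else is a direct citation of Proposition \ref{Example 1.} / Theorem \ref{T2.7}.
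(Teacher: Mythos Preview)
Your approach has a genuine gap: it reproduces Theorem~\ref{TCW}, not Theorem~\ref{T2.8C}. The paper itself says explicitly that the combination of Theorem~\ref{T1.2} (the WCGA Lebesgue-type inequality for the trigonometric system) with Theorem~\ref{T2.8I} yields Theorem~\ref{TCW}, whose bounds carry the extra factors $(\log m)^{r-\eta}$ (second line) and $(\log m)^{r}$ (third line) compared with Theorem~\ref{T2.8C}. Your proposed device for killing those extra logarithms via the excess smoothness $\mu$ does not work: the target bound in each line of Theorem~\ref{T2.8C} involves the \emph{same} $r$ as the hypothesis, so there is no ``polynomial gain from $\mu$'' to trade against logarithms. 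Concretely, in the third line you obtain $m^{-r}(\log m)^{rd}$ and want $m^{-r}(\log m)^{r(d-1)}$; the chain $m^{-r}(\log m)^{rd}\ll m^{-r+\mu/2}\ll m^{-r}(\log m)^{r(d-1)}$ fails at the second step, since $m^{\mu/2}\gg(\log m)^{r(d-1)}$. The condition $r>1/2+\mu$ does not loosen the conclusion; it is a genuine strengthening of the hypothesis needed for a different reason.

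The constructive methods $A_m(f,p,\mu)$ in \cite{T150} are not ``run the WCGA on $f$ in $L_p$ with the trigonometric dictionary.'' They are built in two stages: first a linear preliminary approximation of $f$ by hyperbolic-cross (de la Vall\'ee Poussin type) polynomials, reducing the problem to approximating finitely many dyadic blocks $\delta_s(f)$; then a greedy selection (WCGA applied in a controlled way, block by block, in an auxiliary $A_1$-type norm setting where Theorem~\ref{T5.1} rather than Theorem~\ref{T1.2} governs the rate) that picks the right number of frequencies from each block. The role of $\mu$ is to guarantee that the tail from the preliminary hyperbolic-cross truncation is already of the desired order, and to ensure summability across the blocks; it is not a device for erasing the $\ln(n+1)$ from the Lebesgue-type inequality. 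In short, you have correctly identified the WCGA as the engine, but applied it at the wrong level: to get the sharp bounds of Theorem~\ref{T2.8C} one must apply greedy approximation to the dyadic pieces rather than to $f$ globally, and this is what the extra $\mu$ pays for.
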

Similar modification of Theorem \ref{T2.9I} holds for $p=\infty$.
We do not have matching lower bounds for the upper bounds in Theorem \ref{T2.9I} in the case of approximation in the uniform norm $L_\infty$. 
 
As a direct corollary of Theorems \ref{T1.2} and \ref{T2.8I} we obtain the following result.
\begin{Theorem}\label{TCW} Let $p\in [2,\infty)$. Apply the WCGA with weakness parameter $t\in (0,1]$ to $f\in L_p$ with respect to the real trigonometric system ${\mathcal R}{\mathcal T}_p^d$. If $f\in \bW^r_q$, then we have
$$
 \|f_m\|_{p}
  \ll  \left\{\begin{array}{ll}  
 m^{-r+\eta}(\log m)^{(d-1)(r-2\eta)+r-\eta}, & 1<q\le 2,\quad r>1/q,\\ 
 m^{-r}(\log m)^{rd}, & 2\le q<\infty, \quad r>1/2.\end{array} \right.
$$
\end{Theorem}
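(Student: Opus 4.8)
The plan is to combine the per-function Lebesgue-type inequality of Theorem \ref{T1.2} with the known rate of decay of the best $m$-term trigonometric approximation of the classes $\bW^r_q$ given in Theorem \ref{T2.8I}, in the range $p\in[2,\infty)$. The key observation is that Theorem \ref{T1.2} guarantees, for any $f_0\in L_p$, that after $n\asymp m\ln(m+1)$ iterations the residual satisfies $\|f_n\|_p\le C\sigma_m(f_0,\Tr^d)_p$. So the natural approach is: given a target number of iterations $n$, invert the relation $n\asymp m\ln(m+1)$ to find the largest $m=m(n)$ for which Theorem \ref{T1.2} applies, and then substitute $\sigma_{m(n)}(\bW^r_q)_p$ from Theorem \ref{T2.8I}.

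First I would make precise the inversion of $n = C(t,p,d)\, m\ln(m+1)$. Up to constants depending on $t,p,d$ one has $m(n)\asymp n/\ln(n+1)$, so that $\ln(m(n)+1)\asymp \ln(n+1)$ as well. Then for a given number of iterations, relabel: we want to express the bound in terms of the iteration count, which in the statement is called $m$ (abuse of notation — the $m$ in $\|f_m\|_p$ is the iteration index, whereas the $m$ in Theorem \ref{T2.8I} is the sparsity parameter). So, writing $M$ for the sparsity level and $m$ for the number of iterations, Theorem \ref{T1.2} gives $\|f_m\|_p\le C\sigma_M(\bW^r_q)_p$ whenever $m\ge C(t,p,d) M\ln(M+1)$, equivalently whenever $M\le c\, m/\ln(m+1)$ for an appropriate small constant. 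Plugging $M\asymp m/\ln(m+1)$ into each of the two relevant lines of Theorem \ref{T2.8I} (the cases $1<q\le 2\le p<\infty$ and $2\le q\le p<\infty$, since here $p\ge 2$):

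For $2\le q<\infty$, $r>1/2$: $\sigma_M(\bW^r_q)_p\asymp M^{-r}(\log M)^{r(d-1)}$. With $M\asymp m/\log m$ this is $\asymp (m/\log m)^{-r}(\log m)^{r(d-1)} = m^{-r}(\log m)^{r}(\log m)^{r(d-1)} = m^{-r}(\log m)^{rd}$, matching the second line. For $1<q\le 2$, $r>1/q$: $\sigma_M(\bW^r_q)_p\asymp M^{-r+\eta}(\log M)^{(d-1)(r-2\eta)}$ with $\eta=1/q-1/2$; substituting $M\asymp m/\log m$ picks up an extra factor $(\log m)^{r-\eta}$, giving $m^{-r+\eta}(\log m)^{(d-1)(r-2\eta)+r-\eta}$, matching the first line. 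One should also check the requirement $r>1/q$ (resp. $r>1/2$) is exactly the hypothesis imposed, and that $\log M\asymp\log m$ so replacing $\log M$ by $\log m$ in the logarithmic factors is harmless; this is where a little care with the constants in Theorem \ref{T1.2} is needed but no real obstacle arises.

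The main (and essentially only) obstacle is purely bookkeeping: correctly tracking the logarithmic powers through the substitution $M\mapsto m/\log m$, and being careful that the "$m$" in Theorem \ref{TCW} denotes the iteration count while the "$m$" in Theorems \ref{T1.2} and \ref{T2.8I} denotes sparsity. There is no new analytic content — the theorem is, as the text says, "a direct corollary." I would therefore present the proof as: (i) recall Theorem \ref{T1.2} and solve for the admissible sparsity $M$ in terms of the iteration count; (ii) recall the two applicable lines of Theorem \ref{T2.8I}; (iii) substitute and simplify the powers of $\log m$, arriving at the two displayed bounds.
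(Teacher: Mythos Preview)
Your proposal is correct and follows exactly the approach indicated in the paper, which states the result as ``a direct corollary of Theorems \ref{T1.2} and \ref{T2.8I}''; your inversion $M\asymp m/\log m$ and the subsequent substitution into the two relevant lines of Theorem \ref{T2.8I} reproduce the claimed bounds precisely. The only minor point to add is that Theorem \ref{T2.8I} in the third line assumes $2\le q\le p$, whereas Theorem \ref{TCW} allows any $2\le q<\infty$; this is handled by the trivial embedding $\bW^r_q\subset \bW^r_2$ for $q\ge 2$ (or $\bW^r_q\subset \bW^r_p$ for $q\ge p$), so no new idea is needed.
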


The reader can find results on best $m$-term approximation as well as results on constructive $m$-term approximation for Besov type classes in the paper \cite{T150}. Some new results in the case of small smoothness are contained in \cite{T152}. 

\section{Tensor product approximations}

In the paper \cite{BT} we study multilinear approximation (nonlinear tensor product approximation) of functions.
 For a function $f(x_1,\dots,x_d)$ denote
$$
\Th_M(f)_X:=\inf_{\{u^i_j\},  j=1,\dots,M, i=1,\dots,d}\|f(x_1,\dots,x_d) - \sum_{j=1}^M\prod_{i=1}^d u^i_j(x_i)\|_X
$$
and for a function class $F$ define
$$
\Th_M(F)_X:=\sup_{f\in F} \Th_M(f)_X.
$$
In this section we use the notation $M$ instead of $m$ for the number of terms in an approximant because this notation is a standard one in the area. In the case $X=L_p$ we write $p$ instead of $L_p$ in the notation.
In other words we are interested in studying $M$-term approximations of functions with respect to the dictionary
$$
\Pi^d := \{g(x_1,\dots,x_d): g(x_1,\dots,x_d)=\prod_{i=1}^d u^i(x_i)\}
$$
where $u^i(x_i)$ are arbitrary univariate functions. 
We discuss the case of $2\pi$-periodic functions of $d$ variables and approximate them in the $L_p$ spaces. Denote by $\Pi^d_p$ the normalized in $L_p$ dictionary $\Pi^d$ of $2\pi$-periodic functions. We say that a dictionary $\D$ has a tensor product structure if all its elements have a form of products $u^1(x_1)\cdots u^d(x_d)$ of univariate functions $u^i(x_i)$, $i=1,\dots,d$. Then any dictionary with tensor product structure is a subset of $\Pi^d$. 
The classical example of a dictionary with tensor product structure is the $d$-variate trigonometric system $\{e^{i(\bk,\bx)}\}$. Other examples include the hyperbolic wavelets and the hyperbolic wavelet type system $\U^d$ defined in \cite{T69}. 

Modern problems in approximation, driven by applications in biology, medicine, and engineering, are being formulated in very high dimensions, which brings to the fore new phenomena. For instance, partial differential equations in a phase space of large spacial dimensions (e.g. Schr\"odinger and Fokker-Plank equations) are very important in applications. It is known (see, for instance, \cite{DDGS}) that such equations involving large number of spacial variables pose a serious computational challenge because of the so-called {\it curse of dimensionality}, which is caused by the use of classical notions of smoothness as the regularity characteristics of the solution. The authors of \cite{DDGS} show that replacing the classical smoothness assumptions by structural assumptions in terms of sparsity with respect to the dictionary $\Pi^d$, they overcome the above computational challenge. They prove that the solutions of certain high-dimensional equations inherit sparsity, based on tensor product decompositions, from given data. Thus, our algorithms, which provide good sparse approximation with respect to $\Pi^d$ for individual functions might be useful in applications to PDEs of the above type. 
The nonlinear tensor product approximation is very important in numerical applications. We refer the reader to the monograph \cite{H} which presents the state of the art on the topic. Also, the reader can find a very recent discussion of related results in \cite{SU}. 

In the case $d=2$ the multilinear approximation problem is the classical problem of bilinear approximation. In the case of approximation in the $L_2$ space the bilinear approximation problem is closely related to the problem of singular value decomposition (also called Schmidt expansion) of the corresponding integral operator with the kernel $f(x_1,x_2)$. There are known results on the rate of decay of errors of best bilinear approximation in $L_p$ under different smoothness assumptions on $f$. We only mention some known results for classes of functions with mixed smoothness. We study the classes $\bW^r_q$ of functions with bounded mixed derivative   defined above in Section 5.

The problem of estimating $\Th_M(f)_2$ in case $d=2$ (best $M$-term bilinear approximation in $L_2$) is a classical one and was considered for the first time by E. Schmidt \cite{S} in 1907. For many function classes $F$ an asymptotic behavior of
$\Th_M(F)_p$ is known. For instance, the relation
\begin{equation}\label{1.1}
\Th_M(\bW^r_q)_p \asymp M^{-2r + (1/q-\max(1/2,1/p))_+}
\end{equation}
for $r>1$ and $1\le q\le p \le \infty$ follows from more general results in \cite{T32}.
In the case $d>2$ almost nothing is known. There is (see \cite{T35}) an upper estimate in the case $q=p=2$
\begin{equation}\label{1.2}
\Th_M(\bW^r_2)_2 \ll M^{-rd/(d-1)} .  
\end{equation}

Results of \cite{BT} are around the bound (\ref{1.2}). First of all we discuss the lower bound 
matching the upper bound (\ref{1.2}). In the case $d=2$ the lower bound 
\begin{equation}\label{1.3}
\Th_M(W^r_p)_p \gg M^{-2r},\qquad 1\le p\le \infty ,  
\end{equation}
follows from more general results in \cite{T32} (see (\ref{1.1}) above). A stronger result 
\begin{equation}\label{1.4}
\Th_M(W^r_\infty)_1 \gg M^{-2r}  
\end{equation}
follows from Theorem 1.1 in \cite{T46}. 

We could not prove the lower bound matching the upper bound (\ref{1.2}) for $d>2$. Instead, we proved in \cite{BT} a weaker lower bound. For a function $f(x_1,\dots,x_d)$ denote
$$
\Th^b_M(f)_X:=\inf_{\{u^i_j\}, \|u^i_j\|_X\le b\|f\|_X^{1/d}}\|f(x_1,\dots,x_d) - \sum_{j=1}^M\prod_{i=1}^d u^i_j(x_i)\|_X
$$
and for a function class $F$ define
$$
\Th^b_M(F)_X:=\sup_{f\in F} \Th^b_M(f)_X.
$$
In \cite{BT} we proved the following lower bound (see Corollary \ref{C2.2})
$$
\Th^b_M(\bW^r_\infty)_{ 1} \gg (M\ln M)^{-\frac{rd}{d-1}}.
$$
This lower bound indicates that probably the exponent $\frac{rd}{d-1}$ is the right one in the power decay of the $\Th_M(\bW^r_p)_p$. 

Secondly, we discuss some upper bounds which extend the bound (\ref{1.2}). The relation (\ref{1.1}) shows that for $2\le p\le \infty$ in the case $d=2$ one has
\begin{equation}\label{1.5}
\Th_M(\bW^r_2)_p \ll M^{-2r} .  
\end{equation}

In \cite{BT} we extend (\ref{1.5}) for $d>2$.
\begin{Theorem}\label{T1.1} Let $2\le p<\infty$ and $r> (d-1)/d$. Then
$$
\Th_M(\bW^r_2)_p \ll  \left(\frac{M}{(\log M)^{d-1}}\right)^{-\frac{rd}{d-1}}.
$$
\end{Theorem}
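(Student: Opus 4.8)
The plan is to realize the $M$-term tensor-product approximation of a function $f\in\bW^r_2$ by applying the WCGA (via Theorem~\ref{T2.7}) to a carefully chosen dictionary with tensor product structure, so that the constructive Lebesgue-type inequality reduces the problem to estimating the error of best $m$-term approximation from that dictionary. The natural candidate dictionary is the hyperbolic-wavelet type system $\U^d$ of \cite{T69} (or the hyperbolic wavelets), which is a tensor product dictionary: each of its elements is a product $u^1(x_1)\cdots u^d(x_d)$ of univariate functions, hence lies in $\Pi^d$. First I would recall, or establish, that $\U^d$ normalized in $L_p$ is $(1,D)$-unconditional (or satisfies assumption {\bf A3}) with the appropriate parameters $V$, $r$-exponent for the relevant range of sparsity $K\le D$; this is exactly the kind of hypothesis under which Theorem~\ref{T2.7} (equivalently the $(1,D)$-unconditional variant discussed after Proposition~\ref{Example 5.}) gives
$$
\|f_{C(t,p,d)K^{a}\ln (K+1)}\|_p \le C\,\sigma_K(f_0,\U^d)_p ,
$$
with $a=rq'$ determined by the Nikol'skii-type constants of $\U^d$ in $L_p$, $2\le p<\infty$ (here $q=2$, $\gamma=(p-1)/2$).

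Second, I would estimate $\sigma_m(\bW^r_2,\U^d)_p$, i.e. the rate of best $m$-term approximation of the mixed-smoothness class by the hyperbolic-wavelet dictionary in $L_p$, $2\le p<\infty$. This is where the bound $M^{-rd/(d-1)}$ should come from: expanding $f\in\bW^r_2$ in $\U^d$, the coefficients on the dyadic blocks indexed by $\bs=(s_1,\dots,s_d)$ decay like $2^{-r|\bs|_1}$ in an $\ell_2$-block sense, and choosing the $m$ largest terms across all blocks (a greedy selection in the coefficient space) yields, by the standard Stechkin-type argument combined with the Littlewood--Paley/embedding bound for $\U^d$ in $L_p$, an error of order $(m/(\log m)^{d-1})^{-rd/(d-1)}$ — the logarithmic loss being the usual price of the hyperbolic-cross geometry (number of blocks with $|\bs|_1\le n$ is $\asymp n^{d-1}$). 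This is precisely the constructive mechanism already used in \cite{T150}; I would either cite the corresponding best $m$-term estimate for $\bW^r_2$ in $L_p$ against $\U^d$ or reproduce the short Stechkin-type computation.

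Third, combining the two steps: run the WCGA with respect to $\Pi^d$ — or more precisely with respect to $\U^d\subset\Pi^d$ — on $f$ for $n\asymp m\,(\log m)^{?}$ iterations so that the effective sparsity parameter inside the Lebesgue inequality is $m$; since each WCGA step adds one element of $\U^d\subset\Pi^d$, the output is an $M$-term element of $\Pi^d$ with $M\asymp m\,(\log m)^{\,d-1}$ (absorbing the $K^{a}\ln(K+1)$ factor), and its error is $\le C\sigma_m(\bW^r_2,\U^d)_p\ll (m/(\log m)^{d-1})^{-rd/(d-1)}$. Re-expressing in terms of $M$ gives exactly $\Th_M(\bW^r_2)_p\ll \big(M/(\log M)^{d-1}\big)^{-rd/(d-1)}$, and constructivity is inherited from the WCGA. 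The restriction $r>(d-1)/d$ enters to guarantee convergence/embedding (one needs $r$ large enough that $f\in\bW^r_2$ lies in $C(\Td)$ or at least in $L_p$ with summable block norms, and that the exponent $rd/(d-1)$ is positive and the greedy sum converges).

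The main obstacle I expect is the second step done with the right logarithmic power: getting the sharp $(\log M)^{d-1}$ factor (rather than a larger power) in the best $m$-term estimate $\sigma_m(\bW^r_2,\U^d)_p$ for $p>2$, because in $L_p$ with $p>2$ one must use the $L_p$-bound for $\U^d$-expansions (a Littlewood--Paley inequality with an $\ell_2$-in-blocks norm, losing nothing) together with a careful block-wise greedy selection — a naive term-by-term Stechkin bound would overcount the hyperbolic cross. A secondary technical point is verifying the $(1,D)$-unconditionality / {\bf A3} property of $\U^d$ in $L_p$ with constants uniform in the depth $D$ (so that one may take $D=\infty$ as in the remarks following Proposition~\ref{Example 5.}); this should follow from the corresponding univariate property of $\U$ tensored $d$ times, but must be checked to control the exponent $a=rq'$ and hence the final logarithmic loss.
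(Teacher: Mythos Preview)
Your proposal has a genuine gap at the second step, and it cannot be repaired within the framework you outline. The rate $M^{-rd/(d-1)}$ is \emph{not} the rate of best $m$-term approximation of $\bW^r_2$ from any fixed tensor-product basis such as $\U^d$ (or the trigonometric system). For those dictionaries the correct order is $\sigma_m(\bW^r_2,\U^d)_p\asymp m^{-r}(\log m)^{r(d-1)}$ for $2\le p<\infty$; see the paper's own Theorem~\ref{T2.8I} (third line) and the explicit remark preceding Theorem~\ref{T1.1} that approximation ``with respect to special systems with tensor product structure \dots\ provides error bounds, which are not as good as best $m$-term approximation with respect to $\Pi^d$ (we have exponent $r$ instead of $\frac{rd}{d-1}$ for $\Pi^d$).'' The heuristic you give --- block decay $2^{-r|\bs|_1}$ plus Stechkin --- is exactly what produces the exponent $r$, not $rd/(d-1)$. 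The improved exponent comes precisely from the fact that each term in $\Pi^d$ is a \emph{free} rank-one tensor $u^1(x_1)\cdots u^d(x_d)$, not an element of a fixed basis; already for $d=2$ a single bilinear term can absorb an entire $N\times N$ rank-one block of Fourier coefficients, which no bounded number of basis elements can do. Consequently, running WCGA with respect to $\U^d$ and then invoking a Lebesgue-type inequality can never reach the target bound.

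The paper's proof is of an entirely different nature and is explicitly \emph{nonconstructive}: it proceeds by induction on $d$, with the base case $d=2$ supplied by the bilinear estimates of \cite{T35}, which in turn rest on Kashin's probabilistic bounds for the Kolmogorov widths $d_n(W^r_2,L_\infty)$. The inductive step combines an approximation in $d-1$ variables with a one-dimensional bilinear step. No greedy algorithm enters. In fact the paper states that the best currently known \emph{constructive} bound (via WCGA with respect to $\Pi^d$, Theorems~4.3--4.4 of \cite{BT}) carries an unavoidable loss of $\bt/(d-1)=(1/2-1/p)/(d-1)$ in the exponent, and that ``it would be interesting to find a constructive proof of Theorem~\ref{T1.1}.'' So what you are proposing --- a constructive proof with the sharp exponent --- is listed as an open problem, and your route via $\U^d$ would give only the much weaker exponent $r$.
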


The proof of Theorem \ref{T1.1} in \cite{BT} is not constructive. It goes by induction and uses 
a nonconstructive bound in the case $d=2$, which is obtained in \cite{T35}. The corresponding proof from \cite{T35} uses the bounds for the Kolmogorov width $d_n(W^r_2, L_\infty)$, proved by Kashin \cite{Ka}. Kashin's proof is a probabilistic one, which provides existence of a good linear subspace for approximation, but there is no known explicit constructions of such subspaces. This problem is related to a problem from compressed sensing  on construction of good matrices with Restricted Isometry Property. It is an outstanding difficult open problem.  
In \cite{BT} we discuss constructive ways of building good multilinear approximations. The simplest way would be to use known results about $M$-term approximation with respect to special systems with tensor product structure. However, this approach (see \cite{T69}) provides error bounds, which are not as good as best $m$-term approximation with respect to $\Pi^d$ (we have exponent $r$ instead of $\frac{rd}{d-1}$ for $\Pi^d$). It would be very interesting  to provide a constructive multilinear approximation method with the same order of the error as the best $m$-term approximation.  

As we pointed out in a discussion of Theorem \ref{T1.1} the upper bound in Theorem \ref{T1.1} is proved with a help of probabilistic results. There is no known deterministic constructive methods (theoretical algorithms), which provide the corresponding upper bounds.
In  \cite{BT} we apply greedy type algorithms to obtain upper estimates of $\Theta_M(\bW^r_2)_p$. The important feature of our proof is that it is deterministic and moreover it is constructive. Formally, the optimization problem
$$
\Th_M(f)_X:=\inf_{\{u^i_j\},  j=1,\dots,M, i=1,\dots,d}\|f(x_1,\dots,x_d) - \sum_{j=1}^M\prod_{i=1}^d u^i_j(x_i)\|_X
$$
is deterministic: one needs to minimize over $u^i_j$. However,  minimization by itself does not provide any upper estimate. It is known (see \cite{DMA}) that simultaneous optimization over many parameters is a very difficult problem. Thus, in nonlinear $M$-term approximation we look for methods (algorithms), which provide approximation close to best $M$-term approximation and at each step solve an optimization problem over only one parameter ($\prod_{i=1}^d u^i_j(x_i)$ in our case). In \cite{BT} we  provide such an algorithm for estimating $\Theta_M(f)_p$. We call this algorithm {\it constructive} because it provides an explicit construction with feasible one parameter optimization steps. We stress that in the setting of approximation in an infinite dimensional Banach space, which is considered in \cite{BT}, the use of term {\it algorithm} requires some explanation. In that paper we discuss only theoretical aspects of the efficiency (accuracy) of $M$-term approximation and possible ways  to  realize this efficiency.    The {\it greedy algorithms} used in \cite{BT} give a procedure to construct an approximant, which turns out to be a good approximant. The procedure of
constructing a greedy approximant is not a numerical algorithm ready for computational implementation. Therefore, it would be more precise to call this procedure a {\it theoretical greedy algorithm} or {\it stepwise optimizing process}. Keeping this remark in mind we, however, use the term {\it greedy algorithm} in this paper because it has been used in previous papers and has become a standard name for procedures used in \cite{BT} and for more general procedures of this type (see for instance
\cite{D}, \cite{Tbook}). Also, the theoretical algorithms, which we use in \cite{BT}, become 
algorithms in a strict sense if instead of an infinite dimensional setting we consider a finite dimensional setting, replacing, for instance, the $L_p$ space by its restriction on the set of trigonometric polynomials. We note that the greedy-type algorithms are known to be very efficient in numerical applications (see, for instance, \cite{Za} and \cite{SSZ}).

In \cite{BT} we use two very different greedy-type algorithms to provide a constructive multilinear approximant. The first greedy-type algorithm  is based on a very simple dictionary   consisting of shifts of the de la Vall{\'e}e Poussin kernels. The algorithm uses function (dyadic blocks of a function) evaluations  and picks the largest of them. The second greedy-type algorithm  is more complex. It is based on 
the dictionary $\Pi^d$ and uses the Weak Chebyshev Greedy Algorithm with respect to $\Pi^d$ to update the approximant. 
Surprisingly, these two algorithms give the same error bound. For instance, Theorems 4.3 and 4.4 from \cite{BT} give for big enough $r$ the following constructive upper bound for $2\le p<\infty$
$$
\Th_M(\bW^r_2)_p \ll \left(\frac{M}{(\ln M)^{d-1}}\right)^{-\frac{rd}{d-1} +\frac{\bt}{d-1}},\quad \bt:=\frac{1}{2}-\frac{1}{p} .
$$
This constructive upper bound has an extra term $\frac{\bt}{d-1}$ in the exponent compared to the best $M$-term approximation. It would be interesting to find a constructive proof of Theorem \ref{T1.1}.

 \newpage

\end{document}